\documentclass[11pt]{article}
\pdfoutput=1

\usepackage[T1]{fontenc}
\usepackage[utf8]{inputenc}
\usepackage{lmodern}
\usepackage{amsmath,amssymb,amsthm,mathtools}
\usepackage{graphicx}
\usepackage{tikz}
\usetikzlibrary{arrows.meta,positioning,calc,decorations.pathmorphing,fit,backgrounds}
\usepackage[margin=1in]{geometry}
\usepackage{microtype}
\usepackage[round,authoryear]{natbib}
\usepackage{caption}
\usepackage[colorlinks=true,linkcolor=black,citecolor=black,urlcolor=blue]{hyperref}
\usepackage[capitalise,noabbrev]{cleveref}

\renewcommand{\emph}[1]{#1}

\theoremstyle{plain}
\newtheorem{theorem}{Theorem}
\newtheorem{lemma}{Lemma}
\newtheorem{proposition}{Proposition}
\newtheorem{corollary}{Corollary}

\theoremstyle{definition}
\newtheorem{definition}{Definition}
\theoremstyle{remark}
\newtheorem{remark}{Remark}

\newcommand{\Pt}{P}
\newcommand{\Amat}{A}
\newcommand{\Rt}{R(t)}
\newcommand{\norm}[1]{\left\lVert #1 \right\rVert}
\newcommand{\ncal}{\widetilde{N}}
\newcommand{\real}{\mathbb{R}}
\newcommand{\ket}{\ker \Amat}
\DeclareMathOperator{\rank}{rank}
\DeclareMathOperator{\diag}{diag}
\DeclareMathOperator{\range}{range}
\DeclareMathOperator{\relint}{relint}

\title{Mathematical modelling of a multicluster open system: transfer-operator
recovery from aggregate data under a least-action prior}
\author{Artem P. Nevecheria\thanks{Kuban State University, Krasnodar, 350040, Russia.
\texttt{artiom.nevecherya@gmail.com}. ORCID: 0000-0001-6736-4691.}}
\date{}

\begin{document}
\maketitle

\begin{abstract}
\noindent
Many open systems are observed only through aggregate snapshots, while the rule that moves
objects between states stays hidden. This paper studies the inverse problem of recovering the
transfer operator of a multicluster open system from two adjacent snapshots. The consistent
operators fill an affine manifold: the data fix the aggregate flows and leave the internal
routing free, so every recovery method commits to a prior that selects one operator. Tikhonov
regularisation with a zero reference gives the uniform outflow on each source simplex, which
underestimates retention in an inertial system. We propose least action on the operator
trajectory, under which the rule changes as little as the data permit. In discrete time this
is exact as a probability statement: the action prices a trajectory by its likelihood under a
Gaussian reference drawn toward full retention, so least action selects the most probable
trajectory consistent with the balance. A causal construction keeps the balance exact by
moving the operator only inside the null space of the constraints, and we relate it to
optimal transport, to Perron-Frobenius coarse-graining, and to the Schr\"odinger bridge. Two
instantiations carry it: the Russian labour market and a radiation-damped relativistic
electron ensemble. The construction holds the balance to machine precision where temporal
smoothing corrupts it by twelve percent on the labour series and sixty percent in the
physical one. The spectral gap is non-identifiable from aggregate snapshots but available
from micro-trajectories, where Ulam's method reproduces the Landau-Lifshitz rate to within
six percent. On the short labour series no method beats the naive forecast in point accuracy
at conventional significance levels. The value is structural: an interpretable operator, an
exactly preserved balance, and an explicit information limit of aggregate observation.
\end{abstract}

\noindent\textbf{Keywords:} inverse problems, transfer operator, aggregate data,
non-identifiability, Tikhonov regularization, least action, optimal transport,
Perron-Frobenius operator.

\smallskip
\noindent\textbf{MSC 2020:} 65J22, 47B65, 37M25, 60J22, 49Q22.

\section{Introduction}\label{sec:intro}

An open system that exchanges objects with its environment is often observed only through aggregate snapshots, the head counts of its states at successive instants.
The rule that moves objects between the states stays hidden, and the counts alone leave it under-determined, since the unknown transition probabilities outnumber the balance equations \citep{LeeJudgeZellner1970,BernsteinSheldon2016}.
The same problem recurs across domains.
It appears in flow econometrics, in epidemiology, in consumer-preference analysis, in single-cell biology, in origin-destination traffic, and in beam physics.
Classical instances give it a familiar shape.
One recovers an origin-destination matrix from link counts \citep{VanZuylen1980}, recovers gross labour flows from net ones \citep{BlanchardDiamond1990}, estimates a Markov chain from aggregate proportions \citep{Kalbfleisch1983}, or recovers source-destination traffic intensities from link data in network tomography \citep{Vardi1996}.

The recovery is an ill-posed inverse problem, and regularisation restores a unique solution \citep{Tikhonov1977,Morozov1987,Kabanikhin2012}.
The regularising term selects which solution is returned, so the choice of stabiliser is a prior on the operator.
The standard choices are the minimum-norm solution, the maximum-entropy solution \citep{Jaynes1957}, and the minimum distance to a target matrix \citep{VanZuylen1980}.
The regularisation parameter follows from the discrepancy principle, the L-curve \citep{Hansen1992}, or generalised cross-validation \citep{GolubHeathWahba1979}.

Recent work recovers the hidden dynamics from marginal snapshots by transport and by smoothing.
Optimal transport reconstructs cell-population trajectories from snapshots \citep{Schiebinger2019}, and its regularised unbalanced successors carry the construction to systems with growth \citep{ZhangLiZhou2025}.
Inverse optimal transport estimates a Markov chain from aggregate observations \citep{InverseOT2025}.
Temporal smoothing tracks a time-varying transition matrix by penalising its change between steps \citep{BrandShare2017}.
A parallel line learns transfer operators and their Perron-Frobenius spectra from data \citep{Froyland2014,BiSarrazinSchmitzer2026,IoannouKlusDosReis2025}, and Schrodinger-bridge estimation reads the most likely evolution between two distributions from finite samples \citep{PooladianNilesWeed2024,MaedaYaoNitanda2025}.
Robust estimators recover a transition matrix from ensembles of sample paths under model mismatch \citep{LeskeleDreveton2026}.
These methods fix the prior in advance and edit the data-consistent part of the operator.
The present paper differs there.

The data determine only a manifold of operators, and every point of it reproduces the observed snapshots exactly.
Selecting one operator is selecting a representative of that manifold, and the selection is made by a pair, a reference point and a geometry that measures distance to it.
This paper proposes least action on the operator trajectory as the reference and geometry for inertial systems.
Informally, the system keeps doing what it did, a reading we call behavioural laziness, after which the accompanying code release is named \citep{NevecheryaCode2026}.
The prior asks the rule of evolution to change as little as the data allow.
The construction imposes it exactly on the data-consistent manifold and without using future observations, so it moves only the unidentified part of the operator and holds the observed balance in place.

The contributions follow from this position.
The geometry of the stabiliser becomes an explicit hypothesis about the system, tied to a non-identifiability result that measures the freedom the data leave, since the solution manifold has dimension $2n^2+n$ (\cref{lem:rank,thm:nonident}).
The prior separates two kinds of inertia that earlier smoothing conflates, the inertia of the state and the inertia of the rule, and only the second reaches the operator and carries a signal the naive forecast cannot supply.
A causal null-space construction imposes the prior while preserving the observed balance exactly, so the recovered operator stays balance-exact to machine precision.
The connections to optimal transport and to the Perron-Frobenius coarse-graining of a transfer operator are stated and proved as formal results, and the construction is related to the Schrodinger bridge (\cref{sec:action}).
The least-action name is exact in discrete time: the two penalty terms of the action are, up to an additive constant, the negative logarithm of a Gaussian density on the operator trajectory, so minimisation under the balance constraints picks the most probable trajectory of an inertial reference process drawn toward full retention (\cref{rem:onsager}).
The continuum reading through the Onsager-Machlup action holds at the level of the discretised action, and the same remark records where it stops.
The approach transfers across two instantiations of one model, a Russian labour market and a relativistic electron ensemble.
The same solver runs without modification, and the inertial prior can steer the operator's unobservable component onto the same dissipative attractor that the physical relaxation selects.
Because the reference is chosen, this is a controllability demonstration.
The point-forecast gain from the dynamic prior is small and not significant on the short series available.
The value of the construction is structural, in what the prior fixes about the recovered operator, in the exact preservation of balance, and in the geometry it makes explicit.

\Cref{sec:model} builds the multicluster open system and its balance model, and \cref{sec:vector} casts the recovery as the segmentation problem, a Tikhonov formulation of an under-determined linear system.
\Cref{sec:nonident} proves the non-identifiability and reads the stabiliser as a choice of geometry on the solution manifold.
\Cref{sec:action} develops the least-action prior and its connections to optimal transport, the Schrodinger bridge, and the transfer operator.
\Cref{sec:num} gives the null-space smoothing and the temporal smoothing algorithms.
\Cref{sec:labour,sec:physics} report the two computational examples, and \cref{sec:applic} maps where the prior helps.
\Cref{sec:disc,sec:concl} discuss the limits and conclude.

\section{The multicluster open system}\label{sec:model}

\subsection{Informal setting}
We study an open system whose objects move between a fixed set of macro-states over discrete time.
The states are chosen before any data are seen, and their meaning depends on the application.
In the labour market an object is a worker and a state is an industry.
In the physical instantiation of \cref{sec:physics} an object is an electron and a state is a phase-space cell.
The system is open in the sense that objects enter it from an external reservoir and leave it through a departure channel.
At each instant we observe only the head counts of the states, and the rule that moves objects between them stays hidden.
This section builds the balance model that links two adjacent counts to that rule.

\subsection{States, subsystems and clusters}
The states of the system fall into two groups.
The first group holds the objects whose current state is known.
The second group holds the objects whose current state is unknown, indexed by the last known state where one exists.

\begin{definition}[Classifying states]\label{def:states}
A classifying state is one of $n$ predefined states $S_1^{(i)}$, $i=1,\dots,n$, such that every object occupies at most one classifying state at any instant.
The number $n$ and the meaning of each $S_1^{(i)}$ are fixed by the application.
\end{definition}

\begin{definition}[Subsystems]\label{def:subsystems}
Subsystem~1 consists of the objects that currently occupy a classifying state.
Subsystem~2 consists of the objects that currently occupy no classifying state.
An object of subsystem~2 that once belonged to subsystem~1 is assigned to the segment of the last classifying state $S_1^{(i)}$ it held.
The remaining objects of subsystem~2, those that have held no classifying state since entering the system, form the reserve segment $S_2^{(0)}$.
\end{definition}

A cluster in this work is a state taken together with the objects that occupy it at a given instant, so its population changes over time.
The clusters are predefined macro-states, indexed in advance by the modeller.
No similarity-clustering procedure is applied to the objects to form them.

\begin{definition}[Dynamic cluster]\label{def:cluster}
The dynamic cluster $S_j^{(i)}(t)$ is the state $S_j^{(i)}$ together with the set of objects occupying it at time $t$, where $j\in\{1,2\}$, $i_1=\overline{1,n}$ for the active clusters $S_1^{(i)}(t)$ and $i_2=\overline{0,n}$ for the passive clusters $S_2^{(i)}(t)$.
The active cluster $S_1^{(i)}(t)$ holds the objects of subsystem~1 that are in the classifying state $S_1^{(i)}$ at time $t$.
The passive cluster $S_2^{(i)}(t)$, $i=\overline{1,n}$, holds the objects of subsystem~2 whose last classifying state was $S_1^{(i)}$.
The reserve cluster $S_2^{(0)}(t)$ holds the objects of subsystem~2 that have held no classifying state since entering the system.
Every object belongs to exactly one of these $2n+1$ clusters at each instant.
\end{definition}

\begin{definition}[Multicluster open system]\label{def:mcsystem}
A multicluster open system is the collection of the $2n+1$ dynamic clusters of \cref{def:cluster} together with an external reservoir.
The reservoir supplies an exogenous inflow $\Delta N_2^{(0)}(t)$ over the interval $(t,t+1)$ and receives the objects that leave through a departure channel with destination index $j=n+1$.
\end{definition}

\subsection{Admissible movements}
Within one unit of time $\Delta t=1$ an object changes its cluster at most once.
An object in an active cluster $S_1^{(i)}(t)$ can move at time $t+1$ into a different active cluster $S_1^{(j)}(t+1)$ with $j\neq i$, remain in its current active cluster, pass to the passive cluster $S_2^{(i)}(t+1)$ of the same index, or leave the system through departure.
An object in a passive or reserve cluster $S_2^{(i)}(t)$, $i=\overline{0,n}$, can move at time $t+1$ into any active cluster $S_1^{(j)}(t+1)$, $j=\overline{1,n}$, remain in the same passive cluster $S_2^{(i)}(t+1)$, or leave through departure.
An object entering the system during $(t,t+1)$ appears at time $t+1$ in one of the active clusters $S_1^{(j)}(t+1)$, in the reserve cluster $S_2^{(0)}(t+1)$, or outside the system.
The retention shares $P^{**(i)}$ and $P^{*(i)}$ measure the mass that stays in place, and the reserve cluster $S_2^{(0)}$ keeps its mass through $P^{*(0)}$ like every other passive cluster.
\Cref{fig:scheme} shows these movements together with the departure and inflow channels.

\begin{figure}[t]
\centering
\begin{tikzpicture}[>=Stealth,
  every node/.style={font=\small},
  clus/.style={draw, rounded corners=2pt, fill=black!8,
    minimum width=11mm, minimum height=7.5mm},
  arr/.style={->, semithick},
  block/.style={->, line width=1.4pt},
  darr/.style={->, semithick, dashed}]
  \node[clus] (a1) at (0,3.2)   {$S_1^{(1)}$};
  \node       (ad) at (1.7,3.2) {$\cdots$};
  \node[clus] (an) at (3.4,3.2) {$S_1^{(n)}$};
  \node[clus] (r0) at (-2.4,0)  {$S_2^{(0)}$};
  \node[clus] (p1) at (0,0)     {$S_2^{(1)}$};
  \node       (pd) at (1.7,0)   {$\cdots$};
  \node[clus] (pn) at (3.4,0)   {$S_2^{(n)}$};
  \draw[arr] (a1) to[out=60,in=120,looseness=7] (a1);
  \draw[arr] (an) to[out=60,in=120,looseness=7]
    node[above] (lpp) {$P^{**(i)}$} (an);
  \draw[arr] (r0) to[out=-60,in=-120,looseness=7] (r0);
  \draw[arr] (p1) to[out=-60,in=-120,looseness=7] (p1);
  \draw[arr] (pn) to[out=-60,in=-120,looseness=7]
    node[below] (lps) {$P^{*(i)}$} (pn);
  \coordinate (aw) at (r0 |- a1);
  \begin{scope}[on background layer]
    \node[draw, dashed, rounded corners=4pt, fill=black!3, inner sep=3mm,
      fit=(a1)(an)(lpp)(aw)] (abox) {};
    \node[draw, dashed, rounded corners=4pt, fill=black!3, inner sep=3mm,
      fit=(r0)(pn)(lps)] (pbox) {};
  \end{scope}
  \node[anchor=north west, align=left]
    at ([xshift=2mm,yshift=-1.5mm]abox.north west) {subsystem 1\\(active)};
  \node[anchor=south west, align=left]
    at ([xshift=2mm,yshift=1mm]pbox.south west) {subsystem 2\\(passive, incl.\ reserve)};
  \draw[<->, semithick] (a1) to[bend left=28] node[above]{$P_{11}$} (an);
  \draw[arr] (a1) -- node[left]{$P_{12}$} (p1);
  \draw[arr] (an) -- node[right]{$P_{12}$} (pn);
  \draw[block] (pbox.north -| pd) -- node[fill=white, inner sep=2pt,
    align=center, pos=0.5]
    {$P_{21}$\\[-2pt] {\scriptsize any $S_2^{(i)}\!\to$ any $S_1^{(j)}$}}
    (abox.south -| ad);
  \draw[arr] (abox.east) -- (5.9,2.0);
  \draw[arr] (pbox.east) -- (5.9,1.2);
  \node[align=center, anchor=west] at (6.0,1.6) {departure\\$j=n{+}1$};
  \node[align=center] (src) at (-5.2,1.6) {external\\source};
  \draw[darr] (src) to[bend right=12]
    node[pos=0.35, below left, inner sep=1pt]{$\Delta N_2^{(0)}$} (r0);
  \begin{scope}[on background layer]
    \node[draw, line width=0.7pt, rounded corners=6pt, inner sep=4.5mm,
      fit=(abox)(pbox)] (sys) {};
  \end{scope}
  \matrix at (-5.6,-1.8) [draw=none, column sep=3pt, row sep=1pt,
    column 1/.style={anchor=center}, column 2/.style={anchor=west}]{
    \draw[arr] (0,0) -- (0.7,0); & \node{transition share}; \\
    \draw[darr] (0,0) -- (0.7,0); & \node{exogenous inflow}; \\
  };
\end{tikzpicture}
\caption{Admissible movements in the multicluster open system, drawn for general $n$; the dots stand for the clusters that are not shown.
The dashed boxes group the active clusters $S_1^{(i)}$ into subsystem~1 and the passive clusters $S_2^{(i)}$ together with the reserve $S_2^{(0)}$ into subsystem~2, and the solid border marks the boundary of the open system.
An active cluster sends mass to any other active cluster ($P_{11}$), to the passive cluster of the same index ($P_{12}$), or to departure, and the share $P^{**(i)}$ stays in place.
The block arrow stands for the moves $P_{21}$ from any passive or reserve cluster to any active cluster; each passive or reserve cluster also sends mass to departure, the share $P^{*(i)}$ stays in place, and no movement from the reserve to a passive cluster is admissible.
Every cluster keeps a share in place, shown as a self-loop on each, and the reserve retains through $P^{*(0)}$.
The departure arrows leave the subsystem boxes because every cluster can send objects out of the system to the destination $j=n+1$, and the dashed arrow carries the exogenous inflow $\Delta N_2^{(0)}$ from the external source into the reserve, the only cluster it enters directly; the added mass reaches the active clusters within the step through the reserve outflow $P_{21}^{(0,i)}$.}
\label{fig:scheme}
\end{figure}

\subsection{Indicators and the balance dynamics}
Let $\Omega(t)$ be the set of all objects in the system at time $t$.
Membership of an object $O\in \Omega(t)$ in a cluster is read through the indicator function
\begin{equation}\label{eq:indicator}
I_{S_j^{(i)}(t)}(O)=
\begin{cases}
1, & O\in S_j^{(i)}(t),\\
0, & O\notin S_j^{(i)}(t).
\end{cases}
\end{equation}
The head count of a cluster is the number of objects it holds,
\begin{equation}\label{eq:count}
N_j^{(i)}(t)=\sum_{O\in \Omega(t)} I_{S_j^{(i)}(t)}(O),
\qquad j\in\{1,2\},\ i_1=\overline{1,n},\ i_2=\overline{0,n}.
\end{equation}
The total counts of the two subsystems are
\begin{equation}\label{eq:totals}
N_1(t)=\sum_{i=1}^{n} N_1^{(i)}(t),
\qquad
N_2(t)=\sum_{i=0}^{n} N_2^{(i)}(t),
\end{equation}
and the whole system holds $N_1(t)+N_2(t)$ objects at time $t$.

Each cluster count evolves by a balance of the objects that enter and leave it.
The general form of the model reads
\begin{equation}\label{eq:gen_1_math}
N_j^{(i)}(t+1)=N_j^{(i)}(t)+Q_{\mathrm{in},j}^{(i)}(t)-Q_{\mathrm{out},j}^{(i)}(t),
\qquad j\in\{1,2\},\ i_1=\overline{1,n},\ i_2=\overline{0,n},
\end{equation}
where $Q_{\mathrm{in},j}^{(i)}(t)$ and $Q_{\mathrm{out},j}^{(i)}(t)$ are the inflow to and the outflow from cluster $S_j^{(i)}(t)$ over the interval $(t,t+1)$.
These aggregate flows split into contributions weighted by the transition indicators.
With $P_{11}^{(i,j)}(t)$ the share of an active cluster $i$ that moves to a different active cluster $j$ or to departure $j=n+1$, $P_{12}^{(i)}(t)$ the share of active cluster $i$ that moves to the passive cluster of the same index, and $P_{21}^{(i,j)}(t)$ the share of a passive or reserve cluster $i$ that moves to an active cluster $j$ or to departure, the flows detail as
\begin{align}
Q_{\mathrm{in},1}^{(i)}(t) &=
\sum_{j=1}^{n} N_2^{(j)}(t)\,P_{21}^{(j,i)}(t)
+\bigl[\Delta N_2^{(0)}(t)+N_2^{(0)}(t)\bigr]P_{21}^{(0,i)}(t)
+\sum_{\substack{j=1\\ j\neq i}}^{n} N_1^{(j)}(t)\,P_{11}^{(j,i)}(t),
\label{eq:model_e1}\\
Q_{\mathrm{out},1}^{(i)}(t) &=
N_1^{(i)}(t)\bigl[P_{12}^{(i)}(t)+P_{11}^{(i,n+1)}(t)\bigr]
+\sum_{\substack{j=1\\ j\neq i}}^{n} N_1^{(i)}(t)\,P_{11}^{(i,j)}(t),
\label{eq:model_e2}\\
Q_{\mathrm{in},2}^{(i)}(t) &= N_1^{(i)}(t)\,P_{12}^{(i)}(t),
\label{eq:model_e3}\\
Q_{\mathrm{out},2}^{(i)}(t) &= N_2^{(i)}(t)\sum_{j=1}^{n+1} P_{21}^{(i,j)}(t),
\qquad i=\overline{1,n},
\label{eq:model_e4}\\
Q_{\mathrm{in},2}^{(0)}(t) &= \Delta N_2^{(0)}(t),
\qquad
Q_{\mathrm{out},2}^{(0)}(t) =
\bigl[\Delta N_2^{(0)}(t)+N_2^{(0)}(t)\bigr]\sum_{j=1}^{n+1} P_{21}^{(0,j)}(t).
\label{eq:model_e5}
\end{align}
A transition to the destination index $j=n+1$ is read as an object leaving the system, and both subsystems reach it.
The inflow $\Delta N_2^{(0)}(t)$ is the exogenous parameter of the model, the number of objects added over $(t,t+1)$.
An object added during the interval reaches at time $t+1$ one of the active clusters or the reserve cluster $S_2^{(0)}(t+1)$.
The reserve takes no inflow from another cluster, and \eqref{eq:model_e5} records this: its only inflow is the exogenous $\Delta N_2^{(0)}(t)$.

Summing \eqref{eq:gen_1_math} over all clusters and using \eqref{eq:model_e1}--\eqref{eq:model_e5} gives the conservation law
\begin{equation}\label{eq:balance}
N_1(t)+N_2(t)+\Delta N_2^{(0)}(t)-N^{(n+1)}(t)=N_1(t+1)+N_2(t+1),
\end{equation}
where $N^{(n+1)}(t)$ is the number of objects that leave the system over $(t,t+1)$,
\begin{equation}\label{eq:departures}
N^{(n+1)}(t)=
\sum_{i=1}^{n} N_1^{(i)}(t)\,P_{11}^{(i,n+1)}(t)
+\sum_{i=1}^{n} N_2^{(i)}(t)\,P_{21}^{(i,n+1)}(t)
+\bigl[\Delta N_2^{(0)}(t)+N_2^{(0)}(t)\bigr]P_{21}^{(0,n+1)}(t).
\end{equation}
The count at time $t+1$ equals the count at time $t$ raised by the net exchange $\Delta N_2^{(0)}(t)-N^{(n+1)}(t)$ with the reservoir.
The model conserves the number of objects once the inflow and the departures are accounted for.

\subsection{Transition indicators}
The unknown of the model is the vector $\Pt(t)$ that collects all transition indicators.

\begin{definition}[Transition indicators]\label{def:indicators}
The transition indicator $P_{j_1 j_2}^{(i_1,i_2)}(t)$ is the probability that an object in cluster $S_{j_1}^{(i_1)}(t)$ at time $t$ occupies cluster $S_{j_2}^{(i_2)}(t+1)$ at time $t+1$.
The vector $\Pt(t)$ has the following components.
The block $P_{11}^{(i,j)}$, $i=\overline{1,n}$, $j\in\{1,\dots,n,n+1\}\setminus\{i\}$, holds the moves from an active cluster to a different active cluster or to departure, $n^2$ components in all.
The block $P_{12}^{(i)}$, $i=\overline{1,n}$, holds the moves from an active cluster to the passive cluster of the same index, $n$ components.
The block $P_{21}^{(i,j)}$, $i=\overline{0,n}$, $j\in\{1,\dots,n,n+1\}$, holds the moves from a passive or reserve cluster to an active cluster or to departure, $(n+1)^2$ components.
The retention shares $P^{*(i)}$, $i=\overline{0,n}$, hold the mass of each passive or reserve cluster that stays in place, $n+1$ components.
The retention shares $P^{**(i)}$, $i=\overline{1,n}$, hold the mass of each active cluster that stays in place, $n$ components.
The canonical order of the blocks is $P_{11}\to P_{12}\to P_{21}\to P^{*}\to P^{**}$, giving $m=2n^2+5n+2$ components.
\end{definition}

The transition indicators are shares of a source cluster, so they are nonnegative,
\begin{equation}\label{eq:model_nonneg}
P_{11}^{(i,j)}(t)\ge 0,\quad P_{12}^{(i)}(t)\ge 0,\quad P_{21}^{(i,j)}(t)\ge 0,\quad P^{*(i)}(t)\ge 0,\quad P^{**(i)}(t)\ge 0,
\end{equation}
and the outgoing mass of each source, closed by its retention share, sums to one,
\begin{align}
P_{12}^{(i)}(t)+\sum_{\substack{j=1\\ j\neq i}}^{n+1} P_{11}^{(i,j)}(t)+P^{**(i)}(t) &= 1,
\qquad i=\overline{1,n},
\label{eq:model_norm1}\\
\sum_{j=1}^{n+1} P_{21}^{(i,j)}(t)+P^{*(i)}(t) &= 1,
\qquad i=\overline{0,n}.
\label{eq:model_norm2}
\end{align}
The retention shares $P^{*}$ and $P^{**}$ turn the outgoing budget of each of the $2n+1$ sources into an equality, so \eqref{eq:model_norm1}--\eqref{eq:model_norm2} contribute $2n+1$ normalisation constraints.
Together with the $2n+1$ balance equations \eqref{eq:gen_1_math}, these constraints are assembled into the linear system $\Amat(t)\,\Pt(t)=\ncal(t,t+1)$ of \cref{sec:vector}.
The coefficient matrix $\Amat(t)$ has full row rank, so the system is consistent and severely under-determined, as shown in \cref{sec:nonident}.
The data fix the aggregate flows and leave the internal routing free.

\section{Vector form and the segmentation problem}\label{sec:vector}

The balance dynamics of \cref{sec:model} are linear in the transition indicators.
Collecting the indicators of one step into a single vector turns the recovery into a linear
system, which is the form we solve and analyse for the rest of the paper.

Fix two adjacent snapshots at times $t$ and $t+1$.
The observation vector $\ncal(t,t+1)\in\real^{p}$ stacks the increments of the cluster
counts over the interval, corrected in the reservoir row for the exogenous inflow
$\Delta N_2^{(0)}(t)$, together with a block of ones for the normalisations.
The unknown is the indicator vector $\Pt(t)\in\real^{m}$, written in the canonical order
$P_{11}\to P_{12}\to P_{21}\to P^{*}\to P^{**}$.
The balance and normalisation laws of \cref{sec:model} then read
\begin{equation}\label{eq:inverse}
  \Amat(t)\,\Pt(t)=\ncal(t,t+1),\qquad \Pt(t)\ge 0,
\end{equation}
with $\Amat(t)\in\real^{p\times m}$, $p=4n+2$, $m=2n^{2}+5n+2$.
The first $2n+1$ rows form the balance block $B(t)$.
These are the balance equations written in increments of the counts over $(t,t+1)$, so only
the transition components $P_{11},P_{12},P_{21}$ enter them, and they contribute
$m'=2n^{2}+3n+1$ columns.
The retention shares cancel in the increments, hence the $P^{*}$ and $P^{**}$ columns of the
balance block are zero.
Departure carries no balance equation of its own, so each departure column has a single
nonzero entry in the balance block, and these entries are what give $B(t)$ full row rank
(\cref{sec:nonident}).
The remaining $2n+1$ rows are the normalisations of the outgoing probabilities of each
source, closed to unity by the variables $P^{*},P^{**}$, which enter only these rows and with
coefficient one.
The full block form of $\Amat(t)$ is recorded in \cref{sec:appB}.

The informative content sits in the $2n+1$ balance equations acting on the $m'$ transition
unknowns.
For $n=20$ this is $41$ equations in $861$ unknowns.
The system is severely under-determined, and \eqref{eq:inverse} has to be treated as a
regularised inverse problem.

\begin{definition}[segmentation problem]\label{def:segmentation}
Given two adjacent snapshots, the segmentation problem is to find a vector $\Pt(t)$
that satisfies the linear system \eqref{eq:inverse} together with the normalisation and
non-negativity constraints of \cref{sec:model}.
The name follows the labour-market model in which the aggregate counts are segmented into the
underlying inter-cluster transition indicators.
\end{definition}

A solution of the segmentation problem feeds three uses.
For analysis it yields the flows between clusters, both their volumes and their densities
relative to the source or the target cluster, which describe the internal movement that the
aggregate counts hide.
For forecasting it supplies a time series of operators whose components carry their own
trends; extrapolating those trends and applying the recovered operator returns forecasts of
the cluster counts one step ahead, an approach that uses more structure than extrapolating
the counts directly.
For control it exposes the operator to intervention through the control vector $\Rt$ of
\cref{sec:action}, which encodes a known external forcing and supports what-if analysis.

The segmentation problem is ill-posed, since \eqref{eq:inverse} admits a whole set of
non-negative solutions (\cref{sec:nonident}).
A single solution is selected by minimising the Tikhonov functional
\begin{equation}\label{eq:tikh}
  F_\alpha(x)=\norm{\Amat x-\ncal}^{2}+\alpha\,\norm{x-\bar x}^{2},\qquad x\ge 0,
\end{equation}
by projected steepest descent, where $x\equiv\Pt$ throughout.
The reference vector $\bar x$ carries the prior.
The choice $\bar x=0$ returns the minimum-norm solution.
The choice $\bar x=P_0$ returns the inertial reference, where $P_0$ is the full-retention
vector with $P^{*}=P^{**}=1$ and zeros elsewhere, the vector of the identity operator $I$
(the subscript $0$ labels the reference and is not a time index).
The subject domain enters \eqref{eq:inverse} only through the observations.
The cluster counts $N(t)$ and the inflow $\Delta N_2^{(0)}(t)$ set the coefficients of the
balance rows of $\Amat(t)$, and the increments $N(t+1)-N(t)$ set the right-hand side
$\ncal(t,t+1)$.
Everything else in \eqref{eq:inverse} is domain-independent, so the labour market and a beam
phase space are two instances of one model.

\section{Non-identifiability and the geometry of the stabiliser}\label{sec:nonident}

Set the constraint $P\ge0$ aside for a moment.
The solutions of \cref{eq:inverse} form the affine solution manifold
\begin{equation}\label{eq:manifold}
  M=\{x_0+v:\ v\in\ker A\},
\end{equation}
where $x_0$ is any particular solution and $v$ ranges over the null space $\ker A$.
Its dimension is $\dim\ker A=m-p=2n^2+n$ by \cref{lem:rank}.
The non-negative solutions are the intersection of $M$ with the cone $x\ge0$, a bounded polytope described below.
A move along $\ker A$ redraws the internal routing of the flows, the operator's micro-structure, and leaves the observed balance $Ax$ fixed.

In the formal statements we write $A\equiv A(t)$, $x\equiv P(t)$ and $b\equiv\ncal(t,t+1)$, and we suppress the argument $t$ wherever a single instant is fixed.
The matrix $A\in\real^{p\times m}$ collects the balance and normalisation equations, with $p=4n+2$ and $m=2n^2+5n+2$.
Throughout this section and the corresponding proofs we assume that all cluster populations at time $t$ are positive and that the exogenous inflow satisfies $\Delta N_2^{(0)}(t)\ge0$.
The mass that the reserve routes over the step is then the positive effective reserve mass $N^{(0)}_{\mathrm{eff}}(t)=\Delta N_2^{(0)}(t)+N_2^{(0)}(t)$ of \cref{eq:model_e5}, and wherever a source mass enters a formula below, the reserve enters through $N^{(0)}_{\mathrm{eff}}(t)$.

Three examples show what the freedom in $\ker A$ costs.
Take two clusters with the aggregate snapshot $(100,100)\to(100,100)$.
The family
$Q=\left[\begin{smallmatrix}1-s&s\\ s&1-s\end{smallmatrix}\right]$
reproduces this observation for every retention $1-s$ in $[0,1]$.
The minimum-norm solution picks $s=0.5$, a retention of $0.5$ and the fully mixed operator
$Q=\left[\begin{smallmatrix}0.5&0.5\\ 0.5&0.5\end{smallmatrix}\right]$,
while the inertial reference drives $s\to0$, a retention approaching $1$ and $Q\to I$.
This two-state $Q$ is the reduced picture of the retention freedom, which in the full model lives in the $P^{*},P^{**}$ shares and the $\ker A$ directions of \cref{lem:kernel}.
The same freedom is the classical non-identifiability of gross flows inside net flows \citep{BlanchardDiamond1990,Kalbfleisch1983} and of a trip matrix inside its margins \citep{VanZuylen1980}.
On the labour panel the recovered retention rises from $0.08$ to about $1.0$ at zero increment residual, so the point micro-structure is one arbitrary representative of the manifold (\cref{fig:examples}).

\begin{figure}[t]\centering
  \begin{tikzpicture}[>=Stealth, font=\small]
    \fill[black!6] (0,0) rectangle (5,4);
    \draw[->,thick] (0,0) -- (5.4,0);
    \draw[->,thick] (0,0) -- (0,4.3);
    \node[anchor=north east] at (5.4,-0.05) {cone $x\ge0$};
    \coordinate (P1) at (0.8,3.9);
    \coordinate (P2) at (4.4,0.55);
    \draw[thick] ($(P1)!-0.33!(P2)$) -- ($(P2)!-0.33!(P1)$);
    \node[anchor=south east] at (P1) {$M$};
    \draw[line width=1.4pt] (P1) -- (P2);
    \node[anchor=south east] at (1.55,3.2) {$M_0$};
    \coordinate (mn) at (2.32,2.49);
    \draw[dashed] (0,0) -- (mn);
    \fill (mn) circle (2pt);
    \node[anchor=south west] at (mn) {minimum norm};
    \coordinate (la) at (3.7,1.20);
    \fill (la) circle (2pt);
    \node[anchor=west] at (la) {least action};
  \end{tikzpicture}
  \caption{Non-identifiability in geometric form.
  The observation fixes the margins, so the consistent operators fill the segment $M_0$ where the manifold $M$ meets the non-negative cone.
  The minimum-norm reference selects the point nearest the origin, and least action selects the high-retention end.}
  \label{fig:examples}
\end{figure}

\begin{lemma}[full row rank]\label{lem:rank}
Let the populations of all clusters at time $t$ be positive.
Then $A$ has full row rank $p=4n+2$, the system $Ax=b$ is consistent, and its solution set
$M=\{x\in\real^m:\ Ax=b\}$ is an affine subspace of dimension $m-p=2n^2+n$.
\end{lemma}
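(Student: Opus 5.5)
The plan is to exploit the block structure of $A$. Order the rows as the balance block $B$ (the first $2n+1$ rows) followed by the normalisation block $C$ (the last $2n+1$ rows), and order the columns as the transition unknowns $(P_{11},P_{12},P_{21})$ followed by the retention shares $(P^{*},P^{**})$. As recorded in \cref{sec:vector}, the retention columns vanish in $B$. In $C$ each retention variable appears in exactly one row, that of its own source, with coefficient one. Hence
\[
A=\begin{pmatrix} B' & 0\\ C' & I_{2n+1}\end{pmatrix},
\]
where $B'\in\real^{(2n+1)\times m'}$ is the balance block restricted to the transition columns. Because the lower-right block is invertible, block elimination shows that $\rank A=(2n+1)+\rank B'$. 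So it suffices to prove that $B'$ has full row rank $2n+1$.

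To prove this, I will exhibit $2n+1$ columns of $B'$ that form a nonsingular (triangular) submatrix. The natural candidates are the departure columns $P_{11}^{(i,n+1)}$ for $i=\overline{1,n}$ and $P_{21}^{(i,n+1)}$ for $i=\overline{0,n}$. Together these give $n+(n+1)=2n+1$ columns, one per source cluster. By \eqref{eq:model_e2}, \eqref{eq:model_e4} and \eqref{eq:model_e5}, the departure column of source $S_j^{(i)}$ enters only that source's own balance row, as an outflow. Its coefficient there is $-N_1^{(i)}(t)$, $-N_2^{(i)}(t)$, or $-N^{(0)}_{\mathrm{eff}}(t)$ respectively, because departure has no balance row of its own. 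The resulting $(2n+1)\times(2n+1)$ submatrix is therefore diagonal with nonzero diagonal entries, using the positivity hypothesis (with $\Delta N_2^{(0)}\ge0$ for the reserve). So $\rank B'=2n+1$ and $\rank A=4n+2=p$.

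The remaining claims follow at once. Full row rank means $A:\real^m\to\real^p$ is surjective, so $Ax=b$ is consistent for every right-hand side $b$, in particular for $b=\ncal(t,t+1)$. By rank–nullity, $\dim\ker A=m-p=2n^2+5n+2-(4n+2)=2n^2+n$. Hence $M=x_0+\ker A$ is an affine subspace of that dimension.

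I expect the main obstacle to be bookkeeping rather than mathematics. The increment form of the balance rows must be checked so that the departure entries really are isolated. One point to verify is that the increment (rather than level) formulation does not reintroduce retention columns or produce cross-row entries for the departure variables. A second point is the reserve row, whose outflow is weighted by $N^{(0)}_{\mathrm{eff}}$ rather than by $N_2^{(0)}$. I would settle both by reading them off \eqref{eq:model_e1}--\eqref{eq:model_e5} column by column, which the appendix block form of \cref{sec:appB} records.
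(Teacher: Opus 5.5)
Your proof is correct and is essentially the paper's argument. The paper takes the $2n+1$ departure columns together with the $2n+1$ closure columns as a single $p\times p$ submatrix $\begin{pmatrix}V&0\\W&I\end{pmatrix}$, with $V$ the diagonal of negative source masses; this is your block reduction $\rank A=(2n+1)+\rank B'$ combined with your diagonal departure submatrix of $B'$, and the consistency and dimension count then follow as you state (the paper cites Kronecker--Capelli where you use surjectivity).
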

The proof forms a square submatrix from the departure and closure columns and reads its rank off a block triangle with invertible diagonal blocks (\cref{sec:appA}).

The normalisation rows carry the structure behind the next bound.
Rows $i=2n+2,\dots,4n+2$ have entries in $\{0,1\}$ whose supports partition the whole column set.
Every column, including the closures $P^*,P^{**}$ and the columns of transitions from the reserve, enters exactly one normalisation.

\begin{lemma}[the cone]\label{lem:cone}
The null space meets the non-negative cone only at the origin, $\ker A\cap\real^m_+=\{0\}$.
More precisely,
\begin{equation}\label{eq:cone}
  \norm{Ax}_2\ \ge\ \norm{x}_2/\sqrt p\qquad(x\ge0).
\end{equation}
\end{lemma}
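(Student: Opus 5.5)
We use only the normalisation block of $A$. As noted just before the statement, the rows $i=2n+2,\dots,4n+2$ are $0/1$ vectors whose supports partition the column set $\{1,\dots,m\}$. Every column, including the closures $P^*,P^{**}$ and the reserve transitions, lies in exactly one support. Call these $q=2n+1$ supports $J_1,\dots,J_q$.

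For $x\ge0$, the normalisation row $r_k$ gives $(Ax)_{k}=\sum_{j\in J_k}x_j$. Since $\norm{Ax}_2^2$ is a sum of squares over all $p$ rows, discarding the balance rows can only decrease it:
\begin{equation*}
\norm{Ax}_2^2\ \ge\ \sum_{k=1}^{q}\Bigl(\sum_{j\in J_k}x_j\Bigr)^2 .
\end{equation*}

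The key elementary step uses non-negativity. For nonnegative reals, $\bigl(\sum_{j\in J_k}x_j\bigr)^2\ge\sum_{j\in J_k}x_j^2$, because the cross terms $2x_ix_j$ are nonnegative. Summing over $k$ and using that the $J_k$ partition the columns gives
\begin{equation*}
\norm{Ax}_2^2\ \ge\ \sum_{j=1}^m x_j^2=\norm{x}_2^2 .
\end{equation*}
This is already stronger than \eqref{eq:cone}: $\norm{Ax}_2\ge\norm{x}_2\ge\norm{x}_2/\sqrt p$ since $p\ge1$.

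If a $\sqrt p$-type constant must appear naturally, a different route yields it. Use $\norm{Ax}_2\ge\norm{Ax}_\infty\ge\max_k\sum_{J_k}x_j$, together with $\norm{x}_2\le\norm{x}_1=\sum_k\sum_{J_k}x_j\le q\max_k(\cdot)$. Either way \eqref{eq:cone} follows. I would present the sharper sum-of-squares argument and remark that the stated constant is conservative.

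The trivial-intersection claim is then immediate. If $x\in\ker A\cap\real^m_+$, then $0=\norm{Ax}_2\ge\norm{x}_2/\sqrt p$, so $x=0$.

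The only point needing care is verifying the partition property from the block form of $A$ in \cref{sec:appB}. Each column is a transition share or retention share of exactly one source cluster, and \eqref{eq:model_norm1}--\eqref{eq:model_norm2} sum precisely those shares with coefficient one. One must also check that the reserve's columns $P_{21}^{(0,j)}$ appear in the normalisation with coefficient $1$, not $N^{(0)}_{\mathrm{eff}}$. They do, since $N^{(0)}_{\mathrm{eff}}$ scales only the balance rows. I expect this bookkeeping to be the main obstacle, and it is mild.
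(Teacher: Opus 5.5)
Your main argument is correct. It rests on the same structural fact as the paper's proof: the $2n+1$ normalisation rows are $0/1$ vectors whose supports partition all $m$ columns, so for $x\ge0$ there is no cancellation and each $x_j$ is counted exactly once. The difference is in how the norms are compared.

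The paper goes through $\ell_1$. It shows $\norm{Ax}_1\ge\norm{x}_1$, which is the natural mass reading $\sum_k s_k=\norm{x}_1$ with $s_k=\sum_{j\in J_k}x_j$. It then converts to $\ell_2$ with $\norm{Ax}_2\ge\norm{Ax}_1/\sqrt p$ and $\norm{x}_1\ge\norm{x}_2$, and the first of these conversions is where the factor $\sqrt p$ is lost.

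You stay in $\ell_2$ and use $s_k^2\ge\sum_{j\in J_k}x_j^2$ block by block. This gives $\norm{Ax}_2\ge\norm{x}_2$ on the cone, with a constant that does not depend on $n$. The paper's chain is a one-line use of standard norm equivalences, but its constant is weaker, which is consistent with the paper's own remark that $1/\sqrt p$ is conservative. Your sharper bound would also lower the coercivity threshold in the proof of \cref{thm:nonident} from $\sqrt p\,\norm{b}$ to $\norm{b}$. Your check that the reserve columns $P_{21}^{(0,j)}$ carry coefficient $1$ in the normalisation rows, and not $N^{(0)}_{\mathrm{eff}}$, is also right.

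Your side remark, however, is wrong: the $\ell_\infty$ route does not yield \eqref{eq:cone}. It gives $\norm{Ax}_2\ge\norm{x}_2/q$ with $q=2n+1$. Since $q^2=4n^2+4n+1>4n+2=p$ for every $n\ge1$, the constant $1/q$ is strictly weaker than $1/\sqrt p$, so ``either way'' is false.

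If you want a square-root constant without the sum-of-squares trick, apply Cauchy--Schwarz to the $q$ block sums: $\bigl(\sum_k s_k^2\bigr)^{1/2}\ge q^{-1/2}\sum_k s_k$. That gives $1/\sqrt q\ge1/\sqrt p$, and it is essentially the paper's argument restricted to the normalisation rows. Either drop the remark or replace it with this; the sum-of-squares proof stands on its own.
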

The proof opens the moduli of the normalisation rows without cancellation on the cone and compares the $\ell_1$ and $\ell_2$ norms (\cref{sec:appA}).
The constant $1/\sqrt p$ is a conservative bound on the cone, and the conditioning of the identifiable part is governed by the smallest non-zero singular value $\sigma_{\min}^+$ of $A$.

The admissible transitions form a bipartite graph on two disjoint vertex sets.
One side holds the $2n+1$ clusters as sources at time $t$, the other side holds the $2n+1$ clusters as targets at time $t+1$ together with departure, $4n+3$ vertices in all.
Each component of $P$ is one edge from a source to a target: $P_{11}$ joins the active sources to the other active targets and to departure, $P_{12}$ joins each active source to its passive target, $P_{21}$ joins the passive and reserve sources to the active targets and to departure, and the closures $P^*,P^{**}$ are the retention edges from the source copy of each cluster to its own target copy.
The graph is connected, since every source has an edge to the common sink departure.

\begin{lemma}[kernel structure]\label{lem:kernel}
For the fully admissible transition graph, $\ker A$ is spanned by balanced $4$-cycles.
The mass shift $z_{i\to j}+z_{k\to l}-z_{i\to l}-z_{k\to j}$, which raises the flows $i\to j$ and $k\to l$ and lowers $i\to l$ and $k\to j$ by a common amount, leaves every marginal fixed and so lies in $\ker A$, and a fundamental family of $m-p$ such cycles is a basis.
The same conclusion holds for any admissible subgraph in which every source has an edge to departure and every target has at least one incoming edge, with $m$ read as the number of admissible edges.
The identifiable quotient $\real^m/\ker A$ is canonically isomorphic to the row space of $A$, the space of marginals: the data fix the aggregate in- and out-flows of the clusters and leave the internal routing free.
\end{lemma}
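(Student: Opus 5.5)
The plan is to read $A$ as the (weighted) incidence matrix of the bipartite transition graph and reduce everything to the classical cycle-space description of flows on a bipartite graph.

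\textbf{Step 1: rescaling to an unweighted incidence matrix.} Each edge $e=(s\to r)$ carries the unknown $x_e$. It enters two kinds of rows:
\begin{itemize}
\item the normalisation row of its source $s$, with coefficient $1$;
\item the balance row of its target $r$ (and of its source, when the source is itself a tracked cluster), with coefficient equal to the source mass $N_s$, or $N^{(0)}_{\mathrm{eff}}$ for the reserve. The retention edges cancel in the increments.
\end{itemize}
I will rewrite the system in the flow variables $f_e=N_s x_e$. This is an invertible diagonal change of variables, since all masses are positive by the standing assumption, so $\ker A$ is carried isomorphically onto $\ker A'$, where $A'$ has rows ``total outflow of source $s$'' and ``inflow minus outflow of target $r$''. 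Adding to each balance row the matching normalisation row, scaled by $N_r$, turns the balance rows into ``total inflow into target $r$'', which is legitimate because row operations preserve the kernel. Departure carries no row. The transformed matrix is therefore the vertex--edge incidence matrix of the bipartite graph $G$ with source side $\{s\}$ and target side $\{r\}$, with the departure vertex deleted.

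\textbf{Step 2: balanced 4-cycles lie in the kernel.} Each balanced 4-cycle $z_{i\to j}+z_{k\to l}-z_{i\to l}-z_{k\to j}$ changes each source's out-sum and each target's in-sum by $+1-1=0$. It therefore lies in $\ker A'$, and after undoing the scaling it lies in $\ker A$.

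\textbf{Step 3: spanning.} The kernel of the full incidence matrix of a connected bipartite graph is its cycle space, of dimension $|E|-|V|+1$. Deleting the departure row raises the rank deficiency by nothing: the rows of the full incidence matrix sum to $2\mathbf 1$ over edges, which is not a dependency in a signed sense. So I will argue this directly.
\begin{itemize}
\item Choose a spanning tree $T$ of $G$ rooted at departure.
\item The tree-edge flows are then determined recursively from the leaves by the remaining vertex sums. This gives injectivity of $A'$ restricted to the tree-edge coordinates.
\item Hence $\dim\ker A' = m-(4n+2)=m-p$, consistent with \cref{lem:rank}.
\item Each non-tree edge closes a unique even cycle with alternating signs, and these $m-p$ fundamental cycles are independent.
\item Every even alternating cycle is a sum of alternating 4-cycles whenever the chords exist. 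In the fully admissible graph, consecutive vertices $i,j,k,l$ admit the chord $i\to l$, so a $2q$-cycle splits into a 4-cycle and a $(2q-2)$-cycle.
\end{itemize}
For a general admissible subgraph in which chords may be missing, the weaker reading of ``balanced cycle'' (an even alternating cycle) is what survives. I expect this is the intended content there, with $m$ counting the admissible edges. The hypotheses, that every source reaches departure and every target has an incoming edge, guarantee that $G$ is connected and that every row is nonzero. These are exactly what the tree argument needs.

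\textbf{Step 4: the quotient.} Since $\ker A$ is the orthogonal complement of the row space, the quotient map $x+\ker A\mapsto$ the projection of $x$ onto $\range A^\top$ is a canonical isomorphism $\real^m/\ker A\cong\range A^\top$. Equivalently, by full row rank, the map $x\mapsto Ax$ identifies the quotient with $\real^p$, the vector of marginals.

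\textbf{Main obstacle.} The hardest step will be the precise bookkeeping of the Step 1 row reduction, in particular the reserve and the retention columns. I will check that the balance rows for passive clusters and for the reserve (which takes only exogenous inflow) indeed become pure inflow rows on the target side. I will also check that the retention edges then appear in exactly one source row and one target row.
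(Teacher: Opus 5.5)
There is a genuine gap in Step 3. The passage from the cycle space to balanced $4$-cycles does not work as written. Your Steps 1, 2 and 4 agree with the paper, and so does the leaf-peeling count $\dim\ker A=m-p$.

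The first problem is your chord claim for the fully admissible graph, which is false as stated. That graph is not complete bipartite. A passive target $S_2^{(i)}(t+1)$ is fed only by active source $i$ and passive source $i$, and the reserve target only by the reserve source. Consider the consecutive quadruple reserve source $\to$ active target $1$ $\leftarrow$ active source $1$ $\to$ passive target $1$. It lies on the $6$-cycle closed through passive source $1$ and departure, yet the chord reserve $\to$ passive target $1$ is inadmissible.

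The splitting can be rescued. Take $l$ to be an active target or departure, since every source feeds those. Such an $l$ always exists, because two consecutive targets on a cycle cannot both be passive: their source neighbourhoods are disjoint. You do not make this argument, though. Even with it, the chord decomposition only shows that $4$-cycles span $\ker A$. It does not give the stated fundamental family of $m-p$ $4$-cycles as a basis.

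Separately, your remark that the rows sum to $2\mathbf{1}$ is off. The relevant dependency is sources minus targets, which has a nonzero coefficient on the departure row. This is harmless only because your tree argument replaces it.

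The second and more serious problem is the subgraph clause. You leave it unproved and propose weakening it to even alternating cycles. The clause is true as stated, and the hypothesis that every source has an edge to departure is exactly what makes it true.

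The missing idea is the choice of spanning tree. Take all source-to-departure edges together with one incoming edge $s_j\to j$ into each non-departure target $j$, which exists by the second hypothesis. This is a spanning tree with $4n+2=p$ edges. Every non-tree edge has the form $s\to j$ with $j\neq\mathrm{dep}$ and $s\neq s_j$. Its fundamental cycle is $z_{s\to j}-z_{s_j\to j}+z_{s_j\to\mathrm{dep}}-z_{s\to\mathrm{dep}}$, a balanced $4$-cycle through departure.

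So the $m-p$ fundamental cycles are themselves $4$-cycles and form a basis. This holds for the full graph and for every subgraph with the two properties at once, with no chord analysis. This is the paper's route, and with this tree your leaf-peeling step and the rest of your argument go through unchanged.
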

The proof passes to the flow coordinates $z^{(i,j)}=N^{(i)}(t)\,P^{(i,j)}$, identifies $\ker A$ with the cycle space of the source-to-target bipartite graph, and reads a basis off the spanning tree centred at departure (\cref{sec:appA}).
\Cref{fig:cycle} shows one balanced $4$-cycle, the elementary move that shifts mass while every marginal stays fixed.

\begin{figure}[t]\centering
  \begin{tikzpicture}[>=Stealth, font=\small,
    nd/.style={draw, circle, inner sep=1.5pt, minimum size=6mm}]
    \node[nd] (i) at (0,1.5) {$i$};
    \node[nd] (k) at (0,0)   {$k$};
    \node[nd] (j) at (3.2,1.5) {$j$};
    \node[nd] (l) at (3.2,0)   {$l$};
    \draw[->,thick] (i) -- node[above]{$+$} (j);
    \draw[->,thick] (k) -- node[below]{$+$} (l);
    \draw[->,thick] (i) -- node[pos=0.82,above right]{$-$} (l);
    \draw[->,thick] (k) -- node[pos=0.82,below right]{$-$} (j);
    \node[anchor=east] at (-0.5,0.75) {sources};
    \node[anchor=west] at (3.7,0.75) {targets};
  \end{tikzpicture}
  \caption{A balanced $4$-cycle in $\ker A$.
  Raising the flows $i\to j$ and $k\to l$ and lowering $i\to l$ and $k\to j$ by a common amount leaves every marginal unchanged.}
  \label{fig:cycle}
\end{figure}

Non-identifiability of this kind is standard for Markov chains estimated from aggregate data \citep{Kalbfleisch1983,Kalbfleisch1984}.
Marginal data recover the equilibrium occupancies and the mean flows and leave the individual transition probabilities free.

\begin{corollary}[identifiable bridge coordinate]\label{cor:forest}
If an edge $e$ of the admissible transition graph lies on no balanced $4$-cycle, then the coordinate $x_e$ is constant on $M$ and fixed by the marginals, so $P$ is identifiable on $e$.
Under the hypotheses of \cref{lem:kernel} this happens exactly when $e$ is a bridge of the source-to-target bipartite graph.
In the full model the unique bridge is the retention edge of the reserve, and its identifiable coordinate has the closed form
\begin{equation}\label{eq:bridge-reserve}
  P^{*(0)}(t)=\frac{N_2^{(0)}(t+1)}{\Delta N_2^{(0)}(t)+N_2^{(0)}(t)}.
\end{equation}
\end{corollary}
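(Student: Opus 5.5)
The corollary has three claims, and I would prove them in order, all from \cref{lem:kernel}. Throughout I work in the flow coordinates $z^{(i,j)}=N^{(i)}(t)\,P^{(i,j)}$, with the reserve source mass read as $N^{(0)}_{\mathrm{eff}}(t)$. The rescaling is diagonal with positive entries, so a coordinate of $x$ is constant on $M$ exactly when the corresponding flow coordinate is.

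\textbf{Constancy off the cycles.} \Cref{lem:kernel} says $\ker A$ is spanned by balanced $4$-cycles. If $e$ lies on no such cycle, every spanning vector has a zero $e$-component, so $v_e=0$ for all $v\in\ker A$. Hence $x_e=(x_0+v)_e=(x_0)_e$ is constant on $M=\{x_0+v\}$. Since $\real^m/\ker A$ is identified with the row space, the linear functional $x\mapsto x_e$ vanishes on $\ker A$ and therefore lies in the row space of $A$. That is, $x_e$ is a fixed combination of the marginals, which is what "fixed by the marginals" means.

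\textbf{Characterisation as a bridge.} I would use the standard fact that $\ker A$ is the cycle space of the bipartite graph, again by \cref{lem:kernel}. The claim is that an edge lies on some cycle of the cycle space exactly when it is not a bridge. The step needing care is passing from "lies on no balanced $4$-cycle" to "lies on no cycle at all", since a non-bridge edge may only lie on longer cycles. I would handle this through the spanning statement: if $e$ is not a bridge, it lies on some cycle $C$, and the indicator of $C$ is a nonzero kernel vector with a nonzero $e$-component. This vector is a combination of $4$-cycles, so some $4$-cycle in the spanning family has a nonzero $e$-entry and therefore contains $e$. Conversely, a bridge lies on no cycle, so it lies on no $4$-cycle.

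\textbf{The reserve retention edge.} I would show it is the unique bridge as follows. The reserve target at time $t+1$ has exactly one incoming edge, the retention edge from the reserve source, because no cluster feeds the reserve. So that target vertex is a leaf, and its edge is a bridge.

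Every other edge lies on a cycle. For an edge $s\to t'$ with $t'$ different from departure, I would close it through departure. The target $t'$ has a second incoming edge $s'\to t'$, and both $s$ and $s'$ have edges to departure, so $s\to t'\leftarrow s'\to\text{dep}\leftarrow s$ is a cycle. Checking that $t'$ always has a second source when $n\ge1$ is the main obstacle:
\begin{itemize}
\item an active target receives from all passive and reserve sources;
\item a passive target $i$ receives only from active $i$ and its own retention edge, and that still gives two sources.
\end{itemize}
Departure edges close similarly: $s\to\text{dep}$ together with $s$'s retention edge, some other source reaching the same target, and that source's departure edge form a cycle.

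\textbf{Closed form for $P^{*(0)}$.} The balance row of the reserve target involves only the retention flow, so
\begin{equation*}
N_2^{(0)}(t+1)=N^{(0)}_{\mathrm{eff}}(t)\,P^{*(0)}(t).
\end{equation*}
Dividing by $N^{(0)}_{\mathrm{eff}}(t)>0$ gives \eqref{eq:bridge-reserve}.
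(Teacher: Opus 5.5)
Your route is the paper's: the annihilator argument for constancy, expansion of a cycle vector over the spanning $4$-cycles for the bridge characterisation, the leaf argument at the reserve target, and explicit $4$-cycles through departure for every other edge.

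\textbf{The gap is in your departure-edge case.} For a departure edge $s\to\mathrm{dep}$ you close the cycle through the retention edge of $s$ and ``some other source reaching the same target''. When $s$ is the reserve source, that retention edge is $P^{*(0)}$ itself. It ends at the reserve target, which you have just shown is a leaf. So there is no second source, and your construction gives no cycle through $P_{21}^{(0,n+1)}$. Uniqueness of the bridge is therefore left open at exactly that edge.

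The repair, which is what the paper does, is to route the reserve source through an active target instead. Take $P_{21}^{(0,j)}$ into an active target $j$. Take a second source $s'$ of $j$: any passive source $i\ge1$, or the active source $j$ through its retention $P^{**(j)}$. Adding the departure edges of $s'$ and of the reserve closes the balanced $4$-cycle $z_{0\to j}-z_{s'\to j}+z_{s'\to\mathrm{dep}}-z_{0\to\mathrm{dep}}$.

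\textbf{Two smaller points.}

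First, the kernel vector attached to a cycle $C$ is not its plain indicator but the alternating $\pm1$ vector around $C$. Every edge is oriented from source to target, so the plain indicator has sum $2$ at each vertex of $C$ and does not lie in $\ker A$. The alternating signs are consistent because the graph is bipartite, so $C$ is even, and they give zero sum at every vertex.

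Second, in $A$ the reserve balance row has a zero coefficient on $P^{*(0)}$, since the retention columns vanish in $B(t)$. That row involves only the outflows $P_{21}^{(0,\cdot)}$. The relation $N_2^{(0)}(t+1)=N^{(0)}_{\mathrm{eff}}(t)\,P^{*(0)}(t)$ that you write is the target column-sum form. It is that balance row combined with the reserve normalisation \eqref{eq:model_norm2}, which turns $\sum_{j}P_{21}^{(0,j)}$ into $1-P^{*(0)}$. You should invoke that normalisation explicitly.
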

The proof places the coordinate functional in the row space, expands any cycle through $e$ over the spanning $4$-cycles, and exhibits a balanced $4$-cycle through every edge of the full graph except one (\cref{sec:appA}).
The reserve is the one target with a single incoming edge, since no cluster sends mass into it, and its balance and normalisation give \cref{eq:bridge-reserve} directly.
Since the closure $P^{*(0)}(t)$ is a share in $[0,1]$, \cref{eq:bridge-reserve} is also a consistency condition on the data: the observations must satisfy $0\le N_2^{(0)}(t+1)\le\Delta N_2^{(0)}(t)+N_2^{(0)}(t)$, so over one step the reserve cannot grow beyond its effective mass.
A panel that violates this bound at some step admits no operator consistent with the balance model there.
One coordinate of the operator is therefore identifiable in closed form, while the remaining freedom has dimension $2n^2+n$.
On sparse variants with a restricted set of admissible transitions further bridges can appear, and each contributes an identifiable coordinate in the same way.

\begin{theorem}[non-identifiability]\label{thm:nonident}
The residual $F_0(x)=\norm{Ax-b}^2$ is continuous and convex on $\real^m$ and coercive on the cone $\real^m_+$.
Its minimiser set $M_0=\{x\in\real^m_+:\ F_0(x)=\min_{y\ge0}F_0(y)\}$ is non-empty, convex and compact.
In the feasible case $M_0=M\cap\real^m_+$ is a bounded polytope of dimension at most $2n^2+n$, with equality when $M\cap\real^m_{>0}\neq\varnothing$.
The set $M_0$ is either a single point or has the cardinality of the continuum, and it carries a unique element of minimal Euclidean norm, the normal solution.
\end{theorem}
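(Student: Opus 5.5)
Continuity and convexity of $F_0(x)=\norm{Ax-b}^2$ are immediate: it is the composition of the affine map $x\mapsto Ax-b$ with the convex function $\norm{\cdot}^2$. Coercivity on the cone comes from \cref{lem:cone}. For $x\ge0$ we have $\norm{Ax-b}\ge\norm{Ax}-\norm b\ge\norm{x}/\sqrt p-\norm b$, so $F_0(x)\to\infty$ as $\norm x\to\infty$ within $\real^m_+$. Hence every sublevel set $\{x\ge0:\ F_0(x)\le c\}$ is closed, convex and bounded. Taking $c=F_0(0)$ gives a non-empty compact set, and Weierstrass then yields a minimiser. The set $M_0$ is the sublevel set at the minimum value, so it is non-empty, convex (as a sublevel set of a convex function intersected with a convex cone) and compact (closed inside a bounded sublevel set).

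In the feasible case, \cref{lem:rank} gives consistency, and some $x\ge0$ satisfies $Ax=b$. Then $\min F_0=0$ and $M_0=\{x\ge0:\ Ax=b\}=M\cap\real^m_+$. This set is cut out by finitely many linear equalities and inequalities, so it is a polyhedron, and it is bounded by the compactness just shown, hence a polytope. Its affine hull lies in $M$, so $\dim M_0\le\dim M=2n^2+n$ by \cref{lem:rank}.

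For equality, suppose $\bar x\in M\cap\real^m_{>0}$. Then there is a ball $B(\bar x,r)\subset\real^m_{>0}$, and $B(\bar x,r)\cap M\subset M_0$ is a relatively open subset of the affine space $M$. This forces $\dim M_0=\dim M$. The main subtlety is remembering that positivity alone, not feasibility, is what gives the full-dimensional slice; otherwise this step is routine.

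The cardinality dichotomy follows from convexity. If $M_0$ contains two distinct points $x\neq y$, it contains the segment $\{(1-s)x+sy:\ s\in[0,1]\}$. This segment is in bijection with $[0,1]$, so $|M_0|\ge\mathfrak c$, while $M_0\subset\real^m$ gives $|M_0|\le\mathfrak c$. Finally, the normal solution is the Euclidean projection of the origin onto the non-empty closed convex set $M_0$. Existence comes from compactness and continuity of $\norm\cdot$. Uniqueness comes from strict convexity of $\norm{\cdot}^2$: if two distinct minimisers $x,y$ share the minimal norm, their midpoint lies in $M_0$ and, by the parallelogram law, has strictly smaller norm.
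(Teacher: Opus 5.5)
Your proof is correct and follows essentially the same route as the paper. It obtains coercivity on the cone from \cref{lem:cone} via the triangle inequality, treats $M_0$ as a compact convex sublevel set intersected with the closed cone, uses a relative ball of $M$ around a strictly positive point for the full dimension, the segment argument for the dichotomy, and the strict-convexity midpoint argument for the normal solution. The differences are cosmetic: you bound the polytope through the compactness already established rather than through the recession cone $\ker A\cap\real^m_+=\{0\}$. Note also that a nonnegative solution of $Ax=b$ is the feasibility hypothesis itself; it does not follow from the consistency in \cref{lem:rank}, which only supplies a solution in $\real^m$.
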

The proof draws coercivity from \cref{eq:cone} and uniqueness of the normal solution from the strict convexity of the squared Euclidean norm; the dichotomy holds because a convex set with two distinct points contains the segment between them (\cref{sec:appA}).

For $2n^2+n>0$ the polytope $M_0$ is generically larger than a point, and the recovered operator $P$ is non-identifiable: a whole manifold of operators fits the observation equally well, of dimension $\dim M=820$ at $n=20$.
Full row rank and $\dim\ker A=2n^2+n$ are confirmed numerically for $n\in\{2,3,5,12,20\}$, with the smallest non-zero singular value $\sigma_{\min}^+$ ranging from $0.47$ at $n=20$ to $0.77$, which keeps the identifiable part stable.
For every $\alpha>0$ the regularised problem \cref{eq:tikh} has a unique solution by strict convexity \citep{Tikhonov1977}, and the next lemma identifies its limit.

\begin{lemma}[regularisation limit]\label{lem:limit}
Let $M_0$ be non-empty and feasible and let $x_\alpha$ minimise \cref{eq:tikh} over $x\ge0$.
As $\alpha\to0$ the family $x_\alpha$ converges to the unique point of $M_0$ closest to the reference $\bar x$ in the Euclidean norm.
\end{lemma}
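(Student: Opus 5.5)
The plan is the standard Tikhonov limit argument, run on the convex set $K=\real^m_+$. Write $F_\alpha(x)=F_0(x)+\alpha\norm{x-\bar x}^2$, where $F_0(x)=\norm{Ax-b}^2$. Feasibility means that $\min_{x\ge0}F_0=0$ and that $M_0=M\cap\real^m_+$. By \cref{thm:nonident}, $M_0$ is non-empty, convex and compact, so the Euclidean projection $x^\star$ of $\bar x$ onto $M_0$ exists and is unique by strict convexity of $\norm{\cdot-\bar x}^2$. The claim is that $x_\alpha\to x^\star$.

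First I will derive the comparison inequalities. Since $x_\alpha$ minimises $F_\alpha$ over $K$ and $x^\star\in K$ with $F_0(x^\star)=0$, we have
\[
F_0(x_\alpha)+\alpha\norm{x_\alpha-\bar x}^2\le \alpha\norm{x^\star-\bar x}^2 .
\]
Two consequences follow directly. One is $\norm{x_\alpha-\bar x}\le\norm{x^\star-\bar x}$, so the family is bounded uniformly in $\alpha$. The other is $F_0(x_\alpha)\le\alpha\norm{x^\star-\bar x}^2\to0$. Boundedness could also be read off \cref{eq:cone}, but the comparison inequality already gives it.

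Next comes a compactness argument. Take any sequence $\alpha_k\to0$ and extract a convergent subsequence $x_{\alpha_k}\to\hat x$. The cone is closed, so $\hat x\ge0$. Continuity of $F_0$ gives $F_0(\hat x)=0$, hence $\hat x\in M_0$. Passing to the limit in the norm bound gives $\norm{\hat x-\bar x}\le\norm{x^\star-\bar x}$. Since $x^\star$ is the unique minimiser of the distance to $\bar x$ on $M_0$, it follows that $\hat x=x^\star$. Every subsequential limit is therefore $x^\star$, and the bounded family converges: $x_\alpha\to x^\star$.

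The main obstacle is making sure that the limit lands in $M_0$ rather than only in $M$, and that the comparison point is admissible. Both rely on the feasibility hypothesis ($\min F_0=0$ is attained inside the cone) and on the closedness of the cone. If feasibility failed, $F_0(x^\star)$ would be a positive minimum and the comparison would need the refined inequality $F_0(x_\alpha)-F_0(x^\star)\ge0$. The same argument would still go through with $M_0$ as the minimiser set. I would note this in a remark, but the statement only requires the feasible case.
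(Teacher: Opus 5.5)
Your proposal is correct and follows the paper's proof essentially step for step: the comparison $F_\alpha(x_\alpha)\le F_\alpha(x^\star)$ gives both the residual bound and the uniform bound $\norm{x_\alpha-\bar x}\le\norm{x^\star-\bar x}$, and the subsequence argument then identifies every accumulation point with $x^\star$, using closedness of the cone and uniqueness of the projection. Your side remark on the infeasible case, which uses $F_0(x_\alpha)\ge F_0(x^\star)$, is also sound, though the statement does not require it.
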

The proof compares $x_\alpha$ with the projection of $\bar x$ onto $M_0$, which bounds the residual and the distance to $\bar x$ along the family, and identifies every accumulation point with that projection (\cref{sec:appA}).
For $\bar x=0$ the limit is the normal solution; other references reach the same point whenever their projection onto $M_0$ lands on it.

\begin{theorem}[reference as a Bregman projection]\label{thm:bregman}
Let $\bar x$ be a reference vector and let the recovered operator be $x=\arg\min_{Ax=b}D(x,\bar x)$ for a divergence $D$.
(a) For the Euclidean divergence $D(x,\bar x)=\norm{x-\bar x}^2$ this is the orthogonal projection of $\bar x$ onto the manifold $M$,
\begin{equation}\label{eq:proj}
  x=\bar x-A^{\!\top}(AA^{\!\top})^{-1}(A\bar x-b),
\end{equation}
and for $\bar x=0$ it is the affine minimum-norm point, which coincides with the normal solution of the polytope $M_0$ when that point is feasible.
(b) For the entropic Kullback-Leibler divergence assume the reference is componentwise positive, $\bar x>0$.
The recovered operator is then the $I$-projection of $\bar x$ onto $M\cap\real^m_+$ \citep{Csiszar1975}, the maximum-entropy coupling relative to $\bar x$: it exists because the divergence is finite and lower semicontinuous on the compact feasible set, and it is unique by strict convexity.
Under Csisz\'ar's condition $M\cap\relint\Delta\neq\varnothing$, where $\Delta$ is the corresponding probability polytope, the product of the source outflow simplices, the projection has the exponential form with finite multipliers and is computed by alternating Bregman projections onto the constraint blocks \citep{BregmanBCCNP2015}; for a uniform $\bar x$ it is the maximum-entropy operator.
Both are the Bregman projection of $\bar x$ onto $M$ for the chosen divergence \citep{Bregman1967}.
\end{theorem}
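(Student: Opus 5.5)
Part (a) is a Lagrange-multiplier computation on the affine subspace $M$, with \cref{lem:rank} supplying invertibility. Part (b) is a compactness-plus-strict-convexity argument on the polytope $M_0$ of \cref{thm:nonident}, with the exponential form quoted from \citet{Csiszar1975}. For (a), minimise $\norm{x-\bar x}^2$ subject to $Ax=b$. By \cref{lem:rank} $A$ has full row rank, so $AA^{\!\top}$ is invertible and the constraint set $M$ is non-empty. The Lagrangian stationarity condition is $x-\bar x=A^{\!\top}\lambda$. Substituting into $Ax=b$ gives $\lambda=-(AA^{\!\top})^{-1}(A\bar x-b)$, which is \cref{eq:proj}.

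To confirm this is the orthogonal projection, I would check two facts. First, $x-\bar x\in\range A^{\!\top}=(\ker A)^\perp$. Second, for every $v\in\ker A$ the Pythagorean identity gives $\norm{x+v-\bar x}^2=\norm{x-\bar x}^2+\norm v^2$. Together these give existence, uniqueness and minimality. For $\bar x=0$ the formula becomes $x=A^{\!\top}(AA^{\!\top})^{-1}b$, the affine minimum-norm point. If this point is non-negative, it lies in $M_0=M\cap\real^m_+$ and minimises the norm over the larger set $M$, hence also over $M_0$. It therefore coincides with the normal solution of \cref{thm:nonident}.

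For (b), take $D(x,\bar x)=\sum_k\bigl(x_k\log(x_k/\bar x_k)-x_k+\bar x_k\bigr)$ with $0\log0=0$. Since $\bar x>0$, $D$ is finite and continuous on $\real^m_+$. It is strictly convex because $s\mapsto s\log s$ is strictly convex on $[0,\infty)$. In the feasible case, \cref{thm:nonident} (via \cref{lem:cone}) makes $M\cap\real^m_+=M_0$ a non-empty compact polytope. A continuous function on a compact set attains its minimum, which gives existence. Strict convexity on the convex set $M_0$ gives uniqueness. This minimiser is by definition the $I$-projection of $\bar x$.

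The exponential form is where I expect most of the care to be needed. The normalisation rows express $M\cap\real^m_+$ as the intersection of the product of source outflow simplices $\Delta$ with the balance affine set. The condition $M\cap\relint\Delta\neq\varnothing$ is then a Slater point for the convex program. I would invoke \citet{Csiszar1975}, or derive it directly. Slater's condition gives strong duality with finite multipliers $\mu$, and since the minimiser has positive components, stationarity reads $\log(x_k/\bar x_k)=(A^{\!\top}\mu)_k$. This yields $x_k=\bar x_k\exp((A^{\!\top}\mu)_k)$.

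The delicate point is showing the minimiser has no zero coordinates. The derivative of $s\log s$ blows up at $0$. So a zero coordinate can be strictly improved by moving toward the relative-interior point, which contradicts minimality. Convergence of alternating Bregman projections onto the affine blocks (balance rows, normalisation rows) is then the cited result of \citet{BregmanBCCNP2015}. For uniform $\bar x=c\mathbf 1$ the objective equals $\sum_k x_k\log x_k$ plus terms that are constant on $M$, since $\sum_k x_k$ is fixed by the normalisation rows. Hence the minimiser is the maximum-entropy operator. Finally, both cases are instances of $\arg\min_{Ax=b}D_\phi(x,\bar x)$ for the Bregman divergence $D_\phi$ of $\phi=\tfrac12\norm{\cdot}^2$ or $\phi=\sum_k x_k\log x_k$, which is the unifying statement \citep{Bregman1967}.
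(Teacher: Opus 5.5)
Your proposal is correct and follows the paper's proof essentially step for step. Part (a) is Lagrangian stationarity with $AA^{\top}$ invertible by full row rank. In part (b), existence comes from compactness of the feasible polytope and uniqueness from strict convexity, the exponential form is Csisz\'ar's, the alternating Bregman projections are cited, and the uniform reference reduces to the negative entropy because $\sum_k x_k=2n+1$ is fixed by the normalisation rows. Your additions are sound refinements of steps the paper only asserts or cites: the Pythagorean check, the explicit non-negativity condition under which the affine minimum-norm point is the normal solution of $M_0$, and the direct positivity-then-stationarity derivation of $x_k=\bar x_k\exp\bigl((A^{\top}\mu)_k\bigr)$ from the infinite slope of $s\log s$ at $0$.
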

The proof of (a) is Lagrangian stationarity with the full-rank $AA^{\!\top}$ from \cref{lem:rank}; part (b) is the $I$-projection under linear constraints, realised by alternating Bregman projections over the constraint blocks (\cref{sec:appA}).
\Cref{fig:proj} shows the Euclidean case, the reference vector dropped onto the manifold along the perpendicular.

\begin{remark}
The implementation minimises \cref{eq:tikh} under $x\ge0$ by projected steepest descent.
The closed form \cref{eq:proj} is its exact solution when no bound is active, and otherwise \cref{eq:proj} is the interior iterate that the projection corrects.
The Kullback-Leibler variant applies to positive references and is computed by the alternating Bregman projections of \cref{thm:bregman}.
It excludes the inertial reference $P_0$: the balance rows annihilate the support of $P_0$, every feasible $x$ with a nonzero increment carries mass outside that support, and $D(x,P_0)=+\infty$ on all of $M$, so no $I$-projection exists and the inertial reference lives in the Euclidean geometry.
\end{remark}

\begin{figure}[t]\centering
  \begin{tikzpicture}[>=Stealth, font=\small]
    \draw[thick] (0,0.4) -- (5,2.1);
    \node[anchor=west] at (5,2.1) {$M$};
    \coordinate (xb) at (1.6,2.7);
    \fill (xb) circle (2pt);
    \node[anchor=south] at (xb) {$\bar x$};
    \coordinate (foot) at (2.14,1.13);
    \draw[dashed] (xb) -- (foot);
    \fill (foot) circle (2pt);
    \node[anchor=north west] at (foot) {recovered $x$};
    \coordinate (pa) at ($(foot)!2.5mm!(xb)$);
    \coordinate (pb) at ($(foot)!2.5mm!(5,2.1)$);
    \draw ($(pa)+(pb)-(foot)$) -- (pa) ($(pa)+(pb)-(foot)$) -- (pb);
  \end{tikzpicture}
  \caption{The reference vector as a projection.
  For the Euclidean divergence the recovered operator is the orthogonal projection of the reference $\bar x$ onto the solution manifold $M$.}
  \label{fig:proj}
\end{figure}

\Cref{lem:rank,lem:cone,lem:kernel,thm:nonident,thm:bregman} together fix the geometry.
The data determine the manifold $M$, and the pair (reference, divergence geometry) selects the representative on it.
On the outflow simplex of a single source the minimum-norm prior $\bar x=0$ gives the uniform outflow, the Shannon maximum-entropy prior there (\cref{fig:ris1}), and on inertial systems this underestimates retention.
Entropic optimal transport does not remove this choice.
It parametrises the choice through the entropic weight $\varepsilon$ for a fixed ground cost, here the unit cost of changing cluster, $C_{ij}=1-\delta_{ij}$: a small $\varepsilon$ gives high retention and inertia, a large $\varepsilon$ gives the maximum-entropy default \citep{Schiebinger2019,Cuturi2013,Jaynes1957}.
This motivates a substantive prior, the subject of the next section.

\begin{figure}[t]\centering
  \includegraphics[scale=0.846]{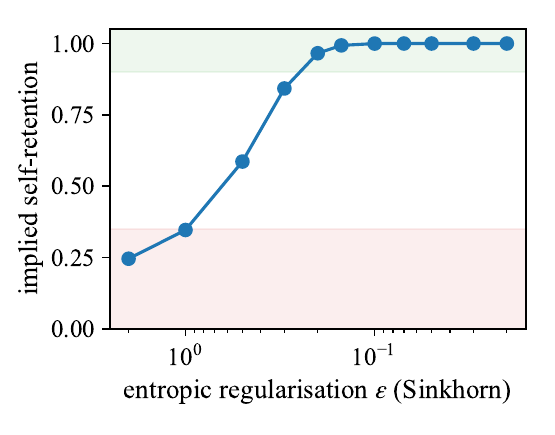}
  \caption{Optimal transport parametrises the prior.
  For the ground cost $C_{ij}=1-\delta_{ij}$ and matched marginals the recovered retention moves from the maximum-entropy default at large entropic weight to full retention at small entropic weight.}
  \label{fig:ris1}
\end{figure}

\section{The least-action prior on the operator trajectory}\label{sec:action}

The data fix the manifold of consistent operators, and a prior selects one point on it
(\cref{sec:nonident}).
This section states the prior that the paper argues for and situates it among the classical
principles it generalises.
The idea is a statement about the whole trajectory of operators: the rule of evolution
changes as little as the data allow.
We call it least action on the operator trajectory (informally, ``behavioural laziness'').
It selects the representative inside the manifold while leaving the observed component of the
dynamics untouched.

The prior rests on a distinction between two kinds of inertia (\cref{fig:staterule}).

\begin{definition}[inertia of the state]\label{def:inertia-state}
Inertia of the state is the requirement $N(t+1)\approx N(t)$.
It is the naive forecast, persistence, which sets the accuracy frontier on inertial series
\citep{MeeseRogoff1983,AtkesonOhanian2001,StockWatson2007}.
\end{definition}

\begin{definition}[inertia of the rule]\label{def:inertia-rule}
Inertia of the rule is the requirement $\Pt(t+1)\approx\Pt(t)$ for an operator $\Pt\neq I$.
It carries the signal about operator drift that the naive forecast cannot supply, since the
naive forecast constrains the state and says nothing about the rule.
\end{definition}

\begin{figure}[t]\centering
  \begin{tikzpicture}[>=Stealth, font=\small]
    \draw[->] (0,0) -- (4.6,0) node[right]{$t$};
    \draw[->] (0,-0.15) -- (0,1.4);
    \draw[thick] (0.15,0.85) -- (4.1,0.85);
    \node[left] at (0,0.85) {$N$};
    \node[anchor=south west] at (0.15,0.9) {inertia of the state};
    \begin{scope}[shift={(0,-2.3)}]
      \draw[->] (0,0) -- (4.6,0) node[right]{$t$};
      \draw[->] (0,-0.15) -- (0,1.7);
      \draw[thick] (0.15,0.55) -- (4.1,0.55);
      \node[left] at (0,0.55) {$P$};
      \draw[thick,dashed] (0.15,0.3) -- (4.1,1.3);
      \node[right] at (4.1,1.3) {$N$};
      \node[anchor=south west] at (0.15,1.2) {inertia of the rule};
    \end{scope}
  \end{tikzpicture}
  \caption{Two inertias.
  Inertia of the state holds $N(t)$ flat, which is the naive forecast.
  Inertia of the rule holds $P(t)$ flat while $N(t)$ drifts.}
  \label{fig:staterule}
\end{figure}

The running example of \cref{sec:nonident} shows the difference (\cref{fig:staterule}).
Inertia of the state holds the counts $(100,100)$ across the step.
Inertia of the rule holds the routing near its previous value, which keeps the retention $1-s$
close to where it was and away from the maximum-entropy value $0.5$ that the minimum-norm
solution assigns.
The two inertias act on different objects, and only the second reaches the operator.

The prior takes the form of a discrete action on the operator trajectory.

\begin{definition}[discrete action]\label{def:action}
The least-action prior selects the trajectory that solves
\begin{equation}\label{eq:action}
  \min_{\Amat(t)\Pt(t)=\ncal(t)}\ \sum_t\Bigl[\lambda_s\,D(\Pt(t),P_0)
  +\lambda_d\,D(\Pt(t),\Pt(t-1))-\langle \Rt, g(\Pt(t))\rangle\Bigr],
\end{equation}
where $\ncal(t)\equiv\ncal(t,t+1)$, $D$ is an operator-discrepancy measure, and
$\lambda_s,\lambda_d\ge 0$ weight the static and the dynamic term.
The static term $D(\cdot,P_0)$ prices any departure from the full-retention vector $P_0$, the
vector of the identity operator $I$, the cost of acting at all.
The dynamic term $D(\cdot,\Pt(t-1))$ prices a change of the rule between steps.
The map $g$ is a linear feature map onto the target components of the operator, and $\Rt$ is a
control vector that enters as a reward and encodes a known external forcing.
The implementation uses the Euclidean $D$, which admits an exact closed form and accepts the
boundary reference $P_0$; the entropic $D$ requires a positive reference
(\cref{thm:bregman}).
\end{definition}

For a linear $g$ the functional \eqref{eq:action} stays convex, and the control term sits in
the objective under the hard balance constraint, so it moves the solution only along
$\ket$ and never disturbs the observed balance.
All experiments below take $\Rt\equiv 0$ and leave $g$ unused; the control apparatus is
developed elsewhere.

\subsection{The connection to least action}\label{subsec:leastaction}

The form of \eqref{eq:action} is more than an analogy.

\begin{proposition}[discrete kinetic energy]\label{prop:bb}
In the Frobenius metric the discrete action
\[
  \sum_t\norm{\Pt(t)-\Pt(t-1)}^2
\]
is the time-discretisation of the flat analogue of the Benamou-Brenier kinetic energy of the operator path in the space of substochastic matrices.
\end{proposition}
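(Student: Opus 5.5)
The plan is to make precise what ``flat analogue of the Benamou-Brenier kinetic energy'' means and then check that the discrete sum is its Riemann discretisation.

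First I fix the setting. The space of substochastic matrices, or equivalently the polytope $\Delta$ of indicator vectors satisfying the normalisations \eqref{eq:model_norm1}--\eqref{eq:model_norm2} and non-negativity, is a convex subset of $\real^m$. I equip it with the constant (flat) Frobenius metric $\langle U,V\rangle=\operatorname{tr}(U^{\top}V)$ instead of the density-weighted metric of Benamou-Brenier. For a curve $\Pt:[0,T]\to\Delta$ I recall the Benamou-Brenier energy $\int_0^T\!\int |v|^2\,d\rho_s\,ds$, with $\partial_s\rho+\nabla\cdot(\rho v)=0$. Its flat analogue replaces the Wasserstein tangent norm $\int|v|^2d\rho$ by the Euclidean norm of the velocity, $\norm{\dot\Pt(s)}^2$. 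The continuous object is then $\mathcal E[\Pt]=\int_0^T\norm{\dot\Pt(s)}_F^2\,ds$. I would state this identification as the definition being used. The proposition is partly definitional, and the main content lies in justifying this substitution: the Wasserstein metric tensor at a point $\rho$ reduces to a constant multiple of the identity when the weighting $\rho$ is frozen (linearised around a fixed reference), which is exactly the Euclidean $D$ of \cref{def:action}.

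Second, I discretise time with unit step $\Delta t=1$ on the grid $t=0,1,\dots,T$, matching the snapshot spacing. I replace $\dot\Pt$ by the forward difference $(\Pt(t)-\Pt(t-1))/\Delta t$ and the integral by a left Riemann sum. This gives $\sum_t\Delta t\,\norm{(\Pt(t)-\Pt(t-1))/\Delta t}^2=\frac1{\Delta t}\sum_t\norm{\Pt(t)-\Pt(t-1)}^2$, which at $\Delta t=1$ is the stated sum. To show this is a genuine discretisation and not just a formal substitution, I would check consistency. For a $C^1$ path sampled at step $h$, $\frac1h\sum\norm{\Pt(t_k)-\Pt(t_{k-1})}^2\to\mathcal E[\Pt]$ as $h\to0$, by the mean value theorem and uniform continuity of $\dot\Pt$.

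Third, I would confirm the variational counterpart, since least action is about minimisers. By Jensen or Cauchy-Schwarz, $\norm{\Pt(t)-\Pt(t-1)}^2\le\int_{t-1}^{t}\norm{\dot\Pt}^2ds$, with equality exactly for affine interpolation. So the discrete sum equals the minimum of $\mathcal E$ over paths through the given nodes. In flat space the geodesics are straight segments, the analogue of displacement interpolation, and convexity of $\Delta$ keeps these segments substochastic.

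The main obstacle is the first step: making the flat analogue a precise statement rather than a slogan. I would handle it by stating explicitly that the mobility/density weighting is dropped and the Frobenius inner product is used. The remaining steps are then routine.
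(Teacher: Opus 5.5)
Your proof is correct, and Steps 2--3 are essentially the paper's argument. The paper interpolates the sampled operators linearly on each unit interval, which is admissible because the substochastic set is convex. The constant velocity then makes $\int_{t-1}^{t}\norm{\dot\Pt}^2\,d\tau=\norm{\Pt(t)-\Pt(t-1)}^2$ exactly. Cauchy--Schwarz shows this interpolant has the least energy among all curves through the same nodes, which is your Step 3.

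Your $h\to0$ consistency check is something the paper does not do. It is harmless, but it only shows the sum is a consistent scheme, whereas the paper's point is that at the sampling step the identity is exact. For that check, write $\Pt(t_k)-\Pt(t_{k-1})=\int_{t_{k-1}}^{t_k}\dot\Pt\,ds$ rather than invoking the mean value theorem, which has no equality form for vector-valued maps.

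Drop the justification in Step 1. Freezing $\rho$ in the Wasserstein metric does not give a multiple of the identity on the tangent vector $\dot\rho$. It gives the weighted $\dot H^{-1}(\rho)$ norm $\int|\nabla\phi|^2\,d\rho$ with $-\nabla\cdot(\rho\nabla\phi)=\dot\rho$. The Benamou--Brenier velocity $v$ is the transporting field, not the time derivative of the curve, and $\Pt$ is not a density obeying a continuity equation.

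The paper makes no such reduction claim. It borrows only the least-kinetic-energy functional and transplants it to the flat Frobenius geometry with the density weight removed. That is the definitional reading you fall back on at the end, so with that sentence deleted your proposal stands.
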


\Cref{prop:bb} gives the condition of minimal operator change a precise variational meaning,
and its proof is in \cref{sec:appA}.
The classical fluctuation theory supplies a second reading, and it reaches both penalty
terms of the action.

\begin{remark}[Onsager-Machlup form of the action]\label{rem:onsager}
Take the Ornstein-Uhlenbeck reference diffusion
$dX=-\gamma(X-P_0)\,d\tau+\sqrt{2\varepsilon}\,dW$ on the operator space, drawn toward the
full-retention vector $P_0$.
Its Onsager-Machlup Lagrangian is $\tfrac{1}{4\varepsilon}\norm{\dot X+\gamma(X-P_0)}^2$
plus a drift-divergence correction that is constant for this linear drift
\citep{OnsagerMachlup1953}.
The cross term of the square is a total derivative and contributes only at the ends of the
horizon, so the interior part of the action is the Euclidean integral
\[
  \frac{1}{4\varepsilon}\int_0^T\bigl(\norm{\dot X}^2+\gamma^2\norm{X-P_0}^2\bigr)\,d\tau,
\]
kinetic energy plus potential energy.
Its time-discretisation with step $h$ consists of the two penalty terms of \eqref{eq:action}
with the Euclidean $D$, $\lambda_d=1/(4\varepsilon h)$ and $\lambda_s=\gamma^2h/(4\varepsilon)$,
and every pair of positive weights arises from a choice of $\varepsilon$, $\gamma$ and $h$
(\cref{sec:appA}).
The boundary term moves only the stationarity conditions at the ends of the horizon and
leaves the interior recursion of \cref{rem:el} unchanged, so for $\Rt\equiv0$ the prior
prices the operator trajectory by the discretised Onsager-Machlup action of an inertial
reference diffusion drawn toward $P_0$.
In discrete time the reading is exact: the two penalty terms are, up to an additive
constant, the negative logarithm of a Gaussian density on the trajectory, and minimisation
under the balance constraints picks the mode of the conditioned law.
The reference is Gaussian and ignores the constraint geometry, so $\Pt\ge0$ and the
admissible-transition structure stay exogenous restrictions; the discrete mode reading
survives them, since truncating a log-concave density to a convex set does not move the
constrained minimiser, while the continuum tube interpretation stops at an active bound.
\end{remark}

The functional \eqref{eq:action} states what the prior prices over the whole trajectory.

\begin{remark}[stationarity of the action]\label{rem:el}
For the Euclidean $D$, $\Rt\equiv0$ and no active bounds, stationarity of \eqref{eq:action}
in $\Pt(t)$ gives
\[
  \lambda_d\bigl(\Pt(t+1)-2\Pt(t)+\Pt(t-1)\bigr)
  =\lambda_s\bigl(\Pt(t)-P_0\bigr)+\Amat(t)^{\!\top}\mu(t),
\]
with $\mu(t)$ the multiplier of the balance constraint at time $t$.
This three-point recursion couples each operator to both neighbours, so the joint minimiser
of \eqref{eq:action} is a boundary-value object and depends on later observations.
The recursion is the discrete Euler-Lagrange equation of the Euclidean action of
\cref{rem:onsager}, whose kinetic-plus-potential form makes the boundary-value structure
expected.
\end{remark}

The recovered trajectory used for forecasting must stay causal.
The implementation therefore replaces the joint minimisation by a causal stepwise relaxation:
at each $t$ the past is frozen and the dynamic cost is minimised against fidelity to the
fresh operator on the fiber of the current observation, which yields the recursion of
\cref{subsec:nullspace} (\cref{lem:prox}).
That scheme uses past observations only.

\subsection{The connection to optimal transport}\label{subsec:ot}

Least action between marginals is dynamic optimal transport, and its static form is the
Kantorovich coupling of least cost.
The maximum-entropy coupling is the static end of the entropic family that regularises it
(\cref{sec:appC} recalls the two constructions).
The feasible set of the segmentation problem carries the transport structure exactly.

\begin{proposition}[transportation-polytope form]\label{prop:transport}
In the flow coordinates $z^{(i,j)}=N^{(i)}(t)\,\Pt^{(i,j)}(t)$ the feasible set of one layer is
affinely isomorphic to a transportation polytope with the observed marginals and a slack sink
for departure, with two qualifications: the admissible-transition graph imposes structural
zeros, since each passive target admits two entries and the reserve target one, and the
reserve row carries the effective mass $N^{(0)}_{\mathrm{eff}}(t)$.
The minimum-norm reference is the weighted-quadratic transport plan on this polytope with
source weights $1/(N^{(i)})^2$: it minimises $\sum_{i,j}(z^{(i,j)})^2/(N^{(i)})^2$, the
plan has the clipped potential form
$z^{(i,j)}=\max\bigl(a_i+(N^{(i)})^2 b_j,\,0\bigr)$, and it reduces to the standard
quadratically regularised ($\ell_2$) plan with additive potentials \citep{PeyreCuturi2019}
when the source masses are equal.
On the outflow simplex of a single source, the feasible set cut out by the normalisation
alone, the minimum-norm reference is the uniform outflow and coincides with the
maximum-entropy plan.
\end{proposition}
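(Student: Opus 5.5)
The plan is to change variables to flow coordinates and read every claim off the resulting linear constraints. For each source cluster $i$ we write $z^{(i,j)}=N^{(i)}(t)\,\Pt^{(i,j)}(t)$, with $N^{(0)}_{\mathrm{eff}}(t)$ in the reserve row, and we treat the retention closures $P^{*},P^{**}$ as the diagonal flows $z^{(i,i)}$. All source masses are positive by the standing assumption of \cref{sec:nonident}, so this is a diagonal rescaling of $x$, hence an affine, indeed linear, bijection, and it maps $x\ge0$ onto $z\ge0$.

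In these coordinates the normalisation rows \eqref{eq:model_norm1}--\eqref{eq:model_norm2} become the row sums $\sum_j z^{(i,j)}=N^{(i)}$. The balance rows \eqref{eq:gen_1_math}, once the retention flow is added back to both sides, become the column sums $\sum_i z^{(i,j)}=N^{(j)}(t+1)$ for every cluster target. Departure has no balance row, so its column is free and its total is forced by \eqref{eq:balance}; this makes it the slack sink. The structural zeros come from the admissible movements: a passive target $S_2^{(i)}$ receives only from $S_1^{(i)}$ and from its own retention, which gives two entries, and the reserve receives only its own retention, which gives one entry. This matches the bridge found in \cref{cor:forest}. Together these identifications give the transportation-polytope statement.

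For the minimum-norm part, we substitute $x^{(i,j)}=z^{(i,j)}/N^{(i)}$ into $\norm{x}^2$, which gives the weighted objective $\sum (z^{(i,j)})^2/(N^{(i)})^2$. We then write the KKT conditions on the polytope with multipliers $\alpha_i$ for the rows, $\beta_j$ for the columns, and complementary slackness for $z\ge0$. Stationarity reads $2z^{(i,j)}/(N^{(i)})^2=\alpha_i+\beta_j+\nu_{ij}$ with $\nu_{ij}\ge0$ and $\nu_{ij}z^{(i,j)}=0$. Solving for $z$ gives the clipped form $z^{(i,j)}=\max\bigl(a_i+(N^{(i)})^2b_j,0\bigr)$, where $a_i=(N^{(i)})^2\alpha_i/2$ and $b_j=\beta_j/2$; for the departure column we set $b_j=0$. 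Because the objective is strictly convex, the plan is unique and the KKT conditions are sufficient. When all $N^{(i)}$ are equal, the weights are constant, and rescaling recovers the additive potentials of the $\ell_2$-regularised plan.

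For the single-source claim, minimising $\sum_j x_j^2$ on the simplex $\sum_j x_j=1$, $x\ge0$ has the symmetric solution $x_j=1/k$ by Cauchy--Schwarz. That is also the entropy maximiser, so the two plans coincide.

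The main obstacle is the bookkeeping in the balance rows. The increment form of $A$ drops the retention columns, and we must check that adding $N^{(j)}(t)$, the retention flow, to each side turns each row into an exact column-sum constraint. For the active targets this includes the reserve inflow term $\Delta N_2^{(0)}P_{21}^{(0,i)}$, which is carried by the effective mass. Checking this row by row against \cref{sec:appB}, in particular the reserve row with its exogenous correction, is where we expect errors to arise.
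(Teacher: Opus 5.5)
Your proposal is correct and follows the paper's proof essentially step for step. Both rescale diagonally to flow coordinates (reserve through $N^{(0)}_{\mathrm{eff}}$), read the normalisations as row sums and balance-plus-normalisation as column sums with departure as the free slack column, derive the clipped potentials from KKT with $b_{n+1}=0$, and apply Cauchy--Schwarz on the single-source simplex. The one cosmetic difference is your $a_i=(N^{(i)})^2\alpha_i/2$ against the paper's $a_i=N^{(i)}\alpha_i/2$: it only reflects writing the row constraint in flow rather than probability coordinates, and it does not change the form of the plan.
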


The proof is in \cref{sec:appA}.
For unequal source masses the weighted plan, the $\ell_2$ plan and the entropic (Sinkhorn)
plan \citep{Cuturi2013} select different points of the same polytope, and the coincidence on
the single-source simplex does not survive the coupling that the column constraints induce.
\Cref{prop:transport} realises the minimum-norm reference as a plan on the transportation
polytope; the inertial reference is a reference vector, and it selects its representative on
the same polytope through the Euclidean projection of \cref{thm:bregman}.
\Cref{fig:ris1} sweeps that polytope through the entropic weight between the maximum-entropy
default and full retention.

\subsection{The connection to the Schr\"odinger bridge}\label{subsec:bridge}

The soft version of rule inertia relates to the Schr\"odinger bridge, the most likely
evolution between two marginals under a reference dynamics (\cref{sec:appC}).
The rigorous statement is an algebraic identity.

\begin{lemma}[temporal smoothing as a proximal step]\label{lem:prox}
Temporal smoothing $\Pt_s(t)=(1-\lambda)\Pt(t)+\lambda\,\Pt_s(t-1)$ is the causal proximal
step
\[
  \Pt_s(t)=\arg\min_{x}\bigl[(1-\lambda)\norm{x-\Pt(t)}^2
  +\lambda\norm{x-\Pt_s(t-1)}^2\bigr],
\]
whose second term is the dynamic term of \eqref{eq:action} with the past frozen, and it is
one explicit Euler step of size $h=1-\lambda$ of the relaxation $\dot X=-(X-\Pt)$.
Null-space smoothing is the same proximal step under the constraint
$\Amat(t)x=\Amat(t)\Pt(t)$, the recursion of \cref{subsec:nullspace}.
\end{lemma}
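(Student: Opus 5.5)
The plan is to verify the three claims of \cref{lem:prox} in turn; each reduces to a short computation. For the unconstrained proximal step, set
\[
\Phi(x)=(1-\lambda)\norm{x-\Pt(t)}^2+\lambda\norm{x-\Pt_s(t-1)}^2 .
\]
For $0\le\lambda\le1$ with $\lambda$ and $1-\lambda$ not both zero, $\Phi$ is a strictly convex quadratic with Hessian $2I$, since the weights sum to one. Setting $\nabla\Phi=0$ gives
\[
(1-\lambda)(x-\Pt(t))+\lambda(x-\Pt_s(t-1))=0,
\]
so the unique minimiser is the convex combination $x=(1-\lambda)\Pt(t)+\lambda\Pt_s(t-1)$. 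This is exactly the smoothing recursion. The second term is $\lambda D(x,\Pt_s(t-1))$ with the Euclidean $D$ of \cref{def:action}, the dynamic term with $\lambda_d=\lambda$ and the past frozen at the already smoothed $\Pt_s(t-1)$. The first term replaces the static term by fidelity to the fresh operator.

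For the Euler reading, I would discretise $\dot X=-(X-\Pt)$ explicitly with step $h$, starting from $X_{k}=\Pt_s(t-1)$ and holding the target fixed at $\Pt=\Pt(t)$. One step gives
\[
X_{k+1}=X_k-h(X_k-\Pt(t))=(1-h)\Pt_s(t-1)+h\Pt(t).
\]
With $h=1-\lambda$ this is $\lambda\Pt_s(t-1)+(1-\lambda)\Pt(t)=\Pt_s(t)$. This step is pure algebra.

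For the constrained version, I would minimise the same $\Phi$ subject to $\Amat(t)x=\Amat(t)\Pt(t)$. Write $x=\Pt(t)+v$ with $v\in\ker\Amat(t)$. Then $\Phi$ becomes
\[
(1-\lambda)\norm{v}^2+\lambda\norm{v-(\Pt_s(t-1)-\Pt(t))}^2 .
\]
Completing the square shows this equals $\norm{v-\lambda w}^2$ plus a constant, where $w=\Pt_s(t-1)-\Pt(t)$. The minimiser over the subspace $\ker\Amat(t)$ is therefore the orthogonal projection of $\lambda w$ onto $\ker\Amat(t)$. Using the projector $\Pi=I-\Amat^{\top}(\Amat\Amat^{\top})^{-1}\Amat$, which is well defined by the full row rank of \cref{lem:rank}, the result is
\[
x=\Pt(t)+\lambda\,\Pi\bigl(\Pt_s(t-1)-\Pt(t)\bigr).
\]
Equivalently, it is the projection formula of \cref{thm:bregman}(a) applied to the unconstrained smoothed point.

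The only point needing care is the identification of this formula with the recursion of \cref{subsec:nullspace}. That section lies beyond the visible text, so I would state the derived closed form and note that it is the recursion defined there. I would also remark that the nonnegativity bound is set aside here, as in \cref{rem:el}, and check that $\Amat(t)x=\Amat(t)\Pt(t)$ holds exactly because $\Amat\Pi=0$.
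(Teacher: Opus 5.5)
Your proposal is correct and follows the paper's proof essentially step for step. The unconstrained minimiser is the convex combination $y=(1-\lambda)\Pt(t)+\lambda\Pt_s(t-1)$, the Euler step is a rearrangement of the recursion, and the constrained minimiser is the orthogonal projection of $y$ onto the fiber, $\Pt(t)+\lambda\bigl(I-\Amat^{\!\top}(\Amat\Amat^{\!\top})^{-1}\Amat\bigr)\bigl(\Pt_s(t-1)-\Pt(t)\bigr)$; your completing of the square over $\ker\Amat(t)$ is the same computation.

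On the identification you deferred: \cref{subsec:nullspace} runs this argmin in the transition coordinates $P_{11},P_{12},P_{21}$ with the closures fixed by the normalisations. What literally matches \eqref{eq:recursion} is therefore your argument repeated verbatim with the balance block $B$ in place of $\Amat$. The full-coordinate projector also counts the closure coordinates in the norm and differs from it in general, a point the paper's own proof passes over as well.
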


The proof is in \cref{sec:appA}.

\begin{remark}\label{rem:bridge}
In the fluid formulation the Schr\"odinger bridge adds a Fisher-information penalty to the
Benamou-Brenier energy \citep{Leonard2014,ChenGeorgiouPavon2016,ChenGeorgiouPavon2021}, and
its strong-friction reduction is a first-order relaxation of the same form as the flow in
\cref{lem:prox}.
The correspondence stops at this level.
A term-by-term equality of temporal smoothing with a one-step bridge discretisation would
need the Kramers-Smoluchowski reduction, which sits beyond this paper, so the bridge reading
stays heuristic.
\end{remark}

The optimal-transport and the Schr\"odinger-bridge readings are two least-action problems on
paths of distributions, and their actions differ by the Fisher-information term that entropic
regularisation adds.

\subsection{The connection to the transfer operator}\label{subsec:fp}

The physical instantiation of \cref{sec:physics} rests on a coarse-graining that produces the
same balance structure as the socio-economic model.
Coarse-graining an ensemble replaces the transport of its density by the action of a
transfer operator on cell masses, a standard construction of transfer-operator theory
(\cref{sec:appC}).
In the physical instantiation the dynamics is the deterministic flow of the rapidity
equation, so the transfer operator over a fixed lag is the Perron-Frobenius operator of the
time-$\Delta t$ flow map.

\begin{lemma}[mass conservation]\label{lem:mass}
Let $T\colon X\to X$ be non-singular with respect to a reference measure $m$, and let the
modelled domain $D=\bigcup_{i=1}^{K}B_i\subseteq X$ be partitioned into cells with
$m(B_i)>0$.
The Perron-Frobenius transfer operator of $T$ conserves the mass of densities.
Ulam's matrix on the cells, with entries $Q_{ji}=m(B_i\cap T^{-1}B_j)/m(B_i)$ indexed by the
source cell $i$ down each column, satisfies $\sum_j Q_{ji}=m(B_i\cap T^{-1}D)/m(B_i)\le1$.
The column sum equals one for a cell whose mass stays in $D$, and the deficit
$1-\sum_j Q_{ji}$ is the mass that $T$ carries out of $D$, the leakage into departure.
\end{lemma}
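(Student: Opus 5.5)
The plan has two parts: first the conservation property of the Perron-Frobenius operator, then the column sums of Ulam's matrix. Non-singularity of $T$ means $m(E)=0$ implies $m(T^{-1}E)=0$. For a density $f\in L^1(m)$, $f\ge0$, the set function $E\mapsto\int_{T^{-1}E}f\,dm$ is therefore a finite measure absolutely continuous with respect to $m$. Radon-Nikodym gives a unique $\mathcal P f\in L^1(m)$ with
\[
\int_E \mathcal P f\,dm=\int_{T^{-1}E}f\,dm
\]
for every measurable $E$. Taking $E=X$ and using $T^{-1}X=X$ yields $\int_X\mathcal P f\,dm=\int_X f\,dm$. So $\mathcal P$ is a positive linear operator that conserves the mass of densities; the signed case follows by splitting $f=f^+-f^-$.

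For the column sums, I fix a source cell $B_i$ and use the normalised indicator $f_i=\mathbf 1_{B_i}/m(B_i)$, a density of unit mass. Then $Q_{ji}=\int_{B_j}\mathcal P f_i\,dm=m(B_i\cap T^{-1}B_j)/m(B_i)$, which recovers the defining formula and shows that each entry is the mass that $\mathcal P$ sends from $B_i$ into $B_j$. The cells $B_j$ are pairwise disjoint with union $D$, so the preimages $T^{-1}B_j$ are pairwise disjoint with union $T^{-1}D$. Countable (here finite) additivity of $m$ restricted to $B_i$ then gives
\[
\sum_j Q_{ji}=\frac{m(B_i\cap T^{-1}D)}{m(B_i)}\le\frac{m(B_i)}{m(B_i)}=1 .
\]
The deficit is $1-\sum_jQ_{ji}=m(B_i\setminus T^{-1}D)/m(B_i)$, the normalised measure of the points of $B_i$ that $T$ maps outside $D$. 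The column sum equals one exactly when $m(B_i\setminus T^{-1}D)=0$, that is, when almost all of the mass of $B_i$ stays in $D$. The same deficit, read through the conservation identity, is $\int_{X\setminus D}\mathcal P f_i\,dm$, the mass leaked out of the modelled domain.

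To connect this with the model, I assign the deficit to the departure index $j=n+1$ of \cref{def:mcsystem}. Each column then becomes an outflow simplex of the form \eqref{eq:model_norm1}--\eqref{eq:model_norm2}, with the diagonal entry $Q_{ii}$ playing the role of a retention share.

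The only delicate point is the disjointness of the cells. If the partition is disjoint only up to $m$-null boundaries, then non-singularity is exactly what ensures that the preimages of those boundaries are also null, so the additivity step still holds. Everything else is routine measure theory.
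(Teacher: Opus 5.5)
Your proposal is correct and follows the paper's proof. The paper defines the Perron--Frobenius operator by the duality $\int (Lf)\,g\,dm=\int f\,(g\circ T)\,dm$ and tests it with $g\equiv1$, which is your Radon--Nikodym identity taken at $E=X$; it then gets the column sums from the disjoint preimages $T^{-1}B_j$ whose union is $T^{-1}D$, exactly as you do, and your additions (the identity $Q_{ji}=\int_{B_j}\mathcal P f_i\,dm$ and the null-boundary remark via non-singularity) are sound under the tacit assumptions, shared with the paper, that $m$ is $\sigma$-finite and $0<m(B_i)<\infty$.
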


\begin{theorem}[coarse-graining yields the same balance form]\label{thm:coarse}
Project the transfer operator $P_{\Delta t}$ of the ensemble over a fixed lag $\Delta t$ onto
the indicator functions of the cells of a modelled domain, Ulam's method.
The result is a substochastic matrix whose recovery from two coarse densities is a balance
system of the generic balance-plus-normalisation form $\Amat\Pt=\ncal$, under-determined in
the same way, and $\ker\Amat$ is again spanned by balanced $4$-cycles whenever the admissible
cell transitions satisfy the hypotheses of \cref{lem:kernel}.
The identification of the cells with the active and passive clusters of \cref{def:cluster} is
a modelling convention of the physical instantiation, introduced in \cref{sec:physics}.
\end{theorem}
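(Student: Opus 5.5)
The proof runs in three steps, each resting on an earlier result.

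\emph{Step 1: Ulam's matrix is substochastic.} Write $\Pi$ for the Ulam projection onto the span of the cell indicators $\mathbf 1_{B_i}$, taken as a conditional expectation with respect to $m$. The matrix of $\Pi P_{\Delta t}$ restricted to that span is the matrix $Q$ of \cref{lem:mass}. By \cref{lem:mass} its entries are nonnegative and its column sums are at most one. Hence $Q$ is substochastic, and the deficit of column $i$ is a departure share for source cell $i$. Next append a departure state $K+1$ and, for each source, a closure variable equal to the diagonal retention entry $Q_{ii}$. The columns of $Q$ then become exactly the outflow simplices of \cref{eq:model_norm1}--\eqref{eq:model_norm2}: every source distributes unit mass over the other cells, over departure and over its own retention.

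\emph{Step 2: recovery is a balance-plus-normalisation system.} Let the coarse densities be the cell masses $\mu_i(t)=\int_{B_i}\rho_t\,dm$. Because $P_{\Delta t}$ conserves mass, $\mu_j(t+1)=\sum_i Q_{ji}\mu_i(t)$ plus any exogenous inflow. This identity is linear in the entries of $Q$, with coefficients equal to the observed source masses. Subtract $\mu_j(t)$ and use the normalisation to eliminate the retention entries, exactly as in \cref{sec:vector}. This gives $K$ balance rows in the off-diagonal transitions, with increments on the right-hand side. Adding the $K$ normalisation rows with the closures yields $\Amat\Pt=\ncal$, of the same generic form. The coefficient matrix is again assembled from source masses, and its normalisation rows again partition the column set. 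The arguments of \cref{lem:rank,lem:cone} therefore transfer verbatim under positive cell masses and a departure edge from every source. In particular $\Amat$ has full row rank, and $\dim\ker\Amat$ equals the number of admissible edges minus the number of rows. This quantity is positive once the cell graph has more than a tree's worth of edges, which is the sense of ``under-determined in the same way''.

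\emph{Step 3: kernel structure.} Pass to flow coordinates $z^{(i,j)}=\mu_i(t)\Pt^{(i,j)}$, with positive masses so the rescaling is invertible. In these coordinates $\ker\Amat$ is the set of edge weightings of the source-to-target bipartite graph with zero net flow at every source and every target. That is the cycle space used in \cref{lem:kernel}. I will simply check that the hypotheses of \cref{lem:kernel} are the only input its proof needs: every source has an edge to departure and every target has an incoming edge. Under these hypotheses the spanning tree centred at departure produces the basis of balanced $4$-cycles. The identification of cells with active and passive clusters is declared a convention, so nothing needs to be proved about it. I only note that any such labelling permutes rows and columns and preserves the structure.

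The main obstacle is in Step 2. The Ulam matrix carries its own natural parametrisation, in which the diagonal is a free transition entry. The paper's model, by contrast, treats retention as a closure variable absent from the balance rows. I must check that rewriting the diagonal as a closure leaves the kernel unchanged up to the coordinate map. If diagonal edges are instead kept as ordinary bipartite edges (the retention edges of \cref{lem:kernel}), the two descriptions coincide. I would therefore adopt the bipartite-edge reading from the outset, so that \cref{lem:kernel} applies without modification.
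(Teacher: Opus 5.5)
Your proposal is correct and follows essentially the paper's route: substochasticity from \cref{lem:mass}, the cell masses as the coarse densities, balance rows with the retention eliminated through the normalisations, and the kernel argument of \cref{lem:kernel}, which uses only the generic form and its two graph hypotheses (your explicit transfer of \cref{lem:rank} and your retention-as-bipartite-edge reading make the paper's brief treatment more precise). Two precisions are worth adding: the projected operator has matrix $Q$ only in the mass coordinates $\mathbf 1_{B_i}/m(B_i)$ that the paper uses, since in the plain indicator basis its entries are $Q_{ji}\,m(B_i)/m(B_j)$ and the column sums need not be at most one; and the identity $\mu_j(t+1)=\sum_i Q_{ji}\mu_i(t)$ does not follow from mass conservation alone (it is exact when $\rho_t$ is constant on each cell), but it holds exactly for the fractions $\int_{B_i\cap T^{-1}B_j}\rho_t\,dm/\mu_i(t)$, which are substochastic by the argument of \cref{lem:mass} and yield the same balance form.
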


The proofs are in \cref{sec:appA}.
\Cref{lem:mass,thm:coarse} are the bridge between the two instantiations, and they explain
why the same solver recovers the electron-ensemble operator without a single change
(\cref{sec:physics}).
\Cref{fig:coarse} shows the coarse-graining that carries a phase-space flow into a
substochastic matrix and the balance system it satisfies.

\begin{figure}[t]\centering
  \begin{tikzpicture}[>=Stealth, font=\small]
    \draw (0,0) rectangle (3,3);
    \foreach \y in {1,2} \draw[black!30] (0,\y) -- (3,\y);
    \draw[->,thick] (0.4,2.6) .. controls (1.6,2.3) and (2.1,1.2) .. (2.35,0.78);
    \fill (2.35,0.78) circle (2pt);
    \node[right] at (2.42,0.78) {attractor $\theta^*$};
    \node[below] at (1.5,-0.15) {phase space};
    \draw[->,very thick] (3.5,1.5) -- (4.7,1.5);
    \node[above] at (4.1,1.55) {Ulam};
    \begin{scope}[shift={(5.1,0.3)}]
      \fill[black!5] (0,0) rectangle (2.4,2.4);
      \draw (0,0) rectangle (2.4,2.4);
      \foreach \x in {0.8,1.6} \draw (\x,0) -- (\x,2.4);
      \foreach \y in {0.8,1.6} \draw (0,\y) -- (2.4,\y);
      \node[above] at (1.2,2.4) {$Q$ (substochastic)};
      \node[below] at (1.2,-0.15) {$\Amat\Pt=\ncal$};
    \end{scope}
  \end{tikzpicture}
  \caption{Coarse-graining an ensemble into a transfer operator on cell masses gives the same
  under-determined balance form as the socio-economic model.}
  \label{fig:coarse}
\end{figure}

The control vector $\Rt$ completes the construction.
For a linear $g$ its term is convex and moves the solution only inside $\ket$, so it steers
the unobservable component of the operator while the observed balance stays exact.
This supports what-if analysis of a known forcing.
The experiments below keep $\Rt\equiv 0$.

\section{Numerical algorithms}\label{sec:num}

Both operationalisations of the prior are causal smoothers of the operator series, and both enter the tables below as regularisers.
Null-space smoothing keeps the observed balance exact by moving the operator only inside the null space of the constraints.
Temporal smoothing acts on the whole operator and gives up that guarantee.

\subsection{Null-space smoothing}\label{subsec:nullspace}

The correction of null-space smoothing lives in $\ket$.
The solver \eqref{eq:tikh} returns the unsmoothed operator $\Pt(t)$ for each window, minimum-norm by default with $\bar x=0$, and the smoother acts on top of this series.
It smooths the transition components through the balance block $B(t)$, and the closure variables $P^{*},P^{**}$ follow from the normalisations, so in full coordinates the shift lies in $\ker\Amat(t)$.
The causal projection of a direction $v$ onto $\ker B$ is
\begin{equation}\label{eq:proj-ker}
  \Pi_{\ker}(v)=v-B^{\!\top}(BB^{\!\top})^{+}Bv .
\end{equation}
The pseudoinverse form preserves the balance unconditionally,
\begin{equation*}
  B\,\Pi_{\ker}(v)=Bv-(BB^{\!\top})(BB^{\!\top})^{+}Bv=0 ,
\end{equation*}
because $Bv\in\range(BB^{\!\top})$ for every $v$.
In the transition coordinates $B$ is the $(2n+1)\times m'$ balance block on the transition columns, with $m'=2n^2+3n+1$, and $\dim\ker B=\dim\ker\Amat$ holds there, so the reduction to a square inverse is consistent.
The full row rank of $B$ comes from the invertible diagonal $V$-block in the departure-column argument of the proof of \cref{lem:rank}.
Under full row rank one has $(BB^{\!\top})^{+}=(BB^{\!\top})^{-1}$; a degenerate row, such as an industry with zero population, leaves the same guarantee in force through the pseudoinverse.
The smoothed series is the recursion
\begin{equation}\label{eq:recursion}
  \Pt_s(t)=\Pt(t)+\lambda\,\Pi_{\ker}\bigl(\Pt_s(t-1)-\Pt(t)\bigr) ,
\end{equation}
which preserves $\Amat(t)\Pt_s(t)=\Amat(t)\Pt(t)$ exactly, with a residual of about $10^{-15}$, and changes only the non-identifiable component.
The recursion step is the causal one-step projection
\begin{equation*}
  \Pt_s(t)=\arg\min_{x:\,\Amat(t)x=\Amat(t)\Pt(t)}\bigl[(1-\lambda)\norm{x-\Pt(t)}^2+\lambda\norm{x-\Pt_s(t-1)}^2\bigr] ,
\end{equation*}
the orthogonal projection of the convex combination $(1-\lambda)\Pt(t)+\lambda\Pt_s(t-1)$ onto the data fiber $M(t)$.
The argmin runs in the transition coordinates $P_{11},P_{12},P_{21}$, with the closures $P^{*},P^{**}$ fixed by the normalisations, so the norm above is the one on those coordinates.
This is the constrained proximal step of \cref{lem:prox}, taken forward in time, so the smoothed series uses past observations only.
The weight $\lambda$ carries the dynamic term of \eqref{eq:action}, the attraction to the past operator, and $1-\lambda$ weights fidelity to the fresh solution $\Pt(t)$.
The static term of \eqref{eq:action} does not enter this step.
It acts one level down, through the reference $\bar x$ of \eqref{eq:tikh} that produces $\Pt(t)$, and the default $\bar x=0$ switches it off.

\begin{lemma}[neutrality and leakage]\label{lem:neutral}
(i) For $\delta\in\ker\Amat(t)$ the macro forecast $\Amat(t)\Pt(t)$ is invariant under $\Pt(t)\mapsto\Pt(t)+\delta$.
(ii) The orthogonal projection $\Pi_{M(t)}(x)=x-\Amat^{\!\top}(\Amat\Amat^{\!\top})^{-1}(\Amat x-b)$ onto the fiber $M(t)$ preserves $\Amat(t)\Pi_{M(t)}(\cdot)=b$ exactly.
(iii) An increment $\delta\in\ker\Amat(t-1)$ applied at time $t$ violates the balance by $\Amat(t)\delta=\Delta A\,\delta$ with $\norm{\Amat(t)\delta}\le\norm{\Delta A}\,\norm{\delta}$, where $\Delta A=\Amat(t)-\Amat(t-1)$, so the leakage into the forecast is governed by the drift of the system matrix and vanishes when $\Amat$ is time-invariant.
\end{lemma}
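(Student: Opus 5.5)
The three claims are linear-algebraic and I will prove them in order, each straight from the definitions; none needs more than \cref{lem:rank}.

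For (i), linearity suffices. If $\delta\in\ker\Amat(t)$, then
\[
\Amat(t)\bigl(\Pt(t)+\delta\bigr)=\Amat(t)\Pt(t)+\Amat(t)\delta=\Amat(t)\Pt(t).
\]
The macro forecast is exactly the image under $\Amat(t)$, so it does not change. I will also note that the null-space recursion \eqref{eq:recursion} adds a shift of this kind. Its correction $\lambda\,\Pi_{\ker}(\cdot)$ lies in $\ker B$ on the transition coordinates, and the closures are reset by the normalisations, so in full coordinates the shift lies in $\ker\Amat(t)$. Item (i) therefore covers the smoother.

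For (ii), I invoke \cref{lem:rank}: $\Amat$ has full row rank, so $\Amat\Amat^{\!\top}$ is invertible and $\Pi_{M(t)}$ is well defined. Applying $\Amat$ to it gives
\[
\Amat x-\Amat\Amat^{\!\top}(\Amat\Amat^{\!\top})^{-1}(\Amat x-b)=\Amat x-(\Amat x-b)=b .
\]
To justify calling the map an orthogonal projection, I will check two facts. First, $x-\Pi_{M(t)}(x)$ lies in $\range\Amat^{\!\top}=(\ker\Amat)^{\perp}$. Second, the image lies in $M(t)$. Together these are the characterisation of the nearest point of an affine subspace, which is the same as \cref{thm:bregman}(a) with $\bar x=x$.

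For (iii), I write $\Amat(t)\delta=\Amat(t-1)\delta+\Delta A\,\delta$. The first term is zero because $\delta\in\ker\Amat(t-1)$, which leaves $\Amat(t)\delta=\Delta A\,\delta$. The bound $\norm{\Amat(t)\delta}\le\norm{\Delta A}\norm{\delta}$ then follows from the operator-norm inequality. If $\Amat$ is time-invariant, $\Delta A=0$ and the leakage vanishes.

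The only delicate point is bookkeeping rather than mathematics. The smoother works in the transition coordinates with $B$, while the statement uses the full $\Amat$. I will check that a $\ker B$ shift, completed by the normalisation-determined closures, really lies in $\ker\Amat$. This holds because the closure columns are zero in the balance block, and the normalisation rows are satisfied once the closures absorb the change in each source's outflow. I will also check that $\Delta A$ is measured in the same full coordinates, so that the norms in (iii) are consistent.
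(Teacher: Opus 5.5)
Your proposal is correct and follows the paper's proof essentially step for step. It uses linearity for (i), full row rank from \cref{lem:rank} to invert $\Amat\Amat^{\!\top}$ before applying $\Amat$ for (ii), and the split $\Amat(t)=\Amat(t-1)+\Delta A$ with the operator-norm bound for (iii); your extra checks (that the closure-completed $\ker B$ shift of the smoother lies in $\ker\Amat(t)$, and that $x-\Pi_{M(t)}(x)\in\range\Amat^{\!\top}$ justifies the word ``orthogonal'') are sound but not needed for the lemma as stated.
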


The proof is in \cref{sec:appA}.

The map $\Pi_{\ker}$ is an orthogonal projector, idempotent and symmetric in the transition coordinates.
It annihilates the identifiable row-space part of the shift and touches only the component in $\ket$.
The row-space component of $\Pt_s(t)$ equals that of $\Pt(t)$ exactly, so the smoother leaves the identifiable part of the recovered operator untouched and acts only on the variance in $\ket$.
Leakage into a forecast one step ahead is governed by the drift of the system matrix (\cref{lem:neutral}(iii)) and vanishes when $\Amat$ is time-invariant.

Null-space smoothing keeps the correction inside $\ket$ and does not project onto the non-negative cone, so it can push individual components a little outside non-negativity.
On the real panels these violations are small.
About one percent of components are affected, the largest magnitude is $0.04$, and the balance is unchanged.
They sit in small cross-flows of the non-identifiable component in $\ket$.
Such violations cannot distort a data-determined conclusion.
The observed balance stays exact by \cref{lem:neutral}, and the affected directions are the ones the data leave free.
They share their source with the non-identifiable micro-structure, the freedom the solution manifold leaves open, and carry no separate meaning.
When strict non-negativity is required, an extra projection onto the bounded polytope $M\cap\real^m_+$ (\cref{thm:nonident}) removes them and preserves the marginals exactly, so the balance stays exact.
The reported runs omit this projection, because it breaks the closed-form causal recursion \eqref{eq:recursion} and the clean static and dynamic ablation, while the violations stay below the interpretive resolution of the non-identifiable component.

\subsection{Temporal smoothing}\label{subsec:temporal}

Temporal smoothing is causal exponential smoothing of the operator,
\begin{equation}\label{eq:ewma}
  \Pt_s(t)=(1-\lambda)\Pt(t)+\lambda\,\Pt_s(t-1) ,
\end{equation}
an analogue of \citet{BrandShare2017}.
It mixes the full operator and does not preserve the balance, so it serves as a baseline.
On the labour panel the balance residual is about $12\%$, rising to about $60\%$ in the physics instantiation of \cref{sec:physics}, against about $10^{-15}$ for null-space smoothing.

\subsection{Selecting the weight and cost}\label{subsec:lambda}

The weight $\lambda$ is chosen causally, by expanding-window cross-validation, a discrete analogue of the L-curve and generalised cross-validation \citep{Hansen1992,GolubHeathWahba1979}.
On the labour panel of eight yearly snapshots this choice returns $\lambda\to0$ almost everywhere, which switches the smoothing off, so these data carry no forecasting signal in the non-identifiable component.
For comparability the tables below also report the default $\lambda=0.5$, an interior point at which null-space smoothing keeps the balance exact and, on the synthetic map of \cref{sec:applic}, never raises the forecast error.

The projection is cheap.
Only $BB^{\!\top}$ of size $(2n+1)\times(2n+1)$ is inverted, which is $41\times41$ at $n=20$, and the $(2n^2+n)$-dimensional kernel is never materialised.
The block $B$ carries $O(1)$ nonzeros per column, so $BB^{\!\top}$ is formed in $O(n^2)$ from the sparse $B$ and inverted in $O(n^3)$; forming it from a dense $B$ would cost $O(n^4)$.
The per-step cost is $O(n^3)$ in the number of clusters.
As a diagnostic, the trajectory action $\sum_t\norm{\Pt(t)-\Pt(t-1)}^2$ measures the inertia of the rule, and both smoothers reduce it.

\section{Computational example: the Russian labour market}\label{sec:labour}

The first instantiation is the Russian labour market, and all verification here is causal.
The window expands over the yearly snapshots and each forecast uses only earlier data (rolling origin) \citep{Tashman2000}.
Four numbers summarise a forecast.
The mean absolute error (MAE) reports level accuracy.
The reliability $\eta_2$ is the share of the $2n+1$ indicators whose relative error stays at or below $2\%$.
It rewards a low dispersion of errors across the indicators, and it does not track the mean.
The correct sign share reports how often the forecast calls the direction of change.
The balance residual reports how far the recovered flows depart from the observed marginal constraint.
Two residuals are kept apart.
The solver residual $\norm{\Amat x-\ncal}/\norm{\ncal}$ measures fit accuracy.
The balance residual $\norm{\Amat\Pt_s-\Amat\Pt}/\norm{\Amat\Pt}$ measures how much a smoother disturbs the balance that the solver has already reached.
The regularisers compared are the raw minimum-norm solution, temporal smoothing (exponentially weighted moving average, EWMA), and null-space smoothing with weight $\lambda$; the operator components are extrapolated by trends under a common scheme (one trend for all components), with per-component and low-rank variants reported for robustness.

The model is the author's multicluster labour balance \citep{DrobotenkoNevecherya2021,DrobotenkoNevecherya2023}, built on the balance approach to labour-resource movement of \citet{Korovkin2001}.
The clusters are labour states.
$N_1$ is the count employed by industry, $N_2$ is the count unemployed by their last industry, and $N_2^{(0)}$ is the reserve of those entering and leaving the labour force.
Industries follow the national classifier OKVED.
The 2017 switch from OKVED-1 (12 industries) to OKVED-2 (20 industries) splits the series into a stable segment (Rosstat, 2005--2017) and a turbulent one (2017--2024) that contains the 2020--2022 shock.
Rosstat publishes employment by industry but not unemployment.
Aggregate unemployment is therefore split across industries in proportion to worker separations, by the method of \citet{UnempStruct2018}, and the resulting proxy agrees with the direct data where both exist (industry-share correlation $r=0.88$).
The error of this split enters as data uncertainty in the right-hand side $\ncal$.
The recovery preserves whatever balance the supplied $\ncal$ carries, so the proxy error propagates to the recovered operator as an ordinary data error and leaves the balance-exact property intact.
The separations used for the split are the same active-to-passive flows the operator recovers, so the imputed right-hand side is not independent of the recovered flows.
The identified aggregate flows therefore inherit the structure of the imputation model, and the $r=0.88$ check rests only on the subset of years with direct data.
Part of the statistical insignificance reported below may reflect this data noise together with the short series.

The abstract states of \cref{sec:model} take a concrete labour reading.
The classifying state $S_1^{(i)}$ is employment in industry $i$ under the OKVED-2 nomenclature, so $n=20$.
\Cref{tab:okved2} lists the twenty industries.
The professional, scientific and technical section is split into two, industry 13 excluding research and development and industry 14 for research and development, which fixes the count at twenty and drops the unstable extraterritorial category (Rosstat labour-force statistics).
The active cluster $S_1^{(i)}(t)$ is the set of workers employed in industry $i$ at time $t$.
The passive cluster $S_2^{(i)}(t)$ is the set of unemployed whose last employment was industry $i$.
The reserve cluster $S_2^{(0)}(t)$ holds those in the labour force with no recorded prior industry, the entrants and the never-classified.
The transition indicators read as labour flows \citep{DrobotenkoNevecherya2021,DrobotenkoNevecherya2023}.
$P_{11}$ is an industry-to-industry job change, a worker moving from employment in industry $i$ to employment in industry $j$.
$P_{12}$ is job loss, the move from the active cluster of industry $i$ to the passive cluster of the same index.
$P_{21}$ is hiring from unemployment or from the reserve into an industry.
The departure channel $j=n+1$ is leaving the labour force.
The retention shares carry the two persistences, $P^{*}$ the unemployment persistence of a passive cluster and $P^{**}$ the employment retention of an active cluster.

\begin{table}[t]
\refstepcounter{table}\label{tab:okved2}%
{\centering\footnotesize\bfseries Table~\thetable. The $n=20$ classifying states: employment by
OKVED-2 industry\par}
\smallskip
\centering\footnotesize\setlength{\tabcolsep}{4pt}
\begin{tabular}{|r|l|r|l|}
\hline
$i$ & Industry & $i$ & Industry \\ \hline
1 & Agriculture, forestry, hunting and fishing & 11 & Financial and insurance activities \\ \hline
2 & Mining and quarrying & 12 & Real estate activities \\ \hline
3 & Manufacturing & 13 & Professional, scientific and technical (excl.\ R\&D) \\ \hline
4 & Electricity, gas and steam supply & 14 & Scientific research and development \\ \hline
5 & Water supply, sewerage and waste & 15 & Administrative and support services \\ \hline
6 & Construction & 16 & Public administration and defence \\ \hline
7 & Wholesale and retail trade; vehicle repair & 17 & Education \\ \hline
8 & Transportation and storage & 18 & Human health and social work \\ \hline
9 & Accommodation and food service & 19 & Arts, entertainment and recreation \\ \hline
10 & Information and communication & 20 & Other service activities \\ \hline
\end{tabular}
\end{table}

The main verification period is the turbulent one.
Eight yearly OKVED-2 snapshots 2017--2024 give five holdouts 2020--2024 under the expanding window.
The headline result is deterministic and structural.
Null-space smoothing keeps the observed balance at machine zero, about $10^{-15}$, while temporal smoothing corrupts it by about $12\%$ (\cref{tab:baselines}, \cref{fig:ris2}b).
Null-space smoothing also keeps the reliability of the unsmoothed solution, $0.302$, while temporal smoothing lowers it to $0.268$ and the naive forecast sits at $0.273$ (\cref{fig:ris2}a).
On mean error the operator methods do not surpass the naive forecast, as expected for inertial stocks \citep{MeeseRogoff1983,AtkesonOhanian2001}.

\begin{figure}[t]\centering
  \includegraphics[scale=0.846]{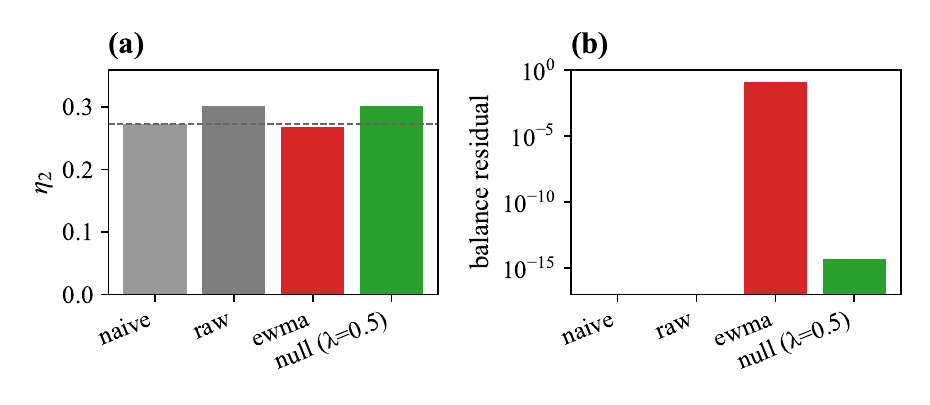}
  \caption{Labour market (OKVED-2). (a)~Reliability $\eta_2$, with the naive forecast as the
  dashed reference. (b)~Balance residual. Temporal smoothing corrupts the balance by about
  $12\%$; null-space smoothing holds it near $10^{-15}$.}
  \label{fig:ris2}
\end{figure}

A high retention default is also economically grounded.
The Russian labour market absorbs shocks mainly through wages and hours, a labour-hoarding response \citep{GimpelsonKapeliushnikov2011}, so industry self-retention is large and gross flows far exceed net ones \citep{DavisHaltiwangerSchuh1996,Shimer2005}.
The model is stock-flow consistent.
At every instant $\sum_i N_2^{(i)}+N_2^{(0)}=N_2$ and the operator conserves the head count, which is the economic reading of exact balance preservation.

A fair reading of point accuracy compares the prior against standard univariate baselines on the $2n+1$ indicator series, not against the naive forecast alone.
The baselines are a random walk with drift, a stationary AR(1), Holt smoothing (ETS), and a constant-share (shift-share) structural forecast; see \cref{tab:baselines}.
The baselines forecast the indicators directly, while the operator methods obtain the indicators from the recovered operator, an asymmetry that works against the prior.
The operator methods carry the safe default $\lambda=0.5$, since causal cross-validation on this panel switches the smoothing off and returns the raw solution.
The indicators span several orders of magnitude, so MAE is level-dominated; the scale-free sMAPE and RelMAE (error relative to the naive forecast, averaged across indicators) accompany it.
The RelMAE column scales each indicator error by the same-period naive error, so the naive forecast equals $1.00$ by construction; this same-period scaling can inflate when an indicator barely moves, so the ratios are read with that caveat.
This out-of-sample scaled error follows \citet{HyndmanKoehler2006}, and the sMAPE follows \citet{Makridakis1993}.
On both MAE and RelMAE the nominal minimum belongs to the random walk with drift ($46.4$, $\mathrm{RelMAE}=0.94$).
It is the only method below unity on RelMAE, and every operator method has $\mathrm{RelMAE}\ge1$.

\begin{table}[t]
\refstepcounter{table}\label{tab:baselines}%
{\centering\footnotesize\bfseries Table~\thetable. Baselines and operator methods (OKVED-2,
turbulent period, rolling origin, 5 holdouts)\par}
\smallskip
\centering\footnotesize\setlength{\tabcolsep}{4pt}
\begin{tabular}{|l|c|c|c|c|c|}
\hline
Method & MAE & sMAPE\% & RelMAE $\downarrow$ & $\eta_2$ & Sign \\ \hline
Naive (persistence)            & 48.7 & 11.6 & 1.00 & 0.273 & n/a \\ \hline
Random walk with drift         & 46.4 & 10.3 & 0.94 & 0.298 & 0.663 \\ \hline
AR(1) (stationary)             & 65.4 & 15.5 & 1.40 & 0.229 & 0.332 \\ \hline
ETS (Holt)                     & 53.5 & 12.8 & 1.12 & 0.283 & 0.580 \\ \hline
Shift-share (constant share)   & 50.3 & 11.0 & 1.02 & 0.259 & 0.537 \\ \hline
Raw (min-norm)                 & 51.8 & 12.2 & 1.08 & 0.302 & 0.595 \\ \hline
Ewma (temporal smoothing)      & 54.2 & 11.0 & 1.15 & 0.268 & 0.571 \\ \hline
Null ($\lambda{=}0.5$), proposed & 54.5 & 12.8 & 1.13 & 0.302 & 0.537 \\ \hline
\end{tabular}
\par\smallskip
{\raggedright\footnotesize Note. Only the random walk with drift falls below unity on RelMAE,
and its edge over the naive forecast is statistically insignificant. The naive forecast has
no defined change sign (persistence). Balance residual of the operator methods: raw $0$, ewma
$1.2\times10^{-1}$, null $5.2\times10^{-15}$; null-space smoothing preserves the balance to
machine zero.\par}
\par\smallskip
{\raggedright\footnotesize Diebold-Mariano note (exploratory). The Diebold-Mariano statistic
with the Harvey-Leybourne-Newbold small-sample correction \citep{DieboldMariano1995,%
HarveyLeybourneNewbold1997}, computed against the naive forecast on the holdout MAE over the
$n=5$ one-step-ahead evaluation points, gives: raw $p=0.542$, ewma $p=0.360$, null $p=0.447$,
random walk with drift $p=0.688$, AR(1) $p=0.192$, ETS $p=0.474$, shift-share $p=0.720$. These
values are exploratory. With only $n=5$ evaluation points the asymptotic null of the test is
unreliable even with the small-sample correction, so the valid verdict rests on the
within-holdout permutation test reported below. The comparison against the naive forecast is
nested, for which \citet{ClarkWest2007} is the standard alternative. \citet{Diebold2015}
cautions against Diebold-Mariano use in small samples, and \citet{GiacominiWhite2006} give a
conditional predictive-ability test suited to this rolling-window setting.\par}
\end{table}

The significance battery on the five holdouts does not establish an advantage in point accuracy.
A within-holdout sign-flip permutation test respects the cross-indicator dependence, a cluster bootstrap \citep{EfronTibshirani1993} assesses the reliability gain, and Holm-Bonferroni \citep{Holm1979} controls multiplicity.
No method shows a significant advantage over the naive forecast in point accuracy at any conventional level.
The within-holdout permutation test is two-sided, and the smallest $p$ of $0.19$ belongs to the stationary AR(1), whose large difference lies in the adverse direction and is worse than the naive forecast, so it counts as no advantage.
The random walk with drift is the only method that beats the naive forecast on point accuracy, and its edge is insignificant ($p=0.75$).
The cluster bootstrap of the reliability gain of null-space smoothing over the naive forecast returns a $95\%$ interval covering zero, so the test neither confirms nor refutes an effect, and the width of this interval is not read as evidence.
The directional component reaches significance only nominally.
The Pesaran-Timmermann test \citep{PesaranTimmermann1992} is one-sided for positive directional skill.
Pooling the $205$ directional calls, it flags the random walk with drift ($p<10^{-3}$).
Read on the two-sided absolute statistic, the same pool also flags the anti-skilled AR(1) as significant in the wrong direction ($p<10^{-3}$), which exposes the pooling as unreliable.
The pool treats the $41$ indicators at each of the five moments as independent.
Clustering by year, appropriate under the cross-indicator dependence and mindful of the small number of clusters \citep{CameronGelbachMiller2008}, removes the edge: the yearly correct-sign shares are heterogeneous, a year-cluster bootstrap interval covers $0.5$, and a year-block permutation is insignificant.
No method retains a significant directional edge.

The one deterministic advantage of the prior is the exact preservation of the observed balance, to about $10^{-15}$, while it recovers an interpretable transfer operator.
Univariate baselines cannot provide this in principle, since they do not recover the operator.
The value of the prior on this panel is therefore structural and theoretical.
On the stable OKVED-1 the structure yields no MAE gain over the naive forecast, temporal smoothing rescues the over-fitting per-component scheme, and the smaller indicator count (25 against 41) by itself eases reliability.
The point recovered operator is non-identifiable, and all macro conclusions are invariant to the choice of micro-structure.

\section{Computational example: a relativistic electron ensemble}\label{sec:physics}

The second instantiation is physical, and it carries the same model.
\Cref{thm:coarse} is the rigorous reason the same solver applies without modification.
The ensemble's density transports deterministically, and coarse-graining that transport into a Perron-Frobenius operator, by Ulam's method \citep{Ulam1960,DellnitzJunge1999,Froyland2014}, yields the same under-determined balance form $\Amat\Pt=\ncal$ as the stock-flow balance of the labour market.
Here the clusters are built from phase-space cells.
The rapidity axis is partitioned into $n=8$ cells, and these cells are the classifying states $S_1^{(i)}$ of \cref{def:states}.

We consider an ensemble of electrons in an intense laser field with radiation reaction of Landau-Lifshitz type.
The state is the rapidity $\theta=\operatorname{artanh}(v/c)$ \citep{AkintsovSymmetry2024,AkintsovPoP2025,AkintsovRPJ2024}, which evolves by the autonomous equation
\begin{equation}\label{eq:rapidity}
  \dot\theta=\alpha\tanh\theta-\varepsilon_{\mathrm{rad}}\sinh\theta\cosh\theta,
\end{equation}
where $\alpha\approx0.36$ is the secular phase-synchronised drive rate, which is intensity-independent, and $\varepsilon_{\mathrm{rad}}=(2r_e\omega/3c)\,a_0$ is the radiation-reaction parameter with $a_0=eE/(m\omega c)$ the normalised field amplitude.
The symbol $\alpha$ here is unrelated to the regularisation parameter.
The radiation-dominated regime is $a_0\gg1$.
The stable attractor is $\theta^*=\operatorname{arccosh}\sqrt{\alpha/\varepsilon_{\mathrm{rad}}}$; at $a_0=857$ this gives $\theta^*=5.93$ and $\gamma_L=\cosh\theta^*\approx188$ (about $96$~MeV).
The linear contraction rate toward the attractor has the closed form
\begin{equation}\label{eq:lambda}
  \Lambda=-\frac{\partial\dot\theta}{\partial\theta}\bigg|_{\theta^*}
  =\varepsilon_{\mathrm{rad}}\cosh 2\theta^*-\frac{\alpha}{\cosh^2\theta^*}
  =2(\alpha-\varepsilon_{\mathrm{rad}})\approx2\alpha=0.72,
\end{equation}
so it is set by the drive and is nearly intensity-independent.
The attractor at $\gamma_L\approx188$ sits far below the phase-locked drift ceiling $\gamma_{\max}\approx a_0=857$ for this configuration, which is the signature of radiation dominance.
At these intensities ($I\sim10^{24}$~W/cm$^2$) the quantum parameter $\chi\sim\gamma_L E/E_{\mathrm{cr}}$ reaches about $0.1$ in the phase-locked geometry, and up to about $0.4$ in the lab frame, where $E_{\mathrm{cr}}\approx1.3\times10^{18}$~V/m is the Schwinger critical field.
The classical Landau-Lifshitz reduction is the $\chi\to0$ limit.
At $\chi$ near $0.1$ the quantum reduction of the emitted power lowers the effective radiation-reaction parameter and shifts the mean attractor energy $\gamma_L$ by order twenty percent, while the contraction rate $\Lambda\approx2\alpha$ is set by the drive and stays quantum-insensitive.
The reproduced rate is therefore the quantum-insensitive observable, and $\gamma_L$ and $\theta^*$ are order-of-magnitude.
Quantum-stochastic broadening of the attractor is left unmodelled.

The flow is dissipative.
The coarse-grained mass is conserved while the phase-space volume is not, since the Perron-Frobenius transfer operator preserves the mass of densities by construction and encodes damping in the concentration of the density at $\theta^*$.
A finite partition resolves that concentration as a non-uniform distribution over the cells.
The eight cells span the rapidity axis from $\theta=0.8$ to the field ceiling $\theta_{\max}=\operatorname{arccosh} a_0\approx7.45$ in equal steps.
The state of an electron is its cell together with its drift status.
The active cluster $S_1^{(i)}(t)$ holds the electrons that occupy cell $i$ and still drift, those outside a settling band $|\theta-\theta^*|<0.45$ around the attractor.
The passive cluster $S_2^{(i)}(t)$ holds the electrons that have settled inside that band, indexed by the cell they last drifted in, which is the assignment of \cref{def:subsystems}.
Settled electrons concentrate at $\theta^*$, so the passive clusters carry most of their mass there.
This assignment of cells to the active and passive structure is a modelling convention of the physical instantiation, and \cref{thm:coarse} delivers the balance form without it.
$P^{**}$ is the active retention and $P^*$ the passive retention.
The reserve $S_2^{(0)}$ holds a sub-relativistic population below the lowest cell, and it enters the balance through the channels $P_{21}^{(0,\cdot)}$ of \cref{sec:model}.
The same solver recovers the coarse-grained transfer operator from two rapidity snapshots, the recovery having the same balance form (\cref{thm:coarse}).
The relaxation gives a non-stationary series of eight operators with solver residual $1.4\%$, and direct integration of the ensemble is the ground truth.
The recovered attractor cluster is populated and retains its mass ($P^*=0.924\pm0.026$ over the eight windows, and $0.91$ to $0.98$ under variation of the settling threshold and reservoir size), a qualitative signature that mass has concentrated at $\theta^*$.
The passive retention is tied to the non-identifiable outflows through the normalisation, so its value is read as a qualitative signature of the concentration at $\theta^*$.
The inertial reference $\bar x=P_0$ shifts the unobservable component of the operator to the absorbing attractor, raising the active-cluster retention $P^{**}$ from $0.114$ to $1.000$ (\cref{fig:ris3}a).
This illustrates the controllability of the non-identifiable component, since the reference $\bar x=P_0$ is chosen.
The balance-exact property is universal here too, with balance residual near $10^{-15}$ for null-space smoothing against $0.60$ for temporal smoothing (\cref{fig:ris3}b, \cref{tab:phys}).

The relaxation rate admits two estimates of a different kind.
From aggregates the gap is non-identifiable.
The gap converted to a rate $\Lambda_{\mathrm{rec}}=-\ln\rho^*/\Delta t$, with $\rho^*$ the second-largest eigenvalue modulus, is a property of the unobservable component.
Along the solution manifold (for the physics window $\dim\ker\Amat=137$, since a degenerate row on this window drops the rank of $\Amat$ to $33$ and lifts the defect one above $2n^2+n=136$) at a fixed fit residual (about $3.7\times10^{-3}$ for the window shown, distinct from the $1.4\%$ mean residual above) the gap sweeps the whole range from $0$, a representative with zero gap, to about $3.2\Lambda$.
The minimum-norm solution gives a representative with an artefact gap ($\rho^*\approx0.30$ at $\Delta t=0.6$, that is $\Lambda_{\mathrm{rec}}\approx2.8\Lambda$), and the scaling $\Lambda_{\mathrm{rec}}\propto1/\Delta t$ marks the gap as a property of the representative, since a physical rate would not depend on $\Delta t$.
This is a physical instance of the non-identifiability proved above: two adjacent snapshots do not identify the operator's spectral gap.
The micro-trajectory estimate returns the physical rate.
Given paired transitions of the same electrons between cells, which the aggregate setting does not provide, Ulam coarse-graining \citep{Ulam1960,DellnitzJunge1999,Froyland2014} recovers the gap.
In the resolved window (per-step displacement about two cell widths, $\Delta t=1.2$) it gives $\Lambda_{\mathrm{Ulam}}=0.68\pm0.02$, a ratio of $0.94$ to the analytic $\Lambda=0.72$, tending to $1$ as the ensemble concentrates at $\theta^*$.
The match tests the coarse-graining procedure on data from a known model, and it marks the information boundary of aggregate observation: what the micro-transitions carry, the aggregates do not recover.

\begin{table}[t]
\refstepcounter{table}\label{tab:phys}%
{\centering\footnotesize\bfseries Table~\thetable. Physics instantiation: recovery of the
electron-ensemble operator (radiation-dominated regime, $a_0=857$) and the bridge to the
dissipative attractor\par}
\smallskip
\centering\footnotesize\setlength{\tabcolsep}{4pt}
\begin{tabular}{|p{0.58\textwidth}|c|}
\hline
Quantity & Value \\ \hline
Solver residual $\norm{\Amat x-\ncal}/\norm{\ncal}$ (mean / max) & $1.4\%\,/\,2.4\%$ \\ \hline
Attractor retention $P^*$ (over 8 windows) & $0.924\pm0.026$ \\ \hline
Balance-exact residual, null ($\lambda{=}0.5$ / $\lambda{=}1$) & $2.7\times10^{-15}\,/\,5.7\times10^{-15}$ \\ \hline
Balance residual, ewma ($\lambda{=}0.5$) & $0.60$ \\ \hline
Bridge: $P^{**}$ at $\bar x=0\to\bar x=P_0$ ($\ker\Amat$ controllability) & $0.114\to1.000$ \\ \hline
Rate from micro-transitions (Ulam, $\Delta t{=}1.2$) / analytic $\Lambda{=}2(\alpha-\varepsilon_{\mathrm{rad}})$ & $0.68\pm0.02\,/\,0.72$ \\ \hline
Aggregate-recovery gap (along $\ker\Amat$) & non-ident.: $\Lambda_{\mathrm{rec}}\in[0,\,3.2\Lambda]$ \\ \hline
\end{tabular}
\end{table}

\begin{figure}[t]\centering
  \includegraphics[scale=0.846]{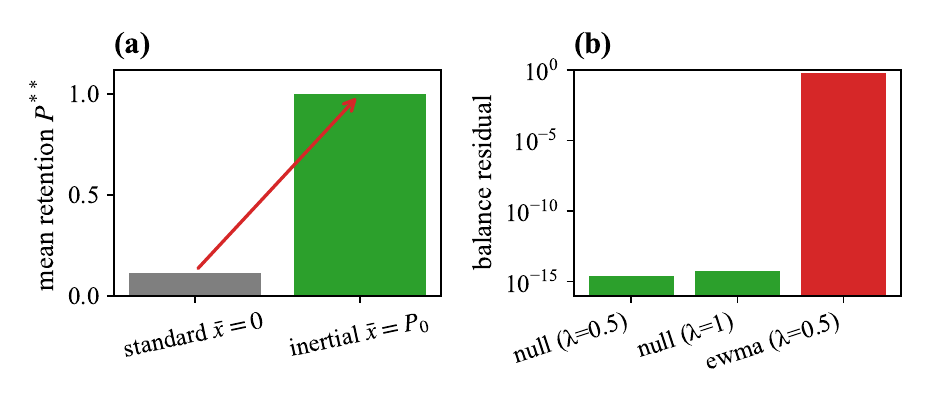}
  \caption{Physics instantiation. (a)~The inertial reference $\bar x=P_0$ raises $P^{**}$ from
  $0.114$ to $1.0$, the controllability of the unobservable component. (b)~Balance preservation
  to machine zero for null-space smoothing against temporal smoothing.}
  \label{fig:ris3}
\end{figure}

\section{Applicability map and robustness}\label{sec:applic}

The regime where the prior is useful is read from a controlled synthetic with the real balance structure and a known ground truth (\cref{fig:ris4}).
The axes are the operator trend $\delta$ and the recovery jitter $\rho$.

\begin{definition}[operator trend]\label{def:trend}
The operator trend $\delta$ is the magnitude of the systematic per-step change of the true rule.
In the controlled model the true operator moves along a fixed unit-norm direction $v$,
$\Pt^{\mathrm{true}}(t)=\Pt^{\mathrm{true}}(0)+t\,\delta\,v$, so
$\delta=\norm{\Pt^{\mathrm{true}}(t)-\Pt^{\mathrm{true}}(t-1)}$.
A value $\delta=0$ is a stationary rule.
A small $\delta$ is a behaviourally inertial system.
\end{definition}

\begin{definition}[recovery jitter]\label{def:jitter}
The recovery jitter $\rho$ is the relative amplitude of the per-step observation noise on the levels $N(t)$.
From the noisy levels a valid balance matrix $\Amat(t)$ is built, with the same structure as in the problem statement, and the operator is recovered from it.
The noise rotates $\ker\Amat(t)$, so the minimum-norm representative recovered from each snapshot random-walks in the unobservable subspace even when the true rule is unchanged ($\delta=0$).
In the full model the same rotation is carried by the natural evolution of $N(t)$ between steps ($\Delta A$, \cref{lem:neutral}).
This is the null-space noise that null-space smoothing removes.
\end{definition}

On the map null-space smoothing is independent of the trend, with error ratio at most one in every cell scanned.
By construction it touches only the unobservable component, a shift in $\ker\Amat$, and it leaves the data-consistent part untouched, the marginals and the clusters' in and out flows.
It can therefore suppress the jitter $\rho$ without biasing the systematic trend $\delta$.
Temporal smoothing averages the whole operator, including the observable part.
It yields a larger gain at weak drift and carries a harmful zone, up to $1.8$ times worse, at strong drift (large $\delta$) and weak jitter (small $\rho$), where lagging the trend outweighs the noise suppression.
The quality of the causal choice of $\lambda$ is assessed against an ideal.
The ideal $\lambda$ is chosen with hindsight on the control moments, minimising the forecast error on them, unavailable in practice and an upper bound on the achievable.
The causal choice nearly attains this bound, error ratio $1.06$ for null-space, and it beats the unsmoothed forecast.

\begin{figure}[t]\centering
  \includegraphics[scale=0.846]{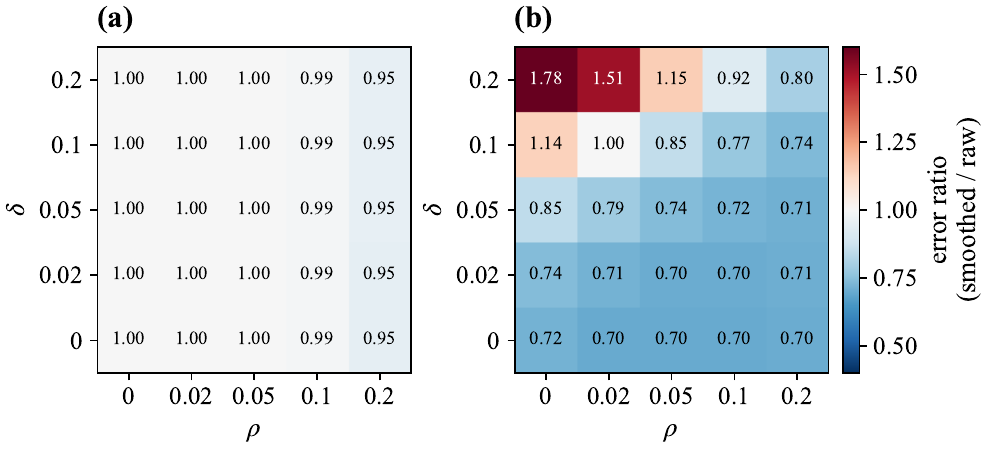}
  \caption{Applicability map (error ratio smoothed over raw; below one is beneficial; axes
  $\delta$, $\rho$). (a)~Null-space smoothing is safe everywhere. (b)~Temporal smoothing harms
  at strong $\delta$ and weak $\rho$.}
  \label{fig:ris4}
\end{figure}

Two causal null-space smoothers isolate the effect of an anchor and of temporal coordination (\cref{tab:ablation}).
The anchor pulls the operator toward the window-mean operator, and the coordination term pulls it toward the previous smoothed operator, which is the dynamic term of \eqref{eq:action}.
Both are neutral on reliability, $0.302$ for the anchor, as for raw, and $0.298$ for the coordination term.
On MAE the coordination term is preferable to the anchor, $54.41$ against $55.11$.
Both sit at a machine-zero balance residual.
Neither term yields a forecasting gain on this panel, in agreement with the outcome of the $\lambda$ cross-validation.

\begin{table}[t]
\refstepcounter{table}\label{tab:ablation}%
{\centering\footnotesize\bfseries Table~\thetable. Ablation of the anchor and coordination terms
(OKVED-2, common scheme, rolling-origin mean)\par}
\smallskip
\centering\footnotesize\setlength{\tabcolsep}{6pt}
\begin{tabular}{|l|c|c|c|}
\hline
Variant & MAE $\downarrow$ & $\eta_2\uparrow$ & Balance resid. \\ \hline
Raw                        & 51.76 & 0.302 & $0$ \\ \hline
Anchor (window mean)       & 55.11 & 0.302 & $1.3\times10^{-14}$ \\ \hline
Coordination (prev.\ op.)   & 54.41 & 0.298 & $2.0\times10^{-14}$ \\ \hline
\end{tabular}
\par\smallskip
{\raggedright\footnotesize Note. Both terms are neutral on reliability, and on MAE the
coordination term is preferable to the anchor. Each variant is a null-shift in $\ker\Amat$ at
$\lambda=1$, the anchor toward the window-mean operator, the coordination term toward the
previous $\Pt_s(t-1)$.\par}
\end{table}

The internal levers of recovery do not move the micro-structure.
The recovered retention is invariant to the reference vector, $\bar x\in\{0,P_0\}$, and to $\alpha\in[10^{-3},1]$, giving $0.083$ on OKVED-2 and $0.103$ on OKVED-1.
A substantive structure requires an external null-space move, as the non-identifiability analysis shows.
The reference vector is negligible here.
In the physical realisation $\bar x=P_0$ changes retention substantially.
The geometry of the data sets this difference; the formulation does not.
On the labour panel the data-consistent part dominates and the feasible polytope is wide, so the references $\bar x=0$ and $\bar x=P_0$ project to nearly the same operator; in the physics window the narrower feasible geometry lets the $P_0$ reference carry the unobservable component to the attractor.
The structural result is robust to the solver settings.
On the grid $\alpha\in\{10^{-3},10^{-2},10^{-1}\}$ crossed with $\tau\in\{10^{-3},10^{-4}\}$, with $\tau$ the solver convergence tolerance, the null-space reliability equals $0.302$ at every node but one, where it is $0.293$, at a balance residual near $10^{-14}$.

On the turbulent OKVED-2 a lower trajectory action goes with a lower MAE for null-space.
The correlation is $+0.55$ between the trajectory-action ratio (smoothed over raw) and MAE, pooled over extrapolation schemes, $\lambda$ and holdouts.
The link to reliability is weak on the extended panel, $+0.12$.
On the stable OKVED-1 there is no relationship, since there is nothing for the structure to catch.

\begin{remark}[James-Stein analogy]\label{rem:jamesstein}
The effect is statistical.
By analogy with the James-Stein effect \citep{JamesStein1961}, shrinkage toward the nearby operator $\Pt(t-1)$ reduces variance in the regime of small per-step change of the rule relative to the noise.
This serves as a guide only.
The James-Stein premises, unconditional Gaussian estimation and a fixed target, fail on the cone with a moving target, so no strict domination is claimed.
\end{remark}

A correct uncertainty estimate rests on the causal rolling-origin residuals, the out-of-sample forecast errors.
The spread of the extrapolation schemes does not calibrate, since their forecasts are too concordant and miss together, so coverage, the fraction of truths inside the interval, is only about $9\%$ at nominal $80$ to $90\%$.
The scheme-spread interval is the envelope of the point forecasts across the extrapolation schemes, so it carries no nominal-level parameter and its coverage stays at $0.091$ for every requested level.
The residual-based intervals are calibrated.
On synthetic data ($200$ realisations) they give per-indicator coverage of $0.834$ and $0.916$ at nominal $0.80$ and $0.90$ (\cref{tab:calib}).
Residual-based rolling-origin intervals are calibrated; the spread of extrapolation schemes is not.
All runs are deterministic given the seeds, and the solver settings and library versions are fixed in the published source code \citep{NevecheryaCode2026}.

\begin{table}[t]
\refstepcounter{table}\label{tab:calib}%
{\centering\footnotesize\bfseries Table~\thetable. Calibration of prediction intervals
(synthetic data, 200 realisations): residual-based intervals against the spread of schemes\par}
\smallskip
\centering\footnotesize\setlength{\tabcolsep}{6pt}
\begin{tabular}{|c|c|c|}
\hline
Nominal coverage & Residual-based & Scheme spread \\ \hline
0.80 & 0.834 & 0.091 \\ \hline
0.90 & 0.916 & 0.091 \\ \hline
\end{tabular}
\par\smallskip
{\raggedright\footnotesize Note. Per-indicator (marginal) coverage. The residual-based
rolling-origin intervals are calibrated near the nominal level, while the spread of the
extrapolation schemes covers far too little.\par}
\end{table}

\section{Discussion}\label{sec:disc}

The prior applies to systems with an inertial, slowly time-varying rule of evolution, a weak external forcing, and a closed mass balance.
Labour stocks, epidemics between waves, consumer preferences and dissipative plasma all sit in this class.
Mixing and chaotic systems sit outside it, and there the opposite maximum-entropy principle holds.
The remainder separates what the paper proves, the heuristic criterion for placing a system in the class, and the empirical limits of the labour study.

What the paper proves concerns the geometry alone.
The data determine only a manifold of operators of dimension $2n^2+n$ (\cref{lem:rank,lem:kernel}), so the spectral gap of a single recovered operator is not data-determined.
Along $\ker\Amat$ at an unchanged residual the gap sweeps the whole range $[0,\,3.2\Lambda]$ in the physics instantiation (\cref{tab:phys}).
The inverse problem is ill-posed.
Under a source condition $x^\dagger-\bar x=\Amat^{\!\top}w$ the standard estimate \citep{EnglHankeNeubauer1996} gives $\norm{x_\alpha-x^\dagger}=O(\sqrt\alpha)$, and the conditioning on the manifold is governed by the smallest non-zero singular value of $\Amat$.
Null-space smoothing keeps the balance neutral every period (\cref{lem:neutral}).

The heuristic criterion decides whether a given system belongs to the inertial class before any recovery.
The guide is the spectral gap $\gamma=1-\rho^{*}$, with $\rho^{*}$ the second-largest eigenvalue modulus, whose reciprocal $1/\gamma$ is the relaxation time in operator steps, up to a logarithmic factor \citep{LevinPeres2017}.
A relaxation time long against the forecast horizon marks a slow-mixing system that keeps the memory of its configuration, and there the inertial high-retention representative is appropriate.
A short relaxation time marks a fast-mixing system, and there maximum entropy is appropriate.
The apparent circle, that the gap is needed to choose the prior yet is itself computed from the recovered operator, does not arise.
The gap used for classification is read from the slow relaxation of the observed counts $N(t)$ toward equilibrium over a long trajectory, which a classical fact of spectral theory ties to the subdominant eigenvalue of the true operator \citep{LevinPeres2017}.
This coarse estimate is data-determined, available before recovery, and independent of the prior.
It is a different object from the per-window operator gap, which two adjacent snapshots do not identify.
In the short physics run the micro-trajectory estimate certified the rate, and the observed-series read was not used there.
An observed relaxation with no slow mode signals a non-inertial regime, and the prior does not apply.

The empirical limits are set by the labour study.
The holdout count is small, five for OKVED-2 at the limit of yearly data, so the significance tests are underpowered and do not establish an advantage over the naive forecast on any point-accuracy metric.
A systematic study of the reliability metric and its threshold is left to dedicated work, and stronger trend baselines such as damped-trend smoothing and the Theta method are left to future comparison.
The point micro-structure is non-identifiable and the temporal smoother is high-variance.
The rolling-residual intervals remain calibrated.
In the physics instantiation the recovered finite retention $P^{*}=0.924$ signals qualitatively that relaxation has occurred, the strongly inertial yet relaxing regime that the criterion selects.
An independent micro-trajectory estimate (Ulam's method) of the rate gives $\Lambda_{\mathrm{Ulam}}/\Lambda=0.94$.
The aggregate-recovery gap stays non-identifiable (\cref{tab:phys}).

The prior is distinct from two neighbouring approaches.
Entropic optimal-transport inference fixes the same marginals and works on the same feasible set \citep{Schiebinger2019,InverseOT2025}, and it differs in the direction of the prior, inertia with high retention against maximum entropy (\cref{fig:ris1}).
Soft-penalty temporal methods \citep{BrandShare2017} edit the whole operator, so any whole-operator smoother incurs a balance error, and on this panel temporal smoothing corrupts the observed balance by about $12\%$.
The present prior imposes the same rule inertia strictly inside $\ker\Amat$ and leaves the observed balance exact at about $10^{-15}$.
The confirmed value of the prior is deterministic and structural, exact balance preservation while it recovers an interpretable transfer operator.

\section{Conclusion}\label{sec:concl}

The main results of this work are as follows.
\begin{enumerate}
\item The geometry of the stabiliser in the inverse problem of transfer-operator recovery is
part of the a priori model of the system.
The data determine only a manifold of operators of dimension $2n^2+n$, while the pair of
reference and divergence geometry selects a representative on it.

\item We propose a least-action prior on the transfer-operator trajectory, causal and
preserving the observed balance exactly.
Its residual sits at machine zero, $10^{-15}$ to $10^{-14}$, where temporal smoothing, which
mixes the whole operator, incurs about $12\%$ on the labour panel and about $60\%$ in the
physical one.

\item We build an applicability map delimiting the class of systems with an inertial, weakly
forced rule of evolution.
On this map null-space smoothing is trend-independent and harms nowhere.

\item We demonstrate transferability across two qualitatively different instantiations, the
socio-economic and the physical.
The aggregate spectral gap is non-identifiable, sweeping $[0,\,3.2\Lambda]$ at an unchanged
residual, while an independent micro-trajectory estimate (Ulam's method) reproduces the
Landau-Lifshitz relaxation rate $\Lambda$ to within six percent.
On the labour market a rigorous significance analysis delimits the scope.
The confirmed value of the prior is structural and theoretical, and the point forecast shows
no significant gain.
\end{enumerate}

Several directions remain open.
A full scenario apparatus through the control vector $\Rt$ would turn the null-space freedom
into what-if analysis of a known forcing.
An external prior on the structure, from input-output tables, would constrain the
micro-structure that aggregate data leave non-identifiable.
Longer series, from quarterly or regional panels, would support a statistically significant
forecast evaluation and a causal choice of $\lambda$ on real data.

\appendix
\section{Proofs}\label{sec:appA}

\begin{proof}[Proof of \cref{lem:rank}]
Form a square submatrix $C\in\real^{p\times p}$ of $A$ from two groups of columns.
The first group holds the $2n+1$ departure columns, namely $P_{11}^{(i,n+1)}$ for $i=1,\dots,n$ and $P_{21}^{(i,n+1)}$ for $i=0,\dots,n$.
The second group holds the $2n+1$ closure columns $P^*$ and $P^{**}$.
Order the rows as the $2n+1$ balance rows followed by the $2n+1$ normalisation rows.
Then $C$ has the block form
\[
  C=\begin{pmatrix}V&\Theta\\ W&I\end{pmatrix},
\]
where $V$ is diagonal and carries the negative source masses, the entries $-N_1^{(i)}(t)$ for the active departure columns, $-N_2^{(i)}(t)$ for the passive ones and $-N^{(0)}_{\mathrm{eff}}(t)=-\bigl(\Delta N_2^{(0)}(t)+N_2^{(0)}(t)\bigr)$ for the reserve, because each departure column carries a single nonzero in the balance block; the block $\Theta$ vanishes, because the closures do not appear in the balance rows; and $I=I_{2n+1}$, because each closure sits in its own normalisation with coefficient $1$.
The populations are positive, so $V$ is invertible.
The matrix $C$ is block triangular with invertible diagonal blocks, so $\det C=\det V\cdot\det I_{2n+1}\neq0$ and $\rank C=p$.
$C$ is a $p\times p$ submatrix of the $p$-row matrix $A$, so $p\ge\rank A\ge\rank C=p$ and $\rank A=p$.
By the Kronecker-Capelli theorem $\rank(A\mid b)=\rank A$, the system $Ax=b$ is consistent, and its solution set is an affine subspace of dimension $m-\rank A=m-p=2n^2+n$.
\end{proof}

\begin{proof}[Proof of \cref{lem:cone}]
The normalisation rows $i=2n+2,\dots,4n+2$ have entries in $\{0,1\}$, and their supports partition the whole column set $\{1,\dots,m\}$.
Every column, including the closures $P^*,P^{**}$ and the reserve column, enters exactly one normalisation.
Take any $x\ge0$.
The modulus of each normalisation row opens without cancellation, since every entry of $x$ and every coefficient is nonnegative, so
\[
  \norm{Ax}_1\ \ge\ \sum_{i=2n+2}^{4n+2}\Bigl|\sum_j a_{ij}x_j\Bigr|
  \ =\ \sum_{i=2n+2}^{4n+2}\sum_j a_{ij}x_j
  \ =\ \sum_{j=1}^m x_j\ =\ \norm{x}_1,
\]
where the last equality uses that the normalisation supports partition the columns, so each $x_j$ is counted exactly once.
The norm comparisons $\norm{y}_2\ge\norm{y}_1/\sqrt p$ for $y\in\real^p$ and $\norm{x}_1\ge\norm{x}_2$ for $x\in\real^m$ give, for $x\ge0$,
\[
  \norm{Ax}_2\ \ge\ \norm{Ax}_1/\sqrt p\ \ge\ \norm{x}_1/\sqrt p\ \ge\ \norm{x}_2/\sqrt p,
\]
which is \cref{eq:cone} with constant $c=1/\sqrt p$.
In particular $Ax=0$ with $x\ge0$ forces $x=0$, so $\ker A\cap\real^m_+=\{0\}$.
\end{proof}

\begin{proof}[Proof of \cref{lem:kernel}]
We prove the two inclusions and then extract a basis.

A balanced $4$-cycle changes neither the row sums nor the column sums of the coupling, so it satisfies every balance and every normalisation equation and lies in $\ker A$.
This gives the inclusion $\supseteq$.

For the reverse inclusion pass to the flow variables $z^{(i,j)}=N^{(i)}(t)\,P^{(i,j)}$.
The populations are positive, so this is a diagonal change of coordinates and leaves the kernel dimension unchanged.
In these variables the normalisation of source $i$ is the row sum $\sum_j z^{(i,j)}=N^{(i)}(t)$, with the reserve entering through $N^{(0)}_{\mathrm{eff}}(t)$.
Adding the balance equation of target $j$ to the row sum of its own source $j$ turns the pair into the column sum $\sum_i z^{(i,j)}=N^{(j)}(t+1)$, the observed mass of target $j$ at the next instant.
Together these are the vertex-incidence relations of the source-to-target bipartite graph of admissible transitions, in which departure $j=n+1$ is a common sink with no column equation.
The missing equation at departure holds automatically on the kernel: the total of the row sums equals the total of the column sums, each edge being counted once at its source and once at its target, so a flow with zero sums at every source and at every target except departure also sums to zero at departure.
The kernel is therefore the cycle space of the whole graph, the flows with zero sums at every vertex, of dimension
\[
  |\text{edges}|-|\text{vertices}|+|\text{components}|=m-(4n+3)+1=m-p,
\]
consistent with \cref{lem:rank}.

Take the spanning tree centred at departure.
It contains every source-to-departure edge, present for each source and linking all sources to the sink, together with one edge $s_j\to j$ into each remaining target $j$.
The fundamental cycle of a non-tree edge $s\to j$ is the balanced $4$-cycle
\[
  z_{s\to j}-z_{s_j\to j}+z_{s_j\to\mathrm{dep}}-z_{s\to\mathrm{dep}}.
\]
Every fundamental cycle of this tree is a $4$-cycle because every source is adjacent to departure and $s\neq s_j$ for a non-tree edge.
There are $m-p$ non-tree edges, their fundamental cycles are linearly independent, and they span the cycle space, so balanced $4$-cycles span $\ker A$.
The argument uses two properties of the graph only: every source has an edge to departure, and every target has at least one incoming edge.
Any admissible subgraph with these properties is connected, admits the same star-shaped spanning tree, and has all its fundamental cycles balanced $4$-cycles through departure, which proves the extension claimed in the lemma.
The identifiable functionals are the ones constant on the cosets of $\ker A$, the elements of the annihilator $(\ker A)^{\perp}$, and this annihilator is the row space of $A$, the space of marginals.
\end{proof}

\begin{proof}[Proof of \cref{cor:forest}]
Suppose the edge $e$ lies on no balanced $4$-cycle.
The coordinate functional $x\mapsto x_e$ then vanishes on the spanning family of balanced $4$-cycles of \cref{lem:kernel}, hence on $\ker A$, so it lies in the annihilator $(\ker A)^{\perp}$, the row space of $A$.
A functional in the row space is constant on the affine manifold $M$, and its value there is a combination of the marginals, so $x_e$ is identifiable.

Next we show that under the hypotheses of \cref{lem:kernel} the edges on no balanced $4$-cycle are exactly the bridges of the bipartite graph.
Let $e$ lie on some cycle $C$.
The alternating unit flow around $C$ has zero sums at every vertex, so it lies in $\ker A$ and expands over the balanced $4$-cycles that span it.
Its component on $e$ is $\pm1$, so at least one $4$-cycle in the expansion has a nonzero component on $e$, and $e$ lies on that balanced $4$-cycle.
A bridge lies on no cycle, and a balanced $4$-cycle is a cycle, so the two exclusions coincide.

In the full model the reserve target has a single incoming edge, the retention edge $P^{*(0)}$: the blocks $P_{11}$ and $P_{21}$ end in active targets or departure, and $P_{12}$ ends in the passive targets of index $i\ge1$, so no transition edge enters the reserve.
An edge into a vertex of degree one lies on no cycle, so $P^{*(0)}$ is a bridge.
Every other edge lies on an explicit balanced $4$-cycle.
For an edge $s\to j$ with $j$ neither departure nor the reserve target, the target $j$ has a second incoming edge $s'\to j$ with $s'\neq s$: an active target receives from the $n-1$ other active sources, from all $n+1$ passive sources and from its own retention, and a passive target of index $i\ge1$ receives from the active source $i$ and from the passive source $i$.
Both sources have an edge to departure, so $z_{s\to j}-z_{s'\to j}+z_{s'\to\mathrm{dep}}-z_{s\to\mathrm{dep}}$ is a balanced $4$-cycle through $e$.
For an edge $s\to\mathrm{dep}$, pick a target $j$ admissible from $s$ other than departure and the reserve: an active or passive source retains into its own target copy, and the reserve source feeds any active target.
That target has a second incoming edge $s'\to j$ as above, and the four edges $s\to j$, $s'\to j$, $s'\to\mathrm{dep}$, $s\to\mathrm{dep}$ close a balanced $4$-cycle through $e$.
Hence $P^{*(0)}$ is the unique bridge for every $n\ge1$.

The closed form follows from the reserve balance.
\Cref{eq:gen_1_math} with \cref{eq:model_e5} gives
\[
  N_2^{(0)}(t+1)=N_2^{(0)}(t)+\Delta N_2^{(0)}(t)-N^{(0)}_{\mathrm{eff}}(t)\sum_{j=1}^{n+1}P_{21}^{(0,j)}(t),
  \qquad N^{(0)}_{\mathrm{eff}}(t)=\Delta N_2^{(0)}(t)+N_2^{(0)}(t).
\]
The reserve normalisation \cref{eq:model_norm2} turns the sum into $1-P^{*(0)}(t)$, so $N_2^{(0)}(t+1)=N^{(0)}_{\mathrm{eff}}(t)\,P^{*(0)}(t)$, which is \cref{eq:bridge-reserve}; the effective mass is positive by the standing assumption of \cref{sec:nonident}.
\end{proof}

\begin{proof}[Proof of \cref{thm:nonident}]
$F_0$ is the Euclidean norm composed with the affine map $x\mapsto Ax-b$, so it is continuous and convex on $\real^m$.
For coercivity on the cone take $x\ge0$ with $\norm{x}\ge\sqrt p\,\norm{b}$.
Then $\norm{Ax}\ge\norm{x}/\sqrt p\ge\norm{b}$ by \cref{eq:cone}, the triangle inequality gives $\norm{Ax-b}\ge\norm{Ax}-\norm{b}\ge\norm{x}/\sqrt p-\norm{b}\ge0$, and squaring the nonnegative sides gives
\[
  F_0(x)=\norm{Ax-b}^2\ \ge\ \bigl(\norm{x}/\sqrt p-\norm{b}\bigr)^2\ \longrightarrow\ \infty
\]
as $\norm{x}\to\infty$ with $x\ge0$.
The cone $\real^m_+$ is closed, and a continuous functional coercive on a closed set attains its minimum, so $M_0\neq\varnothing$.
$M_0$ is the intersection of a sublevel set of the convex $F_0$ with the convex cone $\real^m_+$, so it is convex; it is closed and bounded, so it is compact.

In the feasible case the minimum of $F_0$ is zero and $M_0=M\cap\real^m_+$.
Its recession cone is $\ker A\cap\real^m_+=\{0\}$ by \cref{eq:cone}, so $M_0$ is a bounded polytope.
Its dimension is at most $\dim M=2n^2+n$.
When $M$ meets the open orthant $\real^m_{>0}$ at a point $x_+$, a relative ball of $M$ around $x_+$ of small enough radius stays in the open orthant, so $M_0$ contains a set of dimension $\dim M$ and the two dimensions are equal.

For the cardinality, suppose $M_0$ contains two distinct points $x_0\neq x_1$.
By convexity it contains the whole segment $\{(1-\theta)x_0+\theta x_1:\theta\in[0,1]\}$, which has the cardinality of the continuum.
A one-point set has cardinality one, so $M_0$ is either a single point or has the cardinality of the continuum.

For the normal solution, the Euclidean norm attains a minimum value $r_0$ on the compact set $M_0$.
The set $M_S=\{x\in M_0:\norm{x}=r_0\}$ is non-empty and convex.
Suppose $x_0\neq x_1$ both lie in $M_S$.
The midpoint $x_{1/2}=\tfrac12(x_0+x_1)$ lies in $M_0$ by convexity, and strict convexity of the squared Euclidean norm \citep{Tikhonov1977} gives $\norm{x_{1/2}}^2<r_0^2$, hence $\norm{x_{1/2}}<r_0$, which contradicts the definition of $r_0$.
Hence $M_S$ is a single point, the unique normal solution.
\end{proof}

\begin{proof}[Proof of \cref{lem:limit}]
The set $M_0$ is non-empty, closed and convex, so the Euclidean projection $x^*$ of $\bar x$ onto $M_0$ exists and is unique by strict convexity.
The point $x^*$ is feasible for \cref{eq:tikh} and satisfies $Ax^*=b$, hence $F_\alpha(x^*)=\alpha\norm{x^*-\bar x}^2$.
Comparing $F_\alpha(x_\alpha)\le F_\alpha(x^*)$ and using that both terms of $F_\alpha$ are non-negative gives the two estimates $\norm{Ax_\alpha-b}^2\le\alpha\norm{x^*-\bar x}^2$ and $\norm{x_\alpha-\bar x}\le\norm{x^*-\bar x}$.
The second estimate confines the family to a fixed ball, so it is bounded.
Take any sequence $\alpha_k\to0$ and any convergent subsequence $x_{\alpha_{k_j}}\to\hat x$.
The cone is closed, so $\hat x\ge0$, and the first estimate drives the residual to zero, so $A\hat x=b$ and $\hat x\in M_0$.
Passing to the limit in the second estimate gives $\norm{\hat x-\bar x}\le\norm{x^*-\bar x}$, and uniqueness of the projection forces $\hat x=x^*$.
A bounded family all of whose accumulation points equal $x^*$ converges to $x^*$.
\end{proof}

\begin{proof}[Proof of \cref{thm:bregman}]
(a) The problem $\min_{Ax=b}\norm{x-\bar x}^2$ minimises a strictly convex objective under affine constraints, so its solution is characterised by Lagrangian stationarity.
Stationarity of $\norm{x-\bar x}^2-2\mu^{\!\top}(Ax-b)$ in $x$ gives $x-\bar x=A^{\!\top}\mu$.
Substituting into $Ax=b$ gives $AA^{\!\top}\mu=b-A\bar x$.
$A$ has full row rank by \cref{lem:rank}, so $AA^{\!\top}$ is invertible and $\mu=(AA^{\!\top})^{-1}(b-A\bar x)$.
Then $x=\bar x+A^{\!\top}\mu=\bar x-A^{\!\top}(AA^{\!\top})^{-1}(A\bar x-b)$, which is \cref{eq:proj}.
This $x$ is the orthogonal projection of $\bar x$ onto $M$, since the correction $x-\bar x$ lies in $\range A^{\!\top}=(\ker A)^{\perp}$.
For $\bar x=0$ it reduces to $x=A^{\!\top}(AA^{\!\top})^{-1}b$, the element of $M$ of least Euclidean norm, the normal solution.

(b) For the Kullback-Leibler divergence $D(x,\bar x)=\sum_j\bigl(x_j\log(x_j/\bar x_j)-x_j+\bar x_j\bigr)$ the value $D(x,\bar x)$ is finite exactly when the support of $x$ lies inside the support of $\bar x$, so the hypothesis $\bar x>0$ makes $D(\cdot,\bar x)$ finite on the whole feasible set.
The projection $\arg\min_{Ax=b,\,x\ge0}D(x,\bar x)$ is an $I$-projection onto the linear family cut out by $Ax=b$.
The feasible set $M\cap\real^m_+$ is compact and convex and $D(\cdot,\bar x)$ is lower semicontinuous and strictly convex, so the $I$-projection exists and is unique \citep{Csiszar1975}.
Csisz\'ar's condition $M\cap\relint\Delta\neq\varnothing$ plays a separate role: it yields the exponential-family form $x_j=\bar x_j\exp\bigl((A^{\!\top}\nu)_j\bigr)$ with finite multipliers $\nu$, while on the boundary the projection can exist with no finite multipliers.
The projection is computed by alternating Bregman projections onto the constraint blocks \citep{BregmanBCCNP2015}, which converge to the unique $I$-projection.
The scaling is weighted: the projection onto the balance constraint of target $j$ multiplies the coordinate of source $i$ by $v_j^{N^{(i)}}$, with the exponent set by the source mass, because the balance rows carry the masses as coefficients.
The diagonal Sinkhorn scaling corresponds to the divergence taken in the flow coordinates $z$, a different objective, since $D(z,\bar z)=\sum_i N^{(i)}D_i(x,\bar x)$ weights the sources by their masses and the divergence is not invariant under the diagonal change of coordinates.
For a uniform reference $\bar x$ the objective reduces to the negative entropy, so the projection is the maximum-entropy operator consistent with the data.
Both parts are the Bregman projection of $\bar x$ onto $M$ for the divergence $D$ \citep{Bregman1967}.
For the inertial reference $\bar x=P_0$ the hypothesis of (b) fails and no $I$-projection exists: the balance rows annihilate the retention columns, so every feasible $x$ with a nonzero increment carries mass outside the support of $P_0$ and $D(x,P_0)=+\infty$ on all of $M$.
\end{proof}


\begin{proof}[Proof of \cref{prop:bb}]
The Benamou-Brenier formulation reads optimal transport as the search for the curve of least kinetic energy between two marginals \citep{BenamouBrenier2000}.
We borrow from it the least-kinetic-energy functional alone, transplanted to the flat Frobenius geometry of the operator space, where no weighting density of the form $\rho\,|v|^2$ remains.
On the space of substochastic matrices with the Frobenius inner product, the kinetic energy of a curve $\tau\mapsto\Pt(\tau)$ is
\[
  \mathcal{E}[\Pt]=\int_0^{T}\norm{\dot\Pt(\tau)}^2\,d\tau .
\]
Sample the curve at the integer times and interpolate it linearly on each unit interval $[t-1,t]$.
The substochastic matrices form a convex set, so a convex combination of $\Pt(t-1)$ and $\Pt(t)$ stays substochastic and the interpolant is admissible.
On $[t-1,t]$ the interpolant has the constant velocity
\[
  \dot\Pt(\tau)=\frac{\Pt(t)-\Pt(t-1)}{\Delta\tau}=\Pt(t)-\Pt(t-1),\qquad \Delta\tau=1 .
\]
The integrand is constant on the interval, so the quadrature over it is exact,
\[
  \int_{t-1}^{t}\norm{\dot\Pt(\tau)}^2\,d\tau=\norm{\Pt(t)-\Pt(t-1)}^2 .
\]
Summing over the intervals gives $\mathcal{E}[\Pt]=\sum_t\norm{\Pt(t)-\Pt(t-1)}^2$, the discrete action of \eqref{eq:action} with $D$ the squared Frobenius distance and $\lambda_d=1$.
The interpolant also minimises the energy among all admissible curves through the same nodes.
For any curve $X$ on $[t-1,t]$ with $X(t-1)=\Pt(t-1)$ and $X(t)=\Pt(t)$, the Cauchy-Schwarz inequality on the unit interval gives
\[
  \int_{t-1}^{t}\norm{\dot X(\tau)}^2\,d\tau\ \ge\ \Bigl\lVert\int_{t-1}^{t}\dot X(\tau)\,d\tau\Bigr\rVert^2=\norm{\Pt(t)-\Pt(t-1)}^2,
\]
with equality for the constant velocity of the linear interpolant.
The dynamic term of the least-action prior is therefore the time-discretisation of the flat analogue of the Benamou-Brenier kinetic energy of the operator path, and its value is the least energy of any admissible path through the sampled operators.
\end{proof}

\begin{proof}[Derivation of \cref{rem:onsager}]
For the diffusion $dX=b(X)\,d\tau+\sqrt{2\varepsilon}\,dW$ the Onsager-Machlup functional of a smooth path on $[0,T]$ is $\int_0^T\bigl[\tfrac{1}{4\varepsilon}\norm{\dot X-b(X)}^2+\tfrac12(\nabla\!\cdot b)(X)\bigr]\,d\tau$ \citep{OnsagerMachlup1953}.
For the Ornstein-Uhlenbeck drift $b(X)=-\gamma(X-P_0)$ the divergence is the constant $-\gamma m$, with $m$ the dimension of the operator vector, so at a fixed horizon the correction shifts the action by a constant.
Expanding the square gives
\[
  \tfrac{1}{4\varepsilon}\norm{\dot X+\gamma(X-P_0)}^2
  =\tfrac{1}{4\varepsilon}\Bigl(\norm{\dot X}^2
  +\gamma\,\tfrac{d}{d\tau}\norm{X-P_0}^2
  +\gamma^2\norm{X-P_0}^2\Bigr),
\]
and the middle term integrates to the boundary value $\tfrac{\gamma}{4\varepsilon}\bigl(\norm{X(T)-P_0}^2-\norm{X(0)-P_0}^2\bigr)$, which involves the end operators alone.
The interior part of the action is therefore
\[
  \mathcal{S}[X]=\frac{1}{4\varepsilon}\int_0^T\bigl(\norm{\dot X}^2+\gamma^2\norm{X-P_0}^2\bigr)\,d\tau .
\]
The cross term carries the whole time asymmetry of the drift: once it moves to the boundary, the interior action is invariant under the reversal $\tau\mapsto T-\tau$, in line with the reversibility of the stationary Ornstein-Uhlenbeck process, and no directed drift term survives in the objective.
Sample the path on the grid of step $h$ and apply the rectangle rule with the difference quotient $\dot X\approx\bigl(\Pt(t)-\Pt(t-1)\bigr)/h$:
\[
  \mathcal{S}\ \approx\ \sum_t\Bigl[\frac{1}{4\varepsilon h}\norm{\Pt(t)-\Pt(t-1)}^2
  +\frac{\gamma^2h}{4\varepsilon}\norm{\Pt(t)-P_0}^2\Bigr],
\]
the two penalty terms of \eqref{eq:action} with $\lambda_d=1/(4\varepsilon h)$ and $\lambda_s=\gamma^2h/(4\varepsilon)$.
At $h=1$ the choice $\gamma^2=\lambda_s/\lambda_d$ and $\varepsilon=1/(4\lambda_d)$ realises every pair $\lambda_d,\lambda_s>0$.
In the discrete weights the boundary value of the middle term equals $\sqrt{\lambda_d\lambda_s}\,\bigl(\norm{\Pt(T)-P_0}^2-\norm{\Pt(0)-P_0}^2\bigr)$, where $\Pt(0)$ and $\Pt(T)$ are the first and last operators of the trajectory and $P_0$ is the fixed full-retention vector; the boundary value enters only the stationarity conditions at $t=0$ and $t=T$ and leaves the interior three-point recursion of \cref{rem:el} unchanged.
The quadrature reproduces the integral to leading order in $h$, and the exact one-step transition density of the reference departs from the separable two-term form at order $\gamma h$, so the identification holds at the level of the discretised action.
The discrete sum itself is a positive-definite quadratic form in the trajectory, hence, up to an additive constant, the negative logarithm of a Gaussian density, and its minimiser under the balance constraints is the mode of the conditioned Gaussian law; truncation to the cone $\Pt\ge0$ preserves the constrained minimiser, since restricting a log-concave density to a convex set does not move its constrained mode.
\end{proof}

\begin{proof}[Proof of \cref{prop:transport}]
Pass to the flow variables $z^{(i,j)}=N^{(i)}(t)\,\Pt^{(i,j)}(t)$ used for \cref{lem:kernel}, with the reserve entering through $N^{(0)}_{\mathrm{eff}}(t)$.
The masses are positive, so $\Pt\mapsto z$ is a diagonal change of coordinates, an affine isomorphism that carries the feasible set of one layer onto its image without altering its face structure.
In these coordinates the normalisation of source $i$ is the row-sum constraint $\sum_j z^{(i,j)}=N^{(i)}(t)$, and the balance of target $j$ combined with the normalisation of source $j$ as in \cref{lem:kernel} is the column-sum constraint $\sum_i z^{(i,j)}=N^{(j)}(t+1)$.
Departure $j=n+1$ carries no balance equation, so its column has no fixed sum and acts as a slack sink that absorbs the mass leaving the modelled clusters.
The set $\{z\ge 0:\ \text{row sums }N^{(i)}(t),\ \text{column sums }N^{(j)}(t+1),\ \text{free sink}\}$ is a transportation polytope with the observed marginals and one slack column, restricted to the admissible edges: the columns of the passive targets hold two admissible entries each, the reserve column one, and the diagonal change of coordinates leaves these structural zeros in place.
This proves the affine isomorphism with the stated qualifications.

The minimum-norm objective transforms without isometry.
For positive masses $\norm{\Pt}^2=\sum_{i,j}(z^{(i,j)})^2/(N^{(i)})^2$ identically, so the minimum-norm reference minimises the weighted quadratic form with source weights $1/(N^{(i)})^2$ on the polytope.
The Karush-Kuhn-Tucker conditions for this problem, with $\alpha_i$ the row multipliers, $\beta_j$ the column multipliers and $\beta_{n+1}=0$ at the slack sink, give $z^{(i,j)}=\max\bigl(a_i+(N^{(i)})^2 b_j,\,0\bigr)$ with $a_i=N^{(i)}\alpha_i/2$ and $b_j=\beta_j/2$ on the admissible edges.
The standard quadratically regularised plan minimises $\norm{z}^2$ on the same polytope and has the additive clipped potentials $z^{(i,j)}=\max(a_i+b_j,0)$ \citep{PeyreCuturi2019}.
When the source masses are equal, $N^{(i)}\equiv N$, the two objectives are proportional and the two plans coincide; for unequal masses the potential forms differ and so, in general, do the plans.

On the outflow simplex of a single source only the row-sum constraint acts.
The Cauchy-Schwarz inequality gives $\norm{\Pt^{(i,\cdot)}}^2\ge1/k$ over the $k$ admissible targets, with equality at the uniform outflow, and the Shannon entropy is maximised at the same point, so there the minimum-norm reference and the maximum-entropy plan coincide.
This coincidence is a property of the bare simplex and does not survive the coupling that the column constraints induce.
\end{proof}

\begin{proof}[Proof of \cref{lem:prox}]
The objective $(1-\lambda)\norm{x-\Pt(t)}^2+\lambda\norm{x-\Pt_s(t-1)}^2$ expands to $\norm{x-y}^2$ plus a constant, with $y=(1-\lambda)\Pt(t)+\lambda\,\Pt_s(t-1)$, so its unconstrained minimiser is $y$, which is temporal smoothing.
Rearranged, the recursion reads $\Pt_s(t)-\Pt_s(t-1)=(1-\lambda)\bigl(\Pt(t)-\Pt_s(t-1)\bigr)$, one explicit Euler step of size $h=1-\lambda$ of the relaxation
\[
  \dot X(\tau)=-\bigl(X(\tau)-\Pt(\tau)\bigr),
\]
the gradient flow of the quadratic potential $\tfrac12\norm{X-\Pt}^2$.
Under the constraint $\Amat(t)x=\Amat(t)\Pt(t)$ the same objective is minimised by the orthogonal projection of $y$ onto the fiber,
\[
  y-\Amat^{\!\top}(\Amat\Amat^{\!\top})^{-1}\bigl(\Amat y-\Amat\Pt(t)\bigr)
  =\Pt(t)+\lambda\bigl(I-\Amat^{\!\top}(\Amat\Amat^{\!\top})^{-1}\Amat\bigr)\bigl(\Pt_s(t-1)-\Pt(t)\bigr),
\]
which is the causal projection step realised by the recursion \eqref{eq:recursion} of \cref{subsec:nullspace}.
\end{proof}

\begin{proof}[Proof of \cref{lem:mass}]
$T$ is non-singular with respect to $m$, so its Perron-Frobenius transfer operator $L$ is defined by $\int_X (Lf)\,g\,dm=\int_X f\,(g\circ T)\,dm$ for all bounded measurable $g$.
Take $g\equiv 1$.
Then $\int_X Lf\,dm=\int_X f\,dm$, so $L$ conserves the mass of every density, which is the first claim \citep{Ulam1960,DellnitzJunge1999,Froyland2014}.

The cells partition the modelled domain $D$ and have positive mass, so Ulam's entries are defined, and the entry from source cell $i$ to target cell $j$ is the fraction of the mass of $B_i$ carried into $B_j$,
\[
  Q_{ji}=\frac{m\bigl(B_i\cap T^{-1}B_j\bigr)}{m(B_i)} .
\]
Column $i$ collects the destinations of $B_i$, so
\[
  \sum_j Q_{ji}=\frac{m\bigl(B_i\cap T^{-1}(\textstyle\bigcup_j B_j)\bigr)}{m(B_i)}=\frac{m\bigl(B_i\cap T^{-1}D\bigr)}{m(B_i)}\ \le\ 1 .
\]
The column sum is one exactly when $m$-almost all of $B_i$ lands in $D$ under $T$, and the deficit $1-\sum_j Q_{ji}$ is the share of $B_i$ that $T$ carries into $X\setminus D$, the leakage into the departure sink.
The paper keeps this orientation, sources down the columns, and treats departure as the only defect from column-stochasticity: on the cells that leak nothing, $\mathbf{1}^{\!\top}Q=\mathbf{1}^{\!\top}$.
\end{proof}

\begin{proof}[Proof of \cref{thm:coarse}]
In the physical instantiation of \cref{sec:physics} the ensemble follows the deterministic rapidity flow, so over a fixed lag $\Delta t$ the density is pushed forward by the transfer operator $P_{\Delta t}$ of the time-$\Delta t$ flow map $T$, a mass-conserving operator by \cref{lem:mass}; for a stochastic reference dynamics $P_{\Delta t}$ is the corresponding Markov transfer operator with the same mass conservation.

Project $P_{\Delta t}$ onto the cell indicators in the mass coordinates.
The projection sends a density $f$ to $\sum_j\bigl(\langle\mathbf{1}_{B_j},f\rangle_m/m(B_j)\bigr)\mathbf{1}_{B_j}$, and the coordinates of $\mathbf{1}_{B_i}/m(B_i)$ are the cell masses.
In these coordinates the matrix of the projected operator is Ulam's matrix,
\[
  Q_{ji}=\frac{\langle\mathbf{1}_{B_j},\,P_{\Delta t}\mathbf{1}_{B_i}\rangle_m}{\langle\mathbf{1}_{B_i},\mathbf{1}_{B_i}\rangle_m},
\]
the fraction of the mass initialised uniformly in cell $i$ that the flow carries into cell $j$ over the lag \citep{Ulam1960,Froyland2014}, and $Q$ is substochastic by \cref{lem:mass}.

Write the coarse density as the vector of cell masses $N_j(t)=\int_{B_j}\rho(\cdot,t)\,dm$, and read the transition fractions as the operator indicators $\Pt^{(i,j)}(t)=Q_{ji}$.
The mass balance of cell $j$ over one lag equates the increment of its mass to the net transported flow, and the outflow of each source cell closes to one through its retention share, with the departure sink carrying no balance equation of its own.
Stacking the per-cell balance rows and the per-source normalisation rows gives the linear system
\[
  \Amat(t)\,\Pt(t)=\ncal(t),\qquad \Pt(t)\ge 0,
\]
with $\Amat(t)$ set by the observed cell masses and $\ncal(t)$ by the increments $N(t+1)-N(t)$, the generic balance-plus-normalisation form of \cref{sec:vector}.
The kernel identification of \cref{lem:kernel} uses only this form, so $\ker\Amat$ is spanned by balanced $4$-cycles whenever every source cell can leak into departure and every target cell is reachable.
Recovering $Q$ from two coarse densities is the same segmentation problem, under-determined in the same way, since two coarse densities fix only the marginals of $Q$ and leave its internal routing on the solution manifold \citep{BiSarrazinSchmitzer2026,IoannouKlusDosReis2025}.
The split of the cells into active and passive clusters, the reserve and the two retention kinds is the modelling convention of \cref{sec:physics} and enters no step of this proof.
\end{proof}

\begin{proof}[Proof of \cref{lem:neutral}]
(i) For $\delta\in\ker\Amat(t)$ the update $\Pt(t)\mapsto\Pt(t)+\delta$ gives $\Amat(t)(\Pt(t)+\delta)=\Amat(t)\Pt(t)+\Amat(t)\delta=\Amat(t)\Pt(t)$, so the macro forecast is invariant.

(ii) $\Amat$ has full row rank by \cref{lem:rank}, so $\Amat\Amat^{\!\top}$ is invertible and the projection $\Pi_{M(t)}(x)=x-\Amat^{\!\top}(\Amat\Amat^{\!\top})^{-1}(\Amat x-b)$ of \cref{thm:bregman} is defined.
Applying $\Amat$ gives
\[
  \Amat\,\Pi_{M(t)}(x)=\Amat x-\Amat\Amat^{\!\top}(\Amat\Amat^{\!\top})^{-1}(\Amat x-b)=\Amat x-(\Amat x-b)=b,
\]
so the image lies in the fiber $M(t)$ and the balance holds exactly.

(iii) Let $\delta\in\ker\Amat(t-1)$ and set $\Delta A=\Amat(t)-\Amat(t-1)$.
Then $\Amat(t)\delta=\bigl(\Amat(t-1)+\Delta A\bigr)\delta=\Delta A\,\delta$, since $\Amat(t-1)\delta=0$.
The operator-norm inequality gives $\norm{\Amat(t)\delta}=\norm{\Delta A\,\delta}\le\norm{\Delta A}\,\norm{\delta}$.
The leakage into the forecast is governed by the drift $\Delta A$ of the system matrix and vanishes when $\Amat$ is time-invariant.
\end{proof}

\section{Full block structure of the balance matrix}\label{sec:appB}

This appendix records the explicit block form of the matrix $\Amat(t)$ whose size and role are summarised in \cref{sec:vector}.
The notation follows \cref{sec:model,sec:vector}.
Here $E_k$ is the $k\times k$ identity, $\Theta_{k_1,k_2}$ is the $k_1\times k_2$ zero matrix, and $\diag(\cdot)$ is a diagonal matrix.
A block written $A_{l_1,l_2}^{(\cdot)}$ has $l_1$ rows and $l_2$ columns.

\subsection{The observation vector}
The observation vector $\ncal(t,t+1)\in\real^{p}$, $p=4n+2$, stacks the balance increments over $(t,t+1)$ in its first $2n+1$ rows and a block of $2n+1$ ones in the remaining rows,
\begin{equation}\label{eq:appB-ncal}
\ncal(t,t+1)=\bigl(\Delta_1^1,\ldots,\Delta_1^n,\;\Delta_2^0,\;\Delta_2^1,\ldots,\Delta_2^n,\;\underbrace{1,\ldots,1}_{2n+1}\bigr)^{\mathrm{T}},
\end{equation}
with the increments
\[
\Delta_1^i=N_1^{(i)}(t+1)-N_1^{(i)}(t),\qquad
\Delta_2^i=N_2^{(i)}(t+1)-N_2^{(i)}(t),\qquad i=\overline{1,n},
\]
\[
\Delta_2^0=N_2^{(0)}(t+1)-N_2^{(0)}(t)-\Delta N_2^{(0)}(t).
\]
The reservoir increment $\Delta_2^0$ subtracts the exogenous inflow $\Delta N_2^{(0)}(t)$, so the balance rows measure the internal exchange alone.
The trailing ones are the right-hand sides of the $2n+1$ normalisations of \cref{sec:model}.

\subsection{The indicator vector}
The indicator vector $\Pt(t)\in\real^{m}$, $m=2n^2+5n+2$, lists the transition indicators in the canonical order $P_{11}\to P_{12}\to P_{21}\to P^{*}\to P^{**}$,
\begin{equation}\label{eq:appB-P}
\Pt(t)=\bigl(P_{11}(t),\;P_{12}(t),\;P_{21}(t),\;P^{*}(t),\;P^{**}(t)\bigr)^{\mathrm{T}}.
\end{equation}
The transition blocks expand as
\[
P_{11}(t)=\bigl(P_{11}^{(1)}(t),\ldots,P_{11}^{(n)}(t)\bigr),
\]
\[
P_{11}^{(i)}(t)=\bigl(P_{11}^{(i,1)}(t),\ldots,P_{11}^{(i,i-1)}(t),\;P_{11}^{(i,i+1)}(t),\ldots,P_{11}^{(i,n+1)}(t)\bigr),\qquad i=\overline{1,n},
\]
\[
P_{12}(t)=\bigl(P_{12}^{(1)}(t),\ldots,P_{12}^{(n)}(t)\bigr),
\]
\[
P_{21}(t)=\bigl(P_{21}^{(0)}(t),\;P_{21}^{(1)}(t),\ldots,P_{21}^{(n)}(t)\bigr),\qquad
P_{21}^{(i)}(t)=\bigl(P_{21}^{(i,1)}(t),\ldots,P_{21}^{(i,n+1)}(t)\bigr),\qquad i=\overline{0,n}.
\]
The retention blocks are
\[
P^{*}(t)=\bigl(P^{*(0)}(t),\;P^{*(1)}(t),\ldots,P^{*(n)}(t)\bigr),\qquad
P^{**}(t)=\bigl(P^{**(1)}(t),\ldots,P^{**(n)}(t)\bigr).
\]
The blocks $P_{11}$, $P_{12}$, $P_{21}$ carry $n^2$, $n$ and $(n+1)^2$ components, and the retention blocks $P^{*}$, $P^{**}$ carry $n+1$ and $n$ components, which gives $m=2n^2+5n+2$.

\subsection{The block form of the matrix}
The columns of $\Amat(t)$ follow the block order of $\Pt(t)$.
The first $2n+1$ rows form the balance block $B(t)$ and the remaining $2n+1$ rows form the normalisation block.
The retention indicators $P^{*}$ and $P^{**}$ cancel in the increments, so their columns vanish in $B(t)$.
They carry coefficient one in the normalisation block, where each normalisation of \cref{sec:model} closes the outgoing budget of one source to unity.
The balance block acts on the transition columns and has the $2\times 3$ super-block form
\begin{equation}\label{eq:appB-super}
B(t)=
\begin{pmatrix}
A_{n,\,n^2}^{(1)} & A_{n,\,n}^{(2)} & A_{n,\,(n+1)^2}^{(3)}\\[3pt]
\Theta_{n+1,\,n^2} & A_{n+1,\,n}^{(4)} & A_{n+1,\,(n+1)^2}^{(5)}
\end{pmatrix}.
\end{equation}
The column groups carry the $n^2$ indicators of $P_{11}$, the $n$ indicators of $P_{12}$ and the $(n+1)^2$ indicators of $P_{21}$.
The two row groups carry the $n$ active balance equations and the $n+1$ passive balance equations.
The zero block $\Theta_{n+1,\,n^2}$ records that moves inside the active clusters leave the passive balance equations untouched.

The block $A_{n,\,n^2}^{(1)}$ collects the coefficients of $P_{11}$ and splits into $n$ blocks of equal size,
\[
A_{n,\,n^2}^{(1)}=\bigl(A_{n,\,n}^{(1,1)},\,A_{n,\,n}^{(1,2)},\ldots,A_{n,\,n}^{(1,n)}\bigr),
\]
each of which stacks three sub-blocks,
\[
A_{n,\,n}^{(1,i)}=
\begin{pmatrix}
A_{i-1,\,n}^{(1,i,1)}\\[2pt]
A_{1,\,n}^{(1,i,2)}\\[2pt]
A_{n-i,\,n}^{(1,i,3)}
\end{pmatrix},\qquad i=\overline{1,n},
\]
with
\[
A_{i-1,\,n}^{(1,i,1)}=\bigl(N_1^{(i)}(t)\,E_{i-1},\;\Theta_{i-1,\,n-i+1}\bigr),
\]
\[
A_{1,\,n}^{(1,i,2)}=\bigl(\underbrace{-N_1^{(i)}(t),\,-N_1^{(i)}(t),\ldots,-N_1^{(i)}(t)}_{n}\bigr),
\]
\[
A_{n-i,\,n}^{(1,i,3)}=\bigl(\Theta_{n-i,\,i-1},\;N_1^{(i)}(t)\,E_{n-i},\;\Theta_{n-i,\,1}\bigr),\qquad i=\overline{1,n}.
\]

The block $A_{n,\,n}^{(2)}$ collects the coefficients of $P_{12}$ in the active balance equations,
\[
A_{n,\,n}^{(2)}=-\diag\bigl(N_1^{(1)}(t),\,N_1^{(2)}(t),\ldots,N_1^{(n)}(t)\bigr).
\]

The block $A_{n,\,(n+1)^2}^{(3)}$ collects the coefficients of $P_{21}$ in the active balance equations and splits into $n+1$ blocks,
\[
A_{n,\,(n+1)^2}^{(3)}=\bigl(A_{n,\,n+1}^{(3,0)},\,A_{n,\,n+1}^{(3,1)},\ldots,A_{n,\,n+1}^{(3,n)}\bigr),
\]
with
\[
A_{n,\,n+1}^{(3,0)}=\Bigl(\bigl(\Delta N_2^{(0)}(t)+N_2^{(0)}(t)\bigr)\,E_n,\;\Theta_{n,\,1}\Bigr),
\]
\[
A_{n,\,n+1}^{(3,i)}=\bigl(N_2^{(i)}(t)\,E_n,\;\Theta_{n,\,1}\bigr),\qquad i=\overline{1,n}.
\]
The leading block $A_{n,\,n+1}^{(3,0)}$ carries the reserve cluster, whose effective mass is the sum of the standing reserve $N_2^{(0)}(t)$ and the exogenous inflow $\Delta N_2^{(0)}(t)$.

The block $A_{n+1,\,n}^{(4)}$ collects the coefficients of $P_{12}$ in the passive balance equations,
\[
A_{n+1,\,n}^{(4)}=
\begin{pmatrix}
\Theta_{1,\,n}\\[2pt]
-A_{n,\,n}^{(2)}
\end{pmatrix}
=
\begin{pmatrix}
\Theta_{1,\,n}\\[2pt]
\diag\bigl(N_1^{(1)}(t),\ldots,N_1^{(n)}(t)\bigr)
\end{pmatrix}.
\]

The block $A_{n+1,\,(n+1)^2}^{(5)}$ collects the coefficients of $P_{21}$ in the passive balance equations and splits into $n+1$ blocks,
\[
A_{n+1,\,(n+1)^2}^{(5)}=\bigl(A_{n+1,\,n+1}^{(5,0)},\,A_{n+1,\,n+1}^{(5,1)},\ldots,A_{n+1,\,n+1}^{(5,n)}\bigr),
\]
each carrying a single nonzero row placed at the equation of its own cluster,
\[
A_{n+1,\,n+1}^{(5,i)}=
\begin{pmatrix}
\Theta_{i,\,n+1}\\[2pt]
\tilde A_{1,\,n+1}^{(5,i)}\\[2pt]
\Theta_{n-i,\,n+1}
\end{pmatrix},\qquad i=\overline{0,n},
\]
with
\[
\tilde A_{1,\,n+1}^{(5,0)}=\Bigl(\underbrace{-\bigl(\Delta N_2^{(0)}(t)+N_2^{(0)}(t)\bigr),\ldots,-\bigl(\Delta N_2^{(0)}(t)+N_2^{(0)}(t)\bigr)}_{n+1}\Bigr),
\]
\[
\tilde A_{1,\,n+1}^{(5,i)}=\bigl(\underbrace{-N_2^{(i)}(t),\,-N_2^{(i)}(t),\ldots,-N_2^{(i)}(t)}_{n+1}\bigr),\qquad i=\overline{1,n}.
\]

The departure columns $P_{11}^{(i,n+1)}$ and $P_{21}^{(i,n+1)}$ each keep a single nonzero entry in $B(t)$.
These entries give the balance block full row rank, as \cref{sec:nonident} uses.

\section{Concept briefs}\label{sec:appC}

This appendix recalls the four constructions that \cref{sec:action} draws on, at the level a
reader from another field needs.

\subsection{Optimal transport}\label{subsec:brief-ot}
Optimal transport moves one distribution onto another at least cost.
The Monge-Kantorovich problem seeks a coupling of two marginals that minimises an average
ground cost \citep{Villani2009,PeyreCuturi2019}.
For a quadratic cost the Benamou-Brenier form recasts this as a dynamic problem: among all
paths of distributions that join the two marginals, pick the one of least kinetic energy
\citep{BenamouBrenier2000}.
Adding an entropy term to the coupling gives entropic optimal transport, solved by the
Sinkhorn iterations, with the entropy weight interpolating between the least-cost coupling and
the maximum-entropy coupling \citep{Cuturi2013}.
The paper draws on that interpolation.

\subsection{The Schr\"odinger bridge}\label{subsec:brief-bridge}
The Schr\"odinger bridge asks for the most likely evolution between two observed marginals
under a reference stochastic dynamics, the law closest in relative entropy to the reference
among all laws with the given endpoints \citep{Leonard2014}.
It is the entropic regularisation of dynamic optimal transport, and a stochastic-control
reading connects the two through a Fisher-information term
\citep{ChenGeorgiouPavon2016,ChenGeorgiouPavon2021}.
The strong-friction limit of the bridge reduces the second-order dynamics to a first-order
relaxation.
\Cref{rem:bridge} relates that relaxation to the causal smoothing flow of \cref{lem:prox} and
records where the correspondence stops.

\subsection{The Perron-Frobenius operator and Ulam's method}\label{subsec:brief-pf}
The Perron-Frobenius operator pushes a density forward under a dynamical map and conserves
total mass by construction.
Ulam's method approximates it by a finite matrix: partition the state space into cells and
record the fraction of each cell that the map carries into every other cell
\citep{Ulam1960,DellnitzJunge1999}.
On a partition of the whole space the matrix is stochastic; on a modelled domain that the map
can leave it is substochastic, with the column deficit of a cell equal to the mass carried out
of the domain (\cref{lem:mass}).
Its leading eigenstructure approximates the invariant density and the slow modes of the
dynamics \citep{Froyland2014}.
The paper uses this matrix as the transfer operator of the phase-space cells.

\subsection{The Onsager-Machlup action}\label{subsec:brief-om}
The Onsager-Machlup action assigns to a trajectory of a fluctuating system a cost whose
minimiser is the most probable path \citep{OnsagerMachlup1953}.
For a gradient dynamics the action integrates the squared deviation from the deterministic
drift, and at a fixed horizon its interior part is the integral of kinetic energy plus
potential energy.
The discrete action on the operator trajectory in \cref{sec:action} is the time-discretisation
of this interior part for an Ornstein-Uhlenbeck reference drawn toward the full-retention
vector (\cref{rem:onsager}).

\section{Reproducibility}\label{sec:appD}

The code and data are archived on Zenodo \citep{NevecheryaCode2026}, under the concept
identifier 10.5281/zenodo.21140416, which always resolves to the latest release, and the
version identifier 10.5281/zenodo.21797159 for the release used here, version 1.3.1.
The archive is self-contained on numpy, scipy and matplotlib.
It holds the segmentation solver, the forecast pipeline, the physics simulation, the
labour-market inputs as plain JSON, the synthetic generators, the significance tests, the
applicability-map grids, and the figure generators, and every run in it is deterministic given
the fixed random seeds.
The solver settings, the regularisation weight, the convergence tolerance, the iteration count,
and the library versions are recorded there.
The labour-market panel is the curated OKVED-2 series for 2017 to 2024, and the rolling-origin
protocol gives the five holdouts 2020 to 2024.
The physics ensemble is integrated from the rapidity equation of \cref{sec:physics},
partitioned into the eight cells described there.

\bibliographystyle{plainnat}
\bibliography{references_arxiv}

\end{document}